\documentclass[12pt]{amsart}
\usepackage{amsmath, amsthm, amssymb}
\usepackage{fullpage}
\usepackage{lmodern,anyfontsize,fix-cm}
\usepackage{hyperref}
\usepackage{enumitem}
\usepackage{xcolor}
\usepackage[normalem]{ulem}
\usepackage[makeroom]{cancel}
\usepackage[bigdelims,vvarbb]{newpxmath} % bb from STIX
\usepackage[bottom=.96in,top=.96in,left=1.25in,right=1.25in]{geometry}
\usepackage{savesym}
\let\savedegree\bigtimes
\let\bigtimes\relax
\usepackage{mathabx}
\let\bigtimes\savedegree
\usepackage{mathtools}
\mathtoolsset{showonlyrefs}
\usepackage[backgroundcolor=gray!60,linecolor=black, tickmarkheight=.5em,textwidth=0.6\marginparwidth ]{todonotes}

\newcommand{\Hilbert}{H}
\newcommand{\Projection}{P}

\newcommand{\la}{\langle}
\newcommand{\ra}{\rangle}
\newcommand{\sgn}{\operatorname{sgn}}
\newcommand{\bds}{\boldsymbol}
\newcommand{\PV}{\operatorname{PV}}

\allowdisplaybreaks

\newcommand{\mD}{{\mathcal{D}}}

\newcommand{\qH}{{\tilde H}}
\newcommand{\tu}{{\tilde u}}
\newcommand{\hw}{{\hat w}}
\newcommand{\hr}{{\hat r}}

\newcommand{\balpha}{{\boldsymbol \alpha}}

\newcommand{\IGNORE}[1]{}
\newcommand{\ignore}[1]{}

\newcommand{\mbb}[1]{\mathbb{#1}}
\newcommand{\mb}[1]{\mathbf{#1}}

\newcommand{\bfj}{{\boldsymbol{j}}}
\newcommand{\bfk}{{\boldsymbol{k}}}

\newtheorem{theorem}{Theorem}
\newtheorem{lemma}{Lemma}

\newtheorem{definition}[lemma]{Definition}
\newtheorem{remark}[lemma]{Remark}

\numberwithin{lemma}{section}

\newcommand{\E}{E}

\numberwithin{equation}{section}

\newcommand{\R}{{\mathbb R}}

\newcommand{\Z}{{\mathbb Z}}
\newcommand{\T}{{\mathbb T}}

\renewcommand{\H}{{\mathcal H }}

\newcommand{\W}{{\mathbf W}}

\newcommand{\lin}{\text{lin}}

\begin{document}

\title{Local well-posedness of spatially quasiperiodic gravity water waves in two dimensions}

\author{Mihaela Ifrim}
\address{Department of Mathematics, 
  University of Wisconsin–Madison, 
  Madison, WI 53706, USA.}
  \email{ifrim@wisc.edu}

\author{Jon Wilkening}
\address{ Department of Mathematics, 
  University of California, 
  Berkeley, CA 94720, USA.}
  \email{wilkening@berkeley.edu}

\author{Xinyu Zhao}
\address{
  Department of Mathematical Sciences, 
  New Jersey Institute of Technology, 
  Newark, NJ 07102, USA.
  }
  \email{xinyu.zhao@njit.edu}
%%%%%%%%%%%%%%%%%%%%%%%%%%%%%%%%%%%%%%%%%%%%%%%%%%%%%%%%%%%%%

%%%%%%%%%%%%%%%%%%%%%%%%%%%%%%%%%%%%%%%%%%%%%%%%%%%%%%%%%%%

\begin{abstract}
We provide the first proof of local well-posedness for the
two-dimension\-al gravity water wave equations with spatially
quasi-periodic initial conditions. We represent the solution using
holomorphic coordinates, which are equivalent to a conformal mapping
formulation of the equations of motion. This allows us to compute the
Dirichlet-Neumann operator via the Hilbert transform, which has a
simple form in the spatially quasiperiodic setting.  We use a 
Littlewood–Paley decomposition  adapted to  the quasiperiodic  setting  and
establish multiplicative and commutator estimates in this
framework. The key step of the proof is the derivation of quasilinear
energy estimates for the linearized water wave equations with
quasiperiodic initial data.
\end{abstract}

\maketitle

\setcounter{tocdepth}{1}
\tableofcontents
%%%%%%%%%%%%%%%%%%%%%%%%%%%%%%%%%%%%%%%%%%%%%%%%%%%%%%%%%%%

\section{Introduction}

We consider the two–dimensional water wave problem of infinite depth with gravity with spatially quasi-periodic initial data. The time evolution is governed by the incompressible Euler equations with a free boundary. Under the assumption of irrotational flow, the system reduces to a one–dimensional time evolution of the position of the free surface coupled to the trace of the velocity potential on this moving interface. This nonlocal formulation in which a Dirichlet-Neumann map obviates the need to explicitly solve partial differential equations throughout the bulk fluid has been the starting point of a vast literature addressing questions such as local and global well-posedness, low-regularity theory, long-time behavior, singularity formation, wave breaking, and the construction of special classes of solutions such as traveling and standing waves. There is an extensive body of work on these topics, but we mention here only those contributions most closely related to the present goal of establishing local well-posedness of spatially quasi-periodic water waves.

A major conceptual advance in the analysis of water waves was achieved through the quasilinear system formulation introduced by Hunter, Ifrim, and Tataru \cite{HIT}, where the equations are recast using a nonlinear change of variables that reveals their underlying dispersive and transport structure. This is achieved using holomorphic coordinates via a conformal map.
This idea was first introduced in \cite{ov}, and later in \cite{zakharov2, zakharov} using a different setup.
However, the formulation in \cite{HIT} is different from other works and an alternative derivation for the evolution equations is given there.  It also expresses the normal derivative of the pressure on the boundary in 
a clean manner. This framework allows one to treat the problem as a genuinely quasilinear evolution, enabling robust energy estimates and refined structural analysis even at low regularity. Subsequent developments along these lines have led to significant progress on long-time existence, modified scattering, and the understanding of nonlinear resonances, and now form a central paradigm in the modern theory of water waves \cite{HIT, IT-global,MR3667289, AIT, MR4483135}.

In parallel, a different line of investigation has focused on the existence of special solutions such as standing waves \cite{schwartz1981semi,vandenBroeck:84,mercer:92,plotnikov01,iooss05,alazard15}, traveling-standing waves \cite{waterTS}, and more general time-quasiperiodic water waves \cite{berti2016quasi, Berti:2020:standing, Berti:2021:trav, feola2024trav}. More recently, the water wave problem has  been formulated in the spatially quasiperiodic setting and numerical methods have been developed to compute quasiperiodic traveling gravity-capillary waves \cite{quasi:trav} and pure gravity waves \cite{quasi:bif} as well as solutions of the initial value problem in infinite depth \cite{quasi:ivp} and finite depth \cite{quasi:finite}, including solutions with non-repeating patterns of overturning waves. These studies provide compelling numerical support for the existence of spatially quasiperiodic dynamics in the water wave system. However, to the best of our knowledge, no prior work has provided a proof of existence of such solutions in the fully nonlinear setting.

The purpose of the present work is to initiate a rigorous analytical study of the existence of quasiperiodic solutions for the two-dimensional gravity water wave problem in infinite depth. In subsequent work, we also plan to address the gravity–capillary water wave system. Our approach is inspired by the quasilinear perspective developed in \cite{HIT}, in the sense that we rely on a formulation that isolates the leading-order transport and dispersive mechanisms and allows for a precise control of nonlinear interactions. At the same time, the quasiperiodic nature of the solutions introduces substantial new difficulties; in particular, they possess neither translation symmetry nor decay at infinity. 
Overcoming these obstacles requires combining ideas from quasilinear dispersive analysis with tools adapted to quasiperiodic structures. These constitute the main contribution of the present work.

\subsection{Prior work.} The study of spatially quasiperiodic solutions of nonlinear dispersive PDE's has remained limited in comparison to solutions that decay at infinity or evolve on a periodic domain. In the quasiperiodic setting, solutions depend on the spatial variable through finitely many incommensurate frequencies, which leads to substantial analytical difficulties, including the lack of spatial compactness, the presence of small divisors in asymptotic expansions or Fourier representations, and the absence of a canonical Fourier basis adapted to the nonlinear dynamics.

In oceanography, modulational instabilities are believed to contribute
to the formation of rogue waves \cite{osborne2000,
  janssen2003nonlinear, onorato2006modulational,
  ablowitz2015interacting}. The nonlinear Schr\"odinger equation is
typically used to approximate the nonlinear dynamics
\cite{benney:newell:67, zak} and the growth of unstable modes is
governed by the Benjamin-Feir instability \cite{benj:feir:67}. However, for
larger-amplitude waves, weakly nonlinear theory is not an accurate
water wave model. The spectral stability of large-amplitude Stokes
waves to subharmonic perturbations has been studied by Longuet-Higgins
\cite{longuet:78}, McLean \cite{mclean:82}, MacKay and Saffman
\cite{mackay:86}, Deconinck and Oliveras \cite{oliveras:11}, Deconinck
et.~al.~\cite{trichtchenko:16}, Dyachenko and Semenova
\cite{Dyachenko:2023:quasi}, and many others. Unstable subharmonic
perturbations grow exponentially until nonlinear effects become
important, leading to interesting long-time dynamics such as
Fermi-Pasta-Ulam recurrence \cite{osborne2000, berman:FPU:2005,
  bryant:stiassnie:94}. Correctly modeling the nonlinear interactions
of component waves with incommensurate periods requires a formulation
of the fully nonlinear equations of motion in a spatially
quasi-periodic framework. This framework was recently developed in
\cite{quasi:trav, quasi:ivp, quasi:bif, quasi:finite}, where numerical
algorithms were developed to study quasi-periodic traveling waves and
solutions of the initial value problem.  Our objective here is to
develop the analytical tools needed to make this approach to the
initial value problem rigorous.

Rigorous analytical results on the well-posedness of dispersive PDEs
with spatially quasi-periodic initial data have been carried out for
the nonlinear Schr\"odinger equation \cite{Oh15:NLS, Damanik24:NLS1,
  Damanik24:NLS2, Xu25:NLS}, the Korteweg--De Vries equation
\cite{Damnik16:kdv, Tsugawa12:kdv}, the Benjamin--Ono equation
\cite{Aitzhan24:BO, PapenburgBO1, PapenburgBO2}, and the
incompressible Euler equations (without a free boundary)
\cite{Sun23:Euler}. Two main approaches have been developed in these
works: one employs energy estimates while the other relies on a
combinatorial analysis in which only quasi-periodic initial data with
exponentially decaying Fourier modes are considered.  In this paper,
we follow the first approach by working in a suitably defined Sobolev
space that accounts for the quasi-periodicity of the initial
data. However, with the exception of \cite{Sun23:Euler}, the above
works on spatially quasiperiodic solutions concern equations whose
principal part has constant coefficients or can be reduced to such a
form. As a consequence, they do not directly address fully nonlinear
or genuinely quasilinear dispersive models in which the highest-order
terms depend nonlinearly on the solution and give rise to
variable-coefficient linearizations. Moreover, the coupling between
the wave profile and the surface velocity potential introduces
additional analytical difficulties.

For the water wave system, which is either fully nonlinear or
genuinely quasilinear depending on how it is formulated, there has
been a substantial amount  of recent work on well-posedness of the equations of
motion \cite{wu2,ambrose:03,wu,coutand:shkoller:2010,abz-c1, ad, ad1, ip, HIT, MR3667289, IPcap,
  IT-global, MR4161284, MR3625189, AIT,
  MR4483135}. However, there are currently no rigorous results
establishing the existence of spatially quasiperiodic solutions. This
gap between numerical evidence and analytical theory motivates the
present work, which aims to address the interaction between
quasilinear dispersive dynamics and spatial quasiperiodicity.

To the best of our knowledge the closest result to ours in terms of
the class of initial data considered is the work
\cite{AlazardBurqZuilyUL} of Alazard, Burq, and Zuily on the uniformly
local well-posedness theory for gravity water waves. Their framework
allows for bounded, non-decaying initial data and in particular
accommodates spatially quasiperiodic profiles. However, their results
do not imply ours, since it is not established that spatially
quasiperiodic initial data are propagated by the water-wave
flow within the same class of functions.

\medskip

\subsection{\textcolor{black}{Water waves in holomorphic coordinates on the line}}
\label{ss:ww}

The idea of using a conformal mapping formulation
of the water wave system was  introduced in~\cite{zakharov} and further developed, e.g., in
  \cite{choi1999exact, dyachenko2001dynamics, ruban:2005, milewski:10,
    dyachenko2016branch}. A major reformulation was
  introduced by Hunter, Ifrim and Tataru in \cite{HIT}, which is the
  approach we adapt to the spatially quasi-periodic setting here.
A closely related formulation was adapted to the
  spatially quasi-periodic setting in \cite{quasi:ivp}, where a
  numerical algorithm was developed that employs pseudo-spectral
  spatial discretizations and an exponential time-differencing
  approach to evolve the solutions in time.  
In this section, we establish notation and briefly recall the key points in the model derivation; full details are
given in \cite{HIT}.

The fluid domain evolves in time as the image of the complex lower
half-plane
\[
\mbb C^- = \{\zeta=\alpha+i\beta\,:\, \alpha\in\mbb R\,,\,\beta<0\}
\]
under a time-dependent conformal map $z(\zeta,t)$. The free surface is
parametrized by $Z(\alpha,t)$, where $Z$ is the restriction of $z$ to
the real line, $Z=z\vert_{\beta=0}$. Let
\begin{equation*}
  \phi(\zeta,t) = \phi^\text{phys}(z(\zeta,t),t), \qquad
  \psi(\zeta,t) = \psi^\text{phys}(z(\zeta,t),t),
\end{equation*}
denote the velocity potential and stream function in conformal
($\zeta$) or physical ($z$) variables throughout the fluid, and
let $q=\phi+i\psi$ and $q^\text{phys}=\phi^\text{phys}+i\psi^\text{phys}$
denote the complex velocity potential. Their traces on the free
surface, parametrized by $\alpha$, are written
\[
\Phi=\phi\vert_{\beta=0}, \qquad
\Psi=\phi\vert_{\beta=0}, \qquad
Q(\alpha,t)=\Phi(\alpha,t)+i\Psi(\alpha,t).
\]
We also define the arclength element of the parametrization, $s_\alpha
= |Z_\alpha(\alpha,t)|$.  The normal velocity $U$ of the free surface must
match that of the fluid,
\begin{equation}\label{eq:U:Psi}
  U = (X_t,Y_t)\cdot\mb n =
  \big(\phi^\text{phys}_x,\phi^\text{phys}_y\big)\cdot\mb n =
  \Re\Big\{q^\text{phys}_z\frac{iZ_\alpha}{s_\alpha}\Big\} =
  -\frac{\Psi_\alpha}{s_\alpha},
\end{equation}
where $\mb{n} = (-Y_\alpha,X_\alpha)/s_\alpha$. 
On the other hand, the tangential velocity $V$  of the free surface parameterization is chosen to maintain
a conformal mapping parametrization.
 
The key observation, first noted by Zakharov \emph{et~al.} in \cite{zakharov}, is that
$z_t/z_\zeta$ is bounded and analytic in the lower half-plane,  (see also
  \cite{quasi:ivp} for sufficient conditions in the quasi-periodic
  setting), and its imaginary part agrees with $U/s_\alpha$ when
$\beta=0$:
\begin{equation}\label{eq:zt:za}
  \frac{z_t}{z_\zeta}\bigg\vert_{\beta=0} = \frac{Z_t}{Z_\alpha} = \frac{(X_\alpha X_t+Y_\alpha Y_t)+i(X_\alpha Y_t
      - Y_\alpha X_t)}{s_\alpha^2} = \frac{V + iU}{s_\alpha}.
\end{equation}
Thus, maintaining $Z=z\vert_{\beta=0}$ requires $V/s_\alpha = H[U/s_\alpha]+C_1(t)$,
where $C_1(t)$ is an arbitrary constant (independent of $\alpha$) at each fixed time
and $H$ is the Hilbert transform, defined via the Cauchy principal value integral
\begin{equation}\label{eq:Hf:PV}
  \Hilbert[u](\alpha) = \frac{1}{\pi}\PV
  \int_{-\infty}^\infty\frac{u(\xi)}{\alpha-\xi}\,d\xi.
\end{equation}
In the present work, following \cite{HIT}, we set $C_1(t)=0$, which is also
the most natural choice in infinite depth in the quasiperiodic setting, as explained in
\cite{quasi:ivp}. Any other choice, say $C_1(t)=-\alpha_0'(t)$, leads
to the same physical solution, but reparametrized via
$\alpha\to\alpha-\alpha_0(t)$ (see also \cite{quasi:trav} for the quasiperiodic  setting). Solving \eqref{eq:zt:za}
gives
\begin{equation}
  \begin{pmatrix} X_t \\ Y_t \end{pmatrix} = \begin{pmatrix} X_\alpha & -Y_\alpha \\
    Y_\alpha & X_\alpha \end{pmatrix}\begin{pmatrix} V/s_\alpha \\ U/s_\alpha \end{pmatrix}.
\end{equation}
By \eqref{eq:U:Psi}, this is equivalent to $Z_t + FZ_\alpha=0$, where, following \cite{HIT}, we introduce the auxiliary variables
\begin{equation}\label{eq:F:P:def}
  F = P\left[ \frac{Q_\alpha - \bar Q_\alpha}{J}\right] , \qquad J = s_\alpha^2 = |Z_\alpha|^2, \qquad
  P = \frac12(I-iH).
\end{equation}
We note that $F=-(V+iU)/s_\alpha$, and $V$ is the tangential velocity of
the curve parametrization rather than of the fluid. The evolution equation
for the surface velocity potential,
\begin{equation}\label{eq:Phi:t}
  \Phi_t - \big(\phi_x^\text{phys},\phi_y^\text{phys}\big)\cdot (X_t,Y_t) +
  \frac{\Phi_\alpha^2+\Psi_\alpha^2}{2J} + Y = 0,
\end{equation}
is obtained from the Bernoulli
equation $\phi^\text{phys}_t +
\frac12|\nabla\phi^\text{phys}|^2+\frac{p}{\rho}+gy=C_2(t)$, where
$p$ is the pressure, $\rho$ is the fluid density, $g$ is the
acceleration of gravity (which has been set to 1 in \eqref{eq:Phi:t}
  after non-dimensionalization), and $C_2(t)$ is another arbitrary
constant that is allowed to vary in time. Neglecting the effects
of surface tension causes $p/\rho$ to be constant on the free
surface, which is eliminated by setting $C_2=p/\rho$.
Applying $2P$ to \eqref{eq:Phi:t} converts $\Phi_t$ into $Q_t$,
$\Re\{q^\text{phys}_z Z_t\}$ into $(Q_\alpha/Z_\alpha)Z_t$, and $Y$
into $-i(Z-\alpha)$, where we used $\Psi=-H\Phi$ and
$HY=X-\alpha$. The latter condition assumes $z(\zeta,t)-\zeta$ is
bounded in $\mbb C^-$ and that $P_0(X-\alpha)=0$ initially, where
$P_0$ projects a quasi-periodic function onto its mean.  Our choice of
$C_1(t)=0$ above preserves this mean value in infinite depth
\cite{quasi:ivp}, though this is not true in
finite depth \cite{quasi:finite}.  Together with the kinematic condition, we obtain
\begin{equation}\label{hww}
\left\{
\begin{aligned}
& \raisebox{-6pt}{$Z_t + F Z_\alpha = 0,$} \\[-3pt]
& Q_t + F Q_\alpha -i (Z-\alpha) + P\left[ \frac{|Q_\alpha|^2}{J}\right] = 0,
\end{aligned}
\right.
\end{equation}
which are the water wave equations in holomorphic coordinates from \cite{HIT}.

As $P$ retains exactly half of the zero mode, it is not a projector
when acting on periodic or quasi-periodic functions, which generally
have a non-vanishing zero mode. For a detailed discussion of the
structure of the operator $P$, we refer the reader to the work of the
first author with Hunter and Tataru~\cite{HIT}. In the spatially
periodic and
quasi-periodic settings, there is a projection $P_0$ onto this zero
mode (see \S\ref{ss:quasi} below), and we define
\begin{equation}
\label{def:PsharpPiPr}
P^\sharp := P - \frac12 P_0, \qquad \bar P := \frac12(I+iH), \qquad
\bar P^\sharp := \bar P - \frac12 P_0
\end{equation}
so that
\begin{equation}
I = P+\bar P = P^\sharp + \bar P^\sharp + P_0, \qquad
(P^\sharp)^2=P^\sharp, \qquad (\bar P^\sharp)^2=\bar P^\sharp, \qquad
P^\sharp\bar P^\sharp = 
\bar P^\sharp P^\sharp = 0
\end{equation}
and $P_0P^\sharp = P^\sharp P_0 = P_0\bar P^\sharp = \bar P^\sharp P_0
= 0$.   We also define the related projectors 
\begin{equation}
%\label{def:PsharpPiPr}
 P^r := P^\sharp + \Re P_0, \qquad P^i := P^\sharp + i \Im P_0,
\end{equation} 
and similarly $\bar P^r$, $\bar P^i$, so that we have the following relations:
\begin{equation}
\label{eq:relPsharpPiPr}
 I = P^i + \bar P^r = P^r + \bar P^i, \qquad 
P^i \bar P^r = P^r \bar P^i = 0, \qquad P^i = -i P^r i.
\end{equation} 
Both $Z-\alpha$ and $Q$ will evolve in state spaces that are closed subspaces of holomorphic functions, satisfying $\bar
P^r(Z-\alpha)=0$ and $\bar P^i Q=0$,  within various Sobolev
spaces.
The Fourier transform of such functions (which are tempered
  distributions in the spatially quasi-periodic case) are supported on
$(-\infty,0]$ and admit bounded holomorphic extensions to $\mbb C^-$.

As the system \eqref{hww} is fully nonlinear, a standard procedure is
to convert it into a quasilinear system by differentiating it. To
simplify the result, we introduce the auxiliary real function $b$,
which we call the {\em advection velocity}, given by
\begin{equation}
  b = \Re F =  P \left[\frac{{Q}_\alpha}{J}\right] +  \bar P\left[\frac{\bar{Q}_\alpha}{J}\right]
  = \frac{V_f-V}{s_\alpha},
\label{defb1}
\end{equation}
where $V_f=\Phi_\alpha/s_\alpha$ is the tangential component of the fluid velocity
at the free surface while $V_f-V$ measures this tangential velocity relative to that
of the curve parametrization. 

Introducing
\begin{equation}
  W(\alpha,t) = Z(\alpha,t) - \alpha,
\end{equation}
and using $b$, the system \eqref{hww} may be written
\begin{equation}
\left\{
\begin{aligned}
& W_t + b (1+W_\alpha) = \frac{\bar Q_\alpha}{1+\bar W_\alpha}, \\
&Q_t +  b Q_\alpha - iW =  \bar P\left[ \frac{|Q_\alpha|^2}{J}\right],
\end{aligned}
\right.
\end{equation}
where the terms on the right are antiholomorphic and disappear when
the equations are projected onto the holomorphic
space via $P^\sharp$. Differentiating with respect to $\alpha$
and following identical arguments to \cite{HIT} yields the
system
\begin{equation} \label{ww2d-diff-real}
\left\{
\begin{aligned}
 & \W_{ t} + b \W_{ \alpha} + \frac{(1+\W) R_\alpha}{1+\bar \W}   =  (1+\W)M,
\\
& R_t + bR_\alpha  =  i\left(\frac{\W - a}{1+\W}\right),
\end{aligned}
\right.
\end{equation}
where
\begin{gather}
  \W:=W_\alpha, \qquad R := \frac{Q_\alpha}{1+W_\alpha}, \qquad
  Y := \frac{\W}{1+\W}, \\
  \label{defa}
  a := i\left(\bar P \left[\bar{R} R_\alpha\right]- P\left[R\bar{R}_\alpha\right]\right), \\
  M :=  \frac{R_\alpha}{1+\bar \W}  + \frac{\bar R_\alpha}{1+ \W} -  b_\alpha =
  \bar P [\bar R Y_\alpha- R_\alpha \bar Y]  + P[R \bar Y_\alpha - \bar R_\alpha Y].
\end{gather}
Here $R(\alpha,t)$ parametrizes the (complex conjugate of the) fluid velocity on the free surface
and $a$ is the real {\em frequency-shift}. The auxiliary functions $M$ and $Y$ were introduced
to simplify the formulas and avoid rational expressions in many places above and in the sequel.
We note that $b$ in \eqref{defb1} can also be written
\begin{equation}
  b = 2\Re P[R(1-\bar Y)].
\end{equation}
The system \eqref{ww2d-diff-real} governs an evolution in the space of holomorphic functions, and will be used both directly and in its projected version.
The functions $a$ and $b$ also play a fundamental role in the linearized equations, which are computed in Section~\ref{s:linearized}.

\subsection{The set-up for quasiperiodic solutions} \label{ss:quasi}
We consider the class of  quasi-periodic functions $\tu(\alpha)$ of the form 
\begin{equation}\label{general_quasi_form}
  \tu(\alpha) =  u(\bds{k} \alpha), \qquad
  u(\bds\alpha) = \sum_{\boldsymbol{j}\in\mathbb{Z}^d}\hat{u}_{\boldsymbol{j}}
  e^{i\la\boldsymbol{j},\,\boldsymbol{\alpha}\ra}, \qquad
  \alpha\in\mbb R, \;\; \bds\alpha,\bds k \in \mathbb{R}^d,
\end{equation}
where $\la\cdot , \,\cdot\ra$ denotes the standard inner
product in $\mathbb{R}^d$,  and $u$ is a periodic
function defined on the $d$-dimensional torus
$\mathbb{T}^d := \mbb R^d\big/(2\pi\mbb Z)^d,$ which we will refer to as the periodic parent functions of $\tu$.

Entries of the vector $\boldsymbol{k}$ are called the basic wave
  numbers (or basic frequencies) of $\tu$ and are required to be
  linearly independent over $\mathbb{Z}$. If $\bds{k}$ is
  given, one can reconstruct the Fourier coefficients
  $\hat u_{\bds j}$ from $\tu$ or $u$ via
\begin{equation}\label{uhat:from:u}
\begin{aligned}
  \hat{u}_{\bds{j}} =& \lim_{L\to\infty} \frac{1}{2L} 
  \int_{-L}^L \tu(\alpha) e^{-i\la \bds{j}, \bds{k}\ra\alpha} d\alpha, \\
    =& \frac1{(2\pi)^d}\int_{\mbb T^d}
    u(\bds\alpha)e^{-i\la\bds
      j,\bds\alpha\ra} \, d\alpha_1\cdots\, d\alpha_d.
  \end{aligned}
  \qquad \bds{j}\in\mathbb{Z}^d.
\end{equation}

We define the projection operator 
$\Projection_0$ that acts on $\tu$ and $u$ via
\begin{equation}\label{eq:proj0}
  \Projection_0 [\tu] = \Projection_0 [u] 
  = \hat u_{\bds 0}.
\end{equation}
Note that  $\Projection_0$ returns the mean value, viewed as a constant function on $\mathbb R$ or $\mathbb T^d$. There
  are two versions of  $P_0$, one acting on quasi-periodic
  functions defined on $\mbb R$ and one acting on torus functions
  defined on $\mbb T^d$.

 We introduce a quasi-periodic
Hilbert transform  $\Hilbert$, which is defined so that it has the property $\Projection_0\Hilbert[\tu]=0$.

\begin{definition}
  The Hilbert transform of a quasi-periodic function
  $\tu(\alpha)$ of the form \eqref{general_quasi_form} is defined to be
  \begin{equation}\label{eq:hilb:def}
    \Hilbert[\tu](\alpha) = \sum\limits_{\boldsymbol{j}\in\mathbb{Z}^d}
            (-i)\sgn(\la\boldsymbol{j},\,\boldsymbol{k}\ra)
            \hat{u}_{\boldsymbol{j}}
            e^{i \la\boldsymbol{j},\,\boldsymbol{k}\ra\alpha}.
  \end{equation}
\end{definition}
Here \emph{sgn} is defined to take the values $\{-1,0,1\}$.
This agrees with the standard definition \eqref{eq:Hf:PV} of the
Hilbert transform as a Cauchy principal value integral 
\cite{dyachenko2016branch}.

  As with  $P_0$, there is an analogous operator on $L^2(\mbb
    T^d)$ such that $\Hilbert[\tu](\alpha)=H[u](\bds k\alpha)$.
  The formula is
  \begin{equation}
    \Hilbert[ u](\bds\alpha) = \sum\limits_{\boldsymbol{j}\in\mathbb{Z}^d}
            (-i)\sgn(\la\boldsymbol{j},\,\boldsymbol{k}\ra)
            \hat{u}_{\boldsymbol{j}}
            e^{i \la\boldsymbol{j},\,\boldsymbol{\alpha}\ra}.
  \end{equation}

Now that we have a definition of the Hilbert transform in place, 
we can give a precise meaning in the Fourier space to the constructions introduced earlier in the introduction and,  
in particular, to the projectors \eqref{def:PsharpPiPr}.

On the torus we have the differentiation operator
\begin{equation*} \label{eq:directional:dev}
\partial_\alpha := k_1 \partial_1 + \cdots + k_d \partial_d,
\end{equation*}
which has symbol $i \xi_\alpha$ where 
\[
\xi_\alpha : = k_1 \xi_1 + \cdots k_d \xi_d.
\]
Then the formulation of the water wave equations on the 
torus $\T^d$ is identical to \eqref{ww2d-diff-real}, but where the projection $P$ is defined in \eqref{eq:F:P:def},  selecting lattice 
frequencies $\bds\xi$ with $\xi_\alpha < 0$ plus half of  the zero mode.
From now on, we focus on the periodic parent functions defined on $\T^d$ and establish the local well-posedness in suitably defined Sobolev spaces for these functions.

\subsection{Function spaces} \label{ss:function}
As the basis for the function spaces on the torus we use 
the classical Sobolev spaces $H^s$. The $H^s$ norm on the torus induces a corresponding norm on the quasiperiodic
functions on the line, defined as 
\begin{equation}
\| \tu \|_{\qH^s} := \| u \|_{H^s(\T^d)}   . 
\end{equation}

We remark that the $\qH^s$ norm on the line differs from the $H^s$ norm even locally. The easy connection is that
\[
\qH^s \subset H^{s-\frac{d-1}{2}}_{loc}, \qquad s > \frac{d-1}{2} ,
\]
which follows by Sobolev embeddings. Any converse to this, on the other hand, depends on the diophantine properties of $\bds k$.

On top of the $H^s$ spaces, our evolution also involves  the $\alpha$-direction differentiation, so we introduce a 
two-parameter family of spaces 
\[
\| u \|_{H^{s,\theta}} := \| \partial_\alpha^{\leq\theta}u \|_{H^s}.
\]
This is directly defined for nonnegative integer $\theta$, but by interpolation it extends to all nonnegative $\theta$,
\[
\| u \|_{H^{s,\theta}} := \| \langle D_\alpha\rangle^{\theta}u \|_{H^s}, \qquad  \langle D_\alpha\rangle
= (1+D_\alpha^2)^\frac12,
\]
where the symbol of $D_\alpha$ is $ \xi_\alpha$.

For the study of the differentiated gravity wave system, \eqref{ww2d-diff-real},  in the $\mathbb{T}^d$ setting, we will employ the following spaces 
\begin{equation}
\H^s := H^s(\mbb T^d) \times H^{s,\frac12}(\mbb T^d), \qquad s > \frac{d+1}{2}.
\end{equation}

\subsection{Control parameters}

For the estimate in this paper, an important role is played by our  control parameters, which are defined as 
\begin{equation}
A := \|(\W,R)\|_{\H^{s-\frac12}}, \qquad B := 
\|(\W,R)\|_{\H^{s}}.  
\end{equation}
Here $A$ is close to scaling, and will govern elliptic estimates at fixed time. $B$, on the other hand, is roughly one-half derivative above scaling and will govern the energy growth rate.

Since $s > \frac{d+1}{2}$ these in particular guarantee, by Sobolev embeddings, that 
\begin{equation}
\| \W\|_{L^\infty} + \| \langle D_\alpha\rangle ^\frac12 R\|_{L^\infty} \lesssim A,
\qquad 
\| \langle D_\alpha\rangle^\frac12 \W\|_{L^\infty} + \| \partial_\alpha R\|_{L^\infty} \lesssim B.
\end{equation}

\subsection{The main result}

In brief, this is a comprehensive local well-posedness theory for spatially quasiperidic solutions for water waves. Classically, even in a semilinear setting one would rely on: (i) very high regularity, as loss of derivatives is a major issue; (ii) and structural properties (symmetries, Hamiltonian form, null conditions).  In short this happens as the background quasiperiodic geometry interacts badly with quasilinearity nature of the equations. Instead, in this work we work with a fully\footnote{Even after differentiation, the resulting system is still fully nonlinear, as opposed to quasilinear, due to the nonlinear dependence on $\W$ in the  second equation.} nonlinear system, and we are able to consider solutions at low regularity, just one-half derivative above scaling. Our main result
for the gravity wave system in the quasiperiodic setting is as follows:

\begin{theorem}\label{thm:lwp}
The differentiated gravity wave system \eqref{ww2d-diff-real} is locally well-posed in $\H^s(\T^d)$ for $s > \frac{d+1}2$.   
\end{theorem}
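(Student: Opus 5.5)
The plan is to follow the quasilinear scheme of \cite{HIT}, transplanted to $\T^d$ with the anisotropic spaces $H^{s,\theta}$. The torus version of \eqref{ww2d-diff-real} differs from the line version only through the symbol $\xi_\alpha$ of $\partial_\alpha$. The dispersive and transport structure lives purely in the $\alpha$-direction. The remaining $d-1$ transverse directions enter only through the $H^s$ regularity. Those directions are handled by a Littlewood--Paley decomposition in the full frequency $|\bds\xi|$; the $\alpha$-derivative gains are measured by $\langle D_\alpha\rangle$.

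The first step is to set up the toolbox on $\T^d$. I would prove Moser-type product estimates in $H^{s,\theta}$ and bounds for the projections $P$, $P^\sharp$, $P_0$; these are Fourier multipliers bounded on every $H^{s,\theta}$. I would also prove commutator estimates of the form
\[
\|[P,f]\partial_\alpha g\|_{H^{\sigma}} \lesssim \|\partial_\alpha f\|_{L^\infty}\|g\|_{H^\sigma}+\dots,
\]
together with Coifman--Meyer type bounds for $[\langle D\rangle^s,b]\partial_\alpha$. These rest on the fact that the symbol $\sgn(\xi_\alpha)$ changes only across the hyperplane $\xi_\alpha=0$. In high--low interactions, $|\xi_\alpha|$ of the low factor bounds the jump, so the commutator costs $\partial_\alpha$ of the low-frequency factor, exactly as on the line. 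Using these, I would derive elliptic bounds $\|(Y,b,a,M)\|\lesssim_A$ in terms of $A$ and $B$, including $a\ge0$ via the identity $a=i(\bar P[\bar RR_\alpha]-P[R\bar R_\alpha])$. In particular, $\|a\|_{L^\infty}+\|b_\alpha\|_{L^\infty}+\|M\|_{L^\infty}\lesssim_A B^2$ or $B$, as appropriate.

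Next, I would compute the linearized equations in the good variables $(w,r)=(\delta W,\delta Q-R\delta W)$, as in Section~\ref{s:linearized}. I would prove $L^2\times H^{0,1/2}$-type energy estimates with the energy
\[
E_{\lin}(w,r)=\int (1+a)|w|^2+\Im(r\bar r_\alpha)\,d\bds\alpha ,
\]
with growth bounded by $C(A)B^2$. Then I would prove the higher energy estimates for $(\W,R)$ in $\H^s$. For this I would apply $\langle D\rangle^{s}$, meaning full-gradient Sobolev derivatives, to the differentiated system. The key point is that $\langle D\rangle^s(\W,R)$ approximately solves the linearized equation, with source terms controlled by $C(A)B^2\|(\W,R)\|_{\H^s}$. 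Quasilinear modified energies with cubic corrections, or a normal form, would be needed as in \cite{HIT}.

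The main obstacle is this step. Transverse derivatives $\partial_j$ do not commute with the $\alpha$-structure in the favorable way that $\partial_\alpha$ does. A term like $\partial_j b\,\partial_\alpha(\cdot)$ arising from commuting $\langle D\rangle^s$ with $b\partial_\alpha$ must be estimated by using that the $\partial_\alpha$ can be absorbed by the half-derivative in $H^{s,1/2}$ on $R$. On $\W$, however, only $H^s$ is available. I would try a paradifferential reduction: write $b\partial_\alpha$ as $T_b\partial_\alpha$ plus perturbative terms, symmetrize using the energy weight $(1+a)$, and exploit the holomorphic structure. The holomorphic structure helps because $\bar P$-components of bad products vanish when projected by $P^\sharp$.

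With the energy estimates in hand, the rest of the proof is standard. I would construct solutions by an iteration scheme, such as a time-discretized Euler scheme or regularization by frequency truncation, using the a priori $\H^s$ bound. Uniqueness and Lipschitz dependence in the weaker space $\H^0$ follow from the linearized estimate. Continuous dependence in $\H^s$ follows via frequency envelopes (Bona--Smith). The constraints $\bar P^r W=0$ and $\bar P^iQ=0$ are propagated, and the persistence of $P_0W=0$ is checked directly.
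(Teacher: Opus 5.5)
Your outline follows the paper's architecture: $\H^0$ bounds for the linearized and difference equations, higher energy bounds, an iterative construction of regular solutions, and frequency envelopes for rough data. However, the step you single out as the main obstacle is never carried out, and the energy you write down would not close as stated.

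\textbf{The energy functional.} Differentiating \eqref{eq:W:R}, the pair $(w,r)=\partial^\kappa(\W,R)$ (or $\langle D\rangle^s(\W,R)$) satisfies, at leading order, $D_t w+(1-\bar Y)(1+\W)r_\alpha$ and $D_t r-i(1+a)(1-Y)^2 w$. This is not the equation for the undifferentiated good variables $(\delta W,\delta Q-R\delta W)$ of \cite{HIT}, which is the equation the weight $1+a$ is adapted to. With a weight $c$, the cross terms combine into $2\Re\int w\,\bar r_\alpha\,[(1+a)(1-Y)^2-c(1-Y)(1+\bar\W)]\,d\balpha$. This vanishes only for $c=(1+a)|1-Y|^2$, which is the paper's weight. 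With $c=1+a$, a term of the form $\int w\,\bar r_\alpha\,(1+a)(1-Y)(Y+\bar\W)\,d\balpha$ survives, and it is not controlled by $\|w\|_{L^2}\|r\|_{H^{0,\frac12}}$. Equivalently, you could keep the weight $1+a$ but take $r=(1+\W)\partial^\kappa R$.

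There is also a coercivity problem. For holomorphic $r$ one has $\Im\int r\bar r_\alpha\,d\balpha=(2\pi)^d\sum_{\bds j}|\la\bds j,\bds k\ra|\,|\hat r_{\bds j}|^2$. Since $\la\bds j,\bds k\ra$ accumulates at $0$ for rationally independent $\bds k$, this does not control $\|r\|_{H^{0,\frac12}}$. This small-divisor effect is specific to the quasiperiodic setting, and it is why the paper adds $\int|r|^2$ to $E_{\lin}$. Finally, a quadratic energy of this type gives growth $C(A)B$, not $C(A)B^2$, and $C(A)B$ is all that local well-posedness needs.

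\textbf{The higher-order estimate.} This is the main gap. The difficulty you describe is largely illusory, and the cure you propose (paradifferential symmetrization, cubic corrections, or a normal form) is both unspecified and unnecessary. The system on $\T^d$ is invariant under translations in every coordinate direction, so each $\partial_j$ interacts with it exactly as $\partial_\alpha$ does. For $|\kappa|=k$, $\partial^\kappa(\W,R)$ solves the reduced linearized system \eqref{lin(hwhr)usf} with perturbative sources. The plain quadratic energy $\sum_{|\kappa|=k}E_{\lin}(\partial^\kappa\W,\partial^\kappa R)$ then closes.

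The term you worry about, $\partial_j b\,\partial^{k-1}\W_\alpha$, is harmless. It is bounded by $\|\nabla b\|_{L^\infty}\|\W\|_{H^k}$, because $\partial_\alpha$ is a first-order operator, and $\|\nabla b\|_{L^\infty}\lesssim\|b\|_{H^{s,\frac12}}\lesssim_A B$. The delicate terms are at the opposite extreme, such as $\partial^k b\,\W_\alpha$, where $\W_\alpha\in H^{s-1}$ need not be bounded.

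What your plan lacks is the following:
\begin{itemize}
\item the anisotropic Bernstein inequality of Lemma~\ref{lem:Bernstein}, for Littlewood--Paley pieces in the $\alpha$-frequency $\la\bds j,\bds k\ra$, proved by counting lattice points in the slabs $|\la\bds j,\bds k\ra|\approx\lambda$;
\item the product bounds of Lemma~\ref{l:comb1} that follow from it, e.g.\ $\|fu\|_{L^2}\lesssim\|f\|_{H^{s-1}}\|u\|_{H^{0,\frac12}}$.
\end{itemize}
These let the half $\alpha$-derivative that $b$ and $a$ inherit from $R\in H^{s,\frac12}$ pay for $\W_\alpha$. They control all sources in $\H^0$ by $C(A)B\|(\W,R)\|_{\H^k}$.

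This requires localizing in $\xi_\alpha$, not in $|\bds\xi|$ as you propose. $\xi_\alpha$ is also the variable in which holomorphy and $P$ order frequencies. A mode with $|\xi_\alpha|\ll|\bds\xi|$ gains nothing from $\langle D_\alpha\rangle^{\frac12}$, so a full-frequency decomposition does not see this gain.

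The paper also never uses fractional $\langle D\rangle^s$ energies. It proves integer-$k$ bounds and reaches every $s>\frac{d+1}2$ through frequency envelopes combined with the $\H^0$ difference bound of Theorem~\ref{thm:diff}. That bound is proved directly rather than read off from the linearized estimate. The reason is that the difference equation contains the quadratic term $(1+a)(1-Y^1)^2(1-Y^2)w^2$, which must be placed in $H^{0,\frac12}$ using only $\|w\|_{L^2}$.
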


By the trace theorem, the solutions in our result have merely $\H^{1+}_{loc}$
regularity on the real line.

This problem is quasilinear, so the local well-posedness should be interpreted in the Hadamard style, see \cite{IT-primer}.

\subsection{Outline of the proof}

 The paper is structured in a modular fashion, allowing for a clean presentation that decouples the various steps involved in the proof of our main result, as outlined below. Section~\ref{s:pre} introduces the quasiperiodic setting and the associated Littlewood–Paley decomposition, which will serve as an essential preparatory component for the analysis carried out in Section~\ref{s:existence} and Section~\ref{s:rough sol}.

In Section~\ref{s:linearized}, we collect the energy estimates and establish the well-posedness theory for the linearized equation. The analysis in this section relies on a carefully crafted framework which, in turn, requires a number of refined estimates. These are divided into two main categories: (i) Section~\ref{s:multi}     gathers all multilinear and commutator bounds used throughout the paper, many of which apply in a more general setting than that of water waves; whereas (ii) in Section~\ref{s:water} we focus on estimates specific to the water-wave problem. The latter are established for both the unknowns and coefficients arising from the diagonalization of the water-wave system.

As is well known, the analysis of the linearized problem constitutes the core of the arguments that follow. It provides the backbone for the energy estimates derived for the fully nonlinear system  in Section~\ref{s:ee}, and it also serves as a key ingredient in the constructive  proof of the existence of regular solutions in Section~\ref{s:existence}.
It also provides difference bounds for solutions,
which allow us to pass from rough to smooth solutions in this quasilinear setting, a step  which is carried out in Section~\ref{s:rough sol}.

%%%%%%%%%%%%%%%%%%%%%%%%%%%
%%%%%%%%%%%%%%%%%%%%%%%%%%%
%%%%%%%%%%%%%%%%%%%%%%%%%%%
%%%%%%%%%%%%%%%%%%%%%%%%%%%
\section{The Littlewood-Paley decomposition for quasiperiodic functions}
\label{s:pre}
%%%%%%%%%%%%%%%%%%%%%%%%%%%
%%%%%%%%%%%%%%%%%%%%%%%%%%%
%%%%%%%%%%%%%%%%%%%%%%%%%%%
%%%%%%%%%%%%%%%%%%%%%%%%%%%

In this section, we recall the Littlewood–Paley decomposition for decaying functions on $\R$ and then extend it to the quasi-periodic setting,
developing the basic framework needed for the \emph{paradifferential calculus}, following the approach of Bony \cite{Bony1981}. These notions are used in the local well-posedness existence scheme later in Section~\ref{s:existence}.

%%%%%%%%%%%%%%%%%%%%%%%%%%%%%
%%%%%%%%%%%%%%%%%%%%%%%%%%%%%
\subsection{Littlewood-Paley decomposition on \texorpdfstring{$\mathbb{R}$}{R}}
\label{sec:lp:decomp}
%%%%%%%%%%%%%%%%%%%%%%%%%%%%%
%%%%%%%%%%%%%%%%%%%%%%%%%%%%%

One important tool in
dealing with dispersive equations is the Littlewood-Paley
decomposition.  We recall its definition and also its usefulness in
the next paragraph. We begin with the Riesz decomposition
\[
 1 = P_- + P+,
\]
where $P_\pm$ are the Fourier projections to $\pm [0,\infty)$. From 
\[
\widehat{Hf}(\xi)=-i\sgn (\xi)\, \hat{f}(\xi),
\]
 we observe that
\begin{equation}\label{hilbert-re}
iH = P_{+} - P_{-}.
\end{equation}

Let $\chi \in C_c^\infty(\mathbb{R})$ satisfy
\[
\chi(\xi)=1 \quad \text{for } |\xi|\le 1,
\qquad
\chi(\xi)=0 \quad \text{for } |\xi|\ge 2,
\]
and define
\[
\varphi(\xi) := \chi(\xi) - \chi(2\xi).
\]
For any integer $l\geq 0$, set
\begin{equation*}\label{eq:varphi:l}
\varphi_l(\xi) := \varphi(2^{-l}\xi).
\end{equation*}

We define the Littlewood-Paley operators $P_{\leq l}$ and $P_l := P_{\leq l} - P_{\leq l-1}$ (with the convention $P_{\leq -1} = 0$) by
\begin{equation}
\widehat{P_{\leq l} f}(\xi) := \chi(2^{-l} \xi) \hat f(\xi), \qquad
\widehat{P_{l} f}(\xi) := \varphi_l(\xi) \hat f(\xi).
\end{equation}

Note that all the operators $P_l$,
$P_{\leq l}$ are bounded on all translation-invariant Banach spaces,
thanks to Minkowski's inequality.  We define $P_{>l} := P_{\geq l+1} 
:= 1 - P_{\leq k}$.

For simplicity and because $P_{\pm}$ commutes with the
Littlewood-Paley projections $P_l$ and $P_{<l}$, we will introduce the
following notation $P^{\pm}_l:=P_lP_{\pm}$ , respectively
$P^{\pm}_{<l}:=P_{\pm}P_{<l}$.  In the same spirit, we introduce the
notations $f^{+}_l:=P^{+}_l f$, and $f^{-}_l:=P^{-}_l f$,
respectively.

Given the projectors $P_l$, we also introduce additional projectors $\tilde P_l$
with slightly enlarged support (say, by $2^{l-4}$) and symbol equal to $1$ in the support of $P_l$.

From Plancherel's theorem  we have the bound
\begin{equation}\label{eq:planch}
 \| f \|_{H^s(\R)} \approx (\sum_{l=0}^\infty \| P_l f\|_{H^s(\R)}^2)^{1/2}
\approx (\sum_{l=0}^\infty 2^{2ls} \| P_l f\|_{L^2(\R)}^2)^{1/2} 
\end{equation}
for any $s \in \mathbb{ R}$.

%%%%%%%%%%%%%%%%%%%%%%%%%%%
%%%%%%%%%%%%%%%%%%%%%%%%%%%
\subsection{Littlewood--Paley decomposition in the quasiperiodic setting}
\label{ss:LP:quasi}

Recall that a function $\tu:\mathbb R\to\mathbb C$ is said to be quasiperiodic with frequency vector
$\bds k\in\mathbb R^d$ if it can be written in the form
\[
\tu(\alpha) = u(\bds k\alpha),
\]
where $u:\mathbb T^d\to\mathbb C$ is $2\pi$--periodic. When the components of $\bds k$ are
rationally independent, the map $\alpha\mapsto \bds k \alpha \pmod{2\pi}$ defines a dense linear flow on
$\mathbb T^d$. Rather than working directly with quasiperiodic functions on $\mathbb R$, we
carry out all analytic constructions on the torus $\mathbb T^d$, where Fourier analysis and
Littlewood--Paley theory are available. Results are then interpreted for quasiperiodic
functions via the composition $\tu(\alpha)=u(\bds k \alpha)$.

Accordingly, we fix a vector $\bds k\in\mathbb R^d$ with rationally independent components, and
let 
\[
u(\balpha) = \sum_{\bds j\in\mathbb{Z}^d} \hat u_{\bds j}\, e^{i\la\boldsymbol{j},\,\boldsymbol{\alpha}\ra},
\qquad \balpha \in \mathbb{T}^d,
\]
denote the Fourier series of a function on the torus.

We define the Littlewood--Paley projections adapted to the quasiperiodic direction $\bds k$ by
\begin{equation}\label{eq:LP-quasiperiodic}
P_l^{\alpha} u(\balpha)
:= \sum_{\bds j\in\mathbb{Z}^d}
\varphi_l(\la \bds j,\, \bds k  \ra)\,\hat u_{\bds j}\, e^{i \la\bds j, \, \bds \alpha \ra},
\qquad l\ge 0,
\end{equation}
and the associated low--frequency cutoff
\begin{equation}\label{eq:LP-low}
P_{\le l}^{\alpha} u(\balpha)
:= \sum_{\bds j\in\mathbb{Z}^d}
\chi(2^{-l} \la \bds j, \bds k \ra)\,\hat u_{\bds j} \, e^{i \la\bds j, \, \balpha \ra}.
\end{equation}

These operators localize Fourier modes according to the scalar frequency $\la \bds j, \, \bds k \ra$, rather
than the full lattice norm $|\bds j|$. In particular, they are adapted to operators whose symbols
depend only on $\la \bds j, \, \bds k \ra$, as is typical in quasiperiodic problems.

By Plancherel's theorem on $\mathbb T^d$, the Littlewood--Paley projections defined in
\eqref{eq:LP-quasiperiodic} yield the norm equivalence
\begin{equation}\label{eq:LP-equivalence}
 \| u \|_{H^{0,s}(\mathbb T^d)}
 \approx
 \Big( \sum_{j=0}^\infty 2^{2ls}\, \| P_l^{\alpha} u \|_{L^2(\mathbb T^d)}^2 \Big)^{1/2}
\end{equation}
for any $s\in\mathbb R$. We will sometimes omit $\alpha$ in the projector and use the following notation for simplicity:
\begin{equation*}
P_\lambda u = u_\lambda = P_l^\alpha u, \qquad \lambda = 2^l \geq  1.
\end{equation*}

\section{Energy estimates and local well-posedness for
the linearized equation}
\label{s:linearized}
In this section, we derive the linearized system associated with the evolution equation \eqref{ww2d-diff-real}. As is by now well understood, the linearized system plays a crucial role in establishing well-posedness. We therefore devote the remainder of this section to a detailed analysis of its structure and properties, focusing in particular on the energy estimates, which form the second main objective of this section.

\subsection{The linearized system}
We begin by recalling the differentiated system in \eqref{ww2d-diff-real}:
\begin{equation} \label{eq:W:R}
\left\{
\begin{aligned}
 & \W_{ t} + b \W_{ \alpha} + \frac{(1+\W) R_\alpha}{1+\bar \W}   =  (1+\W)M,
\\
& R_t + bR_\alpha = i\left(\frac{\W - a}{1+\W}\right).
\end{aligned}
\right.
\end{equation}

We denote the linearized variables around a solution $(\W, R)$ by $(w, r)$, which are restricted to the class of holomorphic functions. We define
\begin{equation}
D_t := \partial_t + b\partial_\alpha.
\end{equation}

The linearization of the differentiated water wave system \eqref{eq:W:R} reads
\begin{equation}\label{lin(hwhr)}
\left\{
\begin{aligned}
 &D_t w + (1 - \bar Y)(1 + \W_\alpha )  r_\alpha 
 = f, \\
&D_t r  -i(1 + a)(1 - Y)^2 w = g,
\end{aligned}
\right.
\end{equation}
where the source terms $f$ and $g$ are given by 
\begin{equation}\label{e:source}
\left\{
\begin{aligned}
&f =  - (1-\bar Y)R_\alpha w + (1-\bar Y)^2(1+\W)R_\alpha \bar w + M w -\W_\alpha \delta b + (1+\W)\delta M, \\
&g = - R_\alpha \delta b - i(1-Y)\delta a, 
\end{aligned}
\right.
\end{equation}
with $\delta b$, $\delta a$ and $\delta M$ 
arising as the linearizations of $b,a$ and $M$,
given by
\[
\left\{
\begin{aligned}
&\delta b = 2\Re P[(1 - \bar Y) r - (1 - \bar Y)^2 R \bar{w}], \\
&\delta a =  2 \Im P[\bar R_\alpha r + R\bar r_\alpha], \\
&\delta M = 2\Re P[r \bar Y_\alpha -2 R(1 - \bar Y) \bar{w} \bar Y_\alpha + R (1 - \bar Y)^2 \bar{ w}_\alpha - \bar{r}_\alpha Y - \bar R_\alpha (1 - Y)^2 w] .
\end{aligned}
\right.
\]

\subsection{Energy estimates for the linearized system}
We note that a conserved energy for the linearized equation around the zero solution is 
\begin{equation}\label{E_{0}}
\E_{0} (w,r) := \int_{\T^d} \frac12 |w|^2 + \frac{1}{2i} (r \bar r_\alpha - 
\bar r r_\alpha) +\vert r\vert^2 \,\, d\balpha.
\end{equation}
Around a nonzero solution we  instead define the linearized energy as 
\begin{equation}
\label{eq:used-energy}
E_{\lin}(w,r) := \int_{\T^d} (1+a) |1-Y|^2 |w|^2 + \Im ( r_\alpha \bar r)   +\vert r\vert ^2 \, d\balpha. 
\end{equation}
Here we note the quasilinear type correction factor $(1+a) |1-Y|^2$, which arises due to 
the balance of the coefficients on the left in 
\eqref{lin(hwhr)}.

\begin{remark}
The expression in \eqref{eq:used-energy} follows from the work of Ai-Ifrim-Tataru in \cite{AIT}; there the analysis uses two linearizations which end up being connected via a nonlinear transformation:
\begin{itemize}
\item one linearization, denoted by $(w, r)$,
around the solution of the undifferentiated system for $(W, Q)$, and  
\item  the second one, denoted by $(\hw,\hr)$, and associated to the $(\W, R)$-differentiated system.
\end{itemize} 
Throughout this paper, we omit the hat notation used in \cite{AIT}.
\end{remark}

With the definition above, we can now state our main energy estimate for the linearized equation:

\begin{theorem}
\label{plin-short}
The energy functional $E_{\lin}(w,r)$ satisfies
\begin{enumerate}
    \item Coercivity 
\begin{equation}
\label{coercivity-lin}
 E_{\lin}(w,r) \approx \| (w,r)\|_{\H^0}^2   ;
\end{equation}
\item Growth bound 
\begin{equation}
\label{ee-lin}
   \left|\frac{d}{dt}  E_{\lin}(w,r)\right| \lesssim_A B \| (w, r)\|_{\H^0}^2 .
\end{equation}
\end{enumerate}
\end{theorem}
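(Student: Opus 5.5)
We prove the two parts separately, working throughout on $\T^d$ with $w$, $r$ holomorphic, meaning $\bar P^\sharp w = \bar P^\sharp r = 0$.

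\textbf{Coercivity.} We first need pointwise bounds on the weight: $1+a \approx 1$ and $|1-Y|^2\approx 1$, with constants depending on $A$.
\begin{itemize}
\item For $1-Y = (1+\W)^{-1}$, the bound follows once $\|\W\|_{L^\infty}\lesssim A$ and $1+\W$ stays away from zero. Nondegeneracy of the parametrization is part of the standing assumptions.
\item For $a$, the relevant fact is the Taylor sign condition $a\ge 0$, taken over from the real-line argument of \cite{HIT}. One writes $a = i(\bar P[\bar R R_\alpha] - P[R\bar R_\alpha])$. For holomorphic $R$, paraproduct reasoning on the Fourier side shows that $a$ is a positive quadratic form: only the interactions $\bar R_\lambda R_\mu$ with the appropriate frequency ordering survive. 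This is a statement about lattice frequencies through the sign of $\xi_\alpha$ only, so it transfers verbatim to $\T^d$.
\item Upper bound: $\|a\|_{L^\infty}\lesssim \|\langle D_\alpha\rangle^{1/2}R\|_{H^{s-1/2}}^2\lesssim A^2$, using the Sobolev embedding stated above.
\end{itemize}
With these, the $w$ part of $E_{\lin}$ is comparable to $\|w\|_{L^2}^2$. For the $r$ part, holomorphy gives $\Im\int r_\alpha\bar r = \sum_{\bds j} |\xi_\alpha|\,|\hat r_{\bds j}|^2$, because $\xi_\alpha\le 0$ on the support of $\hat r$ (the sign convention is to be checked). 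Adding $\int |r|^2$ then yields $\|r\|_{H^{0,1/2}}^2$. This gives \eqref{coercivity-lin}.

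\textbf{Energy identity.} Differentiate $E_{\lin}$ in time and use $\partial_t = D_t - b\partial_\alpha$. Integrating by parts, the transport terms contribute $\int b_\alpha(\cdots)$, which is bounded by $\|b_\alpha\|_{L^\infty}\|(w,r)\|_{\H^0}^2\lesssim_A B\|(w,r)\|_{\H^0}^2$.

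There is one subtle point here. The $r$-transport term is $\Im\int (b r_\alpha)_\alpha \bar r + \dots$. After integration by parts it reduces to $\int b_\alpha\, \Im(r_\alpha\bar r)$. That expression is not directly $L^2$-controlled, so it must be handled as a commutator $[\langle D_\alpha\rangle^{1/2}, b]$ using the quasiperiodic Littlewood--Paley decomposition. The needed commutator bound is $\|[\langle D_\alpha\rangle^{1/2},b]\partial_\alpha r\|_{L^2}\lesssim \|b_\alpha\|_{L^\infty}\|r\|_{H^{0,1/2}}$. Because the symbols depend only on $\xi_\alpha$, this reduces to a Coifman--Meyer type estimate in the single frequency variable $\xi_\alpha$, uniformly in the transverse modes.

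The time derivative of the weight, $D_t[(1+a)|1-Y|^2]$, is bounded in $L^\infty$ by $C(A)B$. This uses the evolution equations \eqref{eq:W:R} for $D_t\W$, together with the bound $\|D_t a\|_{L^\infty}\lesssim_A B$ from \cite{HIT}, which is re-proved with the quasiperiodic multilinear estimates.

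\textbf{Main obstacle: cancellation of the leading terms.} The principal contributions are
\[
-2\Re\int (1+a)|1-Y|^2\,\bar w\,(1-\bar Y)(1+\W_\alpha)\,r_\alpha \qquad\text{and}\qquad 2\Im\int D_t r_\alpha\,\bar r\ \ (\text{plus conjugates}),
\]
where $D_t r = i(1+a)(1-Y)^2 w$. These should cancel up to commutator errors. The cancellation relies on:
\begin{itemize}
\item holomorphy, via $\bar P^\sharp r = 0$ and $P$-projection identities;
\item a careful treatment of the weight, placing it partly on $w$ and partly on $r$.
\end{itemize}
What remains consists of terms like $\int \bar P[\text{coeff}]\,\bar w\, r_\alpha$, where the coefficient's antiholomorphic part absorbs the derivative. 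These are bounded by $\|\partial_\alpha^{1/2}\text{coeff}\|_{L^\infty}$-type norms, which are $\lesssim_A B$.

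I expect this step, together with bounding the source terms $f$ and $g$, to be the hardest. In particular, $\delta M$ contains $\bar w_\alpha$ and $\bar r_\alpha$, which are a full derivative too high. These terms can only be controlled through the projection $P$ applied to products of an antiholomorphic high-frequency factor with a low-frequency coefficient. I would first estimate them with the quasiperiodic bilinear bound
\[
\|P[\bar u_\alpha v]\|_{L^2}\lesssim \|u\|_{L^2}\,\|v_\alpha\|_{L^\infty},
\]
which shifts the derivative onto the low-frequency factor via Littlewood--Paley trichotomy in $\xi_\alpha$. For $\delta a$, the analogous bound measured in $H^{0,1/2}$ would then close \eqref{ee-lin}.
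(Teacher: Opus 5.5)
Your coercivity argument is fine and matches what the paper uses. It rests on positivity of $a$ via the Fourier representation, together with $-\Im\int r_\alpha\bar r=\sum|\xi_\alpha||\hat r_{\bds j}|^2$ for holomorphic $r$; the coercive sign is the one in $E_0$.

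\textbf{The gap: the source terms.} This is where the paper's proof of this theorem actually does its work: it reduces to Theorem~\ref{plin-reduced} and proves $\|(P^\sharp f,P^\sharp g)\|_{\H^0}\lesssim_A B\|(w,r)\|_{\H^0}$. Your tool for this, $\|P[\bar u_\alpha v]\|_{L^2}\lesssim\|u\|_{L^2}\|v_\alpha\|_{L^\infty}$, cannot close under $s>\frac{d+1}{2}$.
\begin{itemize}
\item The control parameter $B$ bounds $R_\alpha$ and $\langle D_\alpha\rangle^{1/2}\W$ in $L^\infty$, but not $\W_\alpha$ or $Y_\alpha$, since $H^{s-1}(\T^d)\not\subset L^\infty$.
\item Consequently, for $P[\bar r_\alpha Y]$ or $P[R(1-\bar Y)^2\bar w_\alpha]$ in $\delta M$, the quantity $\|v_\alpha\|_{L^\infty}$ is not controlled.
\item The same obstruction appears in $\W_\alpha\,\delta b$ and $R_\alpha\,\delta b$, which you do not treat at all.
\item Your frequency picture is also reversed. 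The projection $P$ forces the holomorphic factor ($R$ or $Y$) to sit at the \emph{higher} $\alpha$-frequency, so the derivative is moved onto the high-frequency factor. In any case, a Littlewood--Paley trichotomy with only an $L^\infty$ norm would not sum the comparable-frequency interactions.
\end{itemize}

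The missing idea is the half-derivative bookkeeping of $\H^0$.
\begin{itemize}
\item Put a full derivative only on $R$, using the anisotropic Bernstein bound $\|P^\alpha_\lambda R\|_{L^\infty}\lesssim\lambda^{\frac{d-1}{2}-s}\|R\|_{H^{s,\frac12}}$. This leaves the summable factor $\lambda^{\frac{d+1}{2}-s}$.
\item Otherwise split the derivative: half on $r\in H^{0,\frac12}$ and half on $Y$.
\item Prove $\|\delta b\|_{H^{0,\frac12}}\lesssim_A A\|(w,r)\|_{\H^0}$ by placing the output half-derivative on the highest-frequency holomorphic factor.
\item Then handle $\W_\alpha\delta b$ with $\|fu\|_{L^2}\lesssim\|f\|_{H^{s-1}}\|u\|_{H^{0,\frac12}}$, and $R_\alpha\delta b$ with $\|fu\|_{H^{0,\frac12}}\lesssim\|f\|_{H^{s-\frac12}}\|u\|_{H^{0,\frac12}}$.
\end{itemize}

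\textbf{The energy identity.} Your direct route via the unprojected system is legitimate and simpler than you expect, but you misjudge it in two places.
\begin{itemize}
\item Since $(1-Y)(1+\W)=1$, we have $(1+a)|1-Y|^2(1-\bar Y)(1+\W)=(1+a)(1-\bar Y)^2$. With the sign $-\Im(r_\alpha\bar r)$, the two leading terms therefore cancel exactly, with no commutator remainder.
\item The transport contribution to $\frac{d}{dt}\int\Im(r_\alpha\bar r)$ is $-2\int b\,\Im|r_\alpha|^2=0$, not $\int b_\alpha\Im(r_\alpha\bar r)$. So the commutator $[\langle D_\alpha\rangle^{1/2},b]$ is unnecessary. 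Its reduction ``uniformly in the transverse modes'' would also not be valid, since multiplication by $b$ couples all lattice modes.
\end{itemize}
The paper instead works with the projected system and pays with the commutators $[b,P^\sharp]\partial_\alpha$ and $[(1-\bar Y)(1+\W),P^\sharp]\partial_\alpha$, because that version is reused in the higher-order energy and iteration arguments.

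\textbf{A caveat on the statement itself.} The derivative $\frac{d}{dt}\int|r|^2$ contains $-2\Im\int(1+a)(1-Y)^2w\bar r$. This term is only $O_A(\|w\|_{L^2}\|r\|_{L^2})$ and is nonzero even at $\W=R=0$. The $|r|^2$ term cannot be dropped, because $-\Im\int r_\alpha\bar r$ does not control $\|r\|_{L^2}$ when $\xi_\alpha$ accumulates at $0$. So the provable bound has $1+B$ in place of $B$. This is harmless for Gronwall, and the paper glosses over the same point.
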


One consequence of this result is the  $\H^0$ well-posedness for the linearized flow,
whenever the control parameters $A$ and $B$ remain bounded.

Here we remark that $w$ and $r$ are restricted to the class of holomorphic functions, which further have zero average.
Because of this, we can harmlessly insert projections both on the left and on the right of (\ref{lin(hwhr)}).
The projected equations are equivalent to the full equations, so this makes no difference 
for the above result. However, it becomes important when we try to treat the RHS 
perturbatively and think of  the linearized  equation (\ref{lin(hwhr)}) as a perturbation of the following evolution
\begin{equation}\label{lin(hwhr)usf}
\left\{
\begin{aligned}
 & \mD_t w + P^\sharp[(1 - \bar Y)(1 + W_\alpha) r_\alpha ]
 = P^\sharp[f],
\\
&\mD_t r   -iP^\sharp[(1 + a)(1 - Y)^2w] = P^\sharp[g],
\end{aligned}
\right.
\end{equation}
where $\mD :=P^\sharp[D_t]$. We will refer to this system as the reduced linearized equation. Here the projectors ensure that the solutions remain in the space of holomorphic functions with zero mean.
The full linearized equation \eqref{lin(hwhr)} corresponds to taking the source terms $(f,g)$ 
as in \eqref{e:source}.

\begin{remark}
The use of the projectors $P^\sharp$  in the above projections is justified by the fact that for the full linearized equations we seek to view the flow as an evolution in the space of 
zero-average  holomorphic functions. We note 
that this would no longer be possible at the level of the undifferentiated equations, where, 
as explained in \cite{HIT}[Appendix A], one needs to use the projectors $P^i$, respectively $P^r$ in the two equations.
\end{remark}

With this setup, Theorem~\ref{plin-short} will be 
interpreted as a consequence of the theorem below for the reduced linearized equation:

\begin{theorem}
\label{plin-reduced}
There exists an energy functional $E_{lin}(w,r)$ for the reduced linearized equation, so that the followings hold

\begin{enumerate}
    \item Coercivity
\begin{equation}
\label{coercivity}
 E_{lin}(w,r) \approx \| (w,r)\|_{\H^0}^2;
\end{equation}
\item Growth bound
\begin{equation}
\label{ee}
   \left|\frac{d}{dt}  E_{lin}(w,r)\right| \lesssim_A B \| (w,r)\|_{\H^0}^2 + \| (P^\sharp f,P^\sharp g)\|_{\H^0}.
\end{equation}
\end{enumerate}
\end{theorem}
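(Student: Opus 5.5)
The plan is to use the same quadratic form as in \eqref{eq:used-energy}, namely
\[
E_{lin}(w,r) = \int_{\T^d} (1+a)|1-Y|^2|w|^2 + \Im(r_\alpha \bar r) + |r|^2\, d\balpha ,
\]
now evaluated on solutions of the reduced system \eqref{lin(hwhr)usf}, and to prove the two parts separately.

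\textbf{Coercivity.} Since $r$ is holomorphic with zero mean, its Fourier support lies in $\xi_\alpha<0$. Hence $\Im\int r_\alpha\bar r\,d\balpha = \sum |\xi_\alpha||\hat r_{\bds j}|^2$ up to sign conventions, and with the $|r|^2$ term this gives exactly $\|r\|_{H^{0,\frac12}}^2$. For the $w$ term we need the Taylor sign bound $1+a\gtrsim 1$ and $|1-Y|^2 = |1+\W|^{-2}\approx_A 1$. The bound on $|1-Y|$ comes from $\|\W\|_{L^\infty}\lesssim A$ together with the standing nondegeneracy assumption that $1+\W$ stays away from zero. For $a$ I would reproduce the argument of \cite{HIT} on the torus. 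Writing $a=i(\bar P[\bar R R_\alpha]-P[R\bar R_\alpha])$, the Fourier coefficients of $a$ are sums over pairs of frequencies of $R$ with weight $\min(|\xi_\alpha|,|\eta_\alpha|)$. This weight is a positive kernel, so $a\ge 0$. A Sobolev/Coifman–Meyer bound from Section~\ref{s:multi} then gives $\|a\|_{L^\infty}\lesssim_A 1$. Together these give \eqref{coercivity}.

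\textbf{Growth bound.} Differentiate $E_{lin}$ in time and substitute the equations. I would treat $\mD_t = P^\sharp D_t$ by writing $D_t = \mD_t + (1-P^\sharp)(b\partial_\alpha)$ acting on holomorphic functions. The antiholomorphic part $\bar P^\sharp[b\,w_\alpha]$ equals $\bar P^\sharp[b^{+}w_\alpha]$, where $b^+$ is the holomorphic part of $b$, so it costs no derivative. It is bounded in $L^2$ by $\|\partial_\alpha b\|_{L^\infty}\|w\|_{L^2}\lesssim_A B\|w\|_{L^2}$ via the commutator estimates of Section~\ref{s:multi}.

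The transport contributions $\int b\,\partial_\alpha(\cdot)$ integrate by parts to $-\tfrac12\int b_\alpha(\cdot)$, each bounded by $\|b_\alpha\|_{L^\infty}\lesssim_A B$. The time derivative of the weight is handled with the identities $D_t a$ and $D_t Y$ computed from \eqref{eq:W:R}. Here $\|D_t a\|_{L^\infty}+\|D_t Y\|_{L^\infty}\lesssim_A B$ from the water-wave estimates of Section~\ref{s:water}.

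The key cancellation is between the two principal terms. On one side, $-2\Re\int (1+a)|1-Y|^2\bar w\,P^\sharp[(1-\bar Y)(1+\W)r_\alpha]$. On the other, the contribution of $iP^\sharp[(1+a)(1-Y)^2 w]$ to $\frac{d}{dt}\Im\int r_\alpha\bar r$, which equals $2\Re\int i(1+a)(1-Y)^2 w\cdot \overline{ i\,\cdot}$, i.e. a term of the form $2\Re\int (1+a)(1-Y)^2 w\,\bar r_\alpha$ after using holomorphy. Since $(1-\bar Y)(1+\W)\cdot|1-Y|^2 = (1-Y)(1-\bar Y)\cdot(1-\bar Y)(1+\W)$ and $(1-Y)(1+\W)=1$, the coefficients match exactly. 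So the sum vanishes up to commutators $[P^\sharp,(1+a)(1-Y)^2]$ applied to $w$ and paired with $r_\alpha$.

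These commutators are the main obstacle. One needs
\[
\|\langle D_\alpha\rangle^{\frac12}[P^\sharp, c]\,w\|_{L^2}\lesssim \|\langle D_\alpha\rangle^{\frac12} c\|_{BMO\text{-type}}\|w\|_{L^2}
\]
with $c=(1+a)(1-Y)^2-1$, controlled by $B$. I would prove this with the quasiperiodic Littlewood–Paley pieces $P_\lambda$ of \S\ref{ss:LP:quasi}. In the low–high and high–high paraproduct pieces only frequencies where $P^\sharp$ fails to commute survive, which forces the high frequency to sit on $c$. Bernstein in the $\alpha$-direction only is unavailable in $L^\infty$ on $\T^d$. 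Instead, the $L^\infty$ control of $\langle D_\alpha\rangle^{1/2}\W$ and $\partial_\alpha R$ given by $s>\frac{d+1}{2}$ supplies the needed bound.

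The remaining lower-order pieces, and the source terms $(P^\sharp f,P^\sharp g)$, are bounded by Cauchy–Schwarz in $\H^0$ against $E_{lin}^{1/2}$. This yields \eqref{ee}, where the source contribution carries the factor $\|(w,r)\|_{\H^0}$.
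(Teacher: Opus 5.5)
\textbf{Comparison with the paper.} Your plan follows the paper's proof. It uses the same energy, replaces $\mD_t$ by $D_t$ up to the commutator $[b,P^\sharp]\partial_\alpha w$, uses the same bounds on $b_\alpha$ and on $D_t((1+a)|1-Y|^2)$, and cancels the two principal terms up to a commutator. You move $P^\sharp$ onto $w$ and estimate $[P^\sharp,c]w$ in $H^{0,\frac12}$, whereas the paper estimates $[P^\sharp,(1-\bar Y)(1+\W)]r_\alpha$ in $L^2$ via \eqref{com2}. These are dual forms of the same bound.

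In both commutators the coefficient must be measured in a Sobolev norm, $\|b\|_{H^{s,\frac12}}$ or $\|c\|_{H^s}$, through Lemma~\ref{lem:Bernstein}. The dyadic sum needs $\sum_\lambda \lambda\|c_\lambda\|_{L^\infty}^2<\infty$. A pointwise bound on $b_\alpha$ or on $\langle D_\alpha\rangle^{\frac12}c$, which is what you invoke, does not give this.

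Two details also need fixing.
\begin{enumerate}
\item The sign that is coercive, and that makes the principal terms cancel, is $\Im(r\bar r_\alpha)=-\Im(r_\alpha\bar r)$, as in $E_0$. With the other sign the principal terms add.
\item The transport part of $\frac{d}{dt}\int\Im(r\bar r_\alpha)$ should not be integrated by parts against $b_\alpha$. It vanishes identically, because $P^\sharp$ can be dropped against $\bar r_\alpha$ and $b$ is real.
\end{enumerate}

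\textbf{The gap.} The genuine gap, which the paper's proof also passes over, is the $\int|r|^2$ part of the energy. Its time derivative contains
\[
2\Re\int i(1+a)(1-Y)^2w\,\bar r\,d\balpha .
\]
Cauchy--Schwarz bounds this only by $C(A)\|w\|_{L^2}\|r\|_{L^2}$, with no factor of $B$. At the zero solution it equals $-2\Im\int w\bar r\,d\balpha$, which is nonzero in general. So your step ``the remaining lower-order pieces are bounded by Cauchy--Schwarz; this yields \eqref{ee}'' fails.

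No choice of energy repairs this, so \eqref{ee} as stated is false here. Around the zero solution each mode obeys $\partial_t^2\hat r_{\bds j}=-|\xi_\alpha|\hat r_{\bds j}$. Since $\la\bds j,\bds k\ra$ is dense in $\R$ for $d\ge2$, we can pick a lattice point with $-1\ll\xi_\alpha<0$. Take data $\hat w_{\bds j}(0)=1$, $\hat r_{\bds j}(0)=0$ there. The solution reaches $|\hat r_{\bds j}|=|\xi_\alpha|^{-\frac12}$, which contradicts conservation of any energy equivalent to $\|(w,r)\|_{\H^0}^2$.

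What your argument, like the paper's, actually proves is
\[
\Big|\frac{d}{dt}E_{lin}(w,r)\Big|\lesssim_A(1+B)\|(w,r)\|_{\H^0}^2+\|(P^\sharp f,P^\sharp g)\|_{\H^0}\|(w,r)\|_{\H^0}.
\]
This still suffices for Gronwall on a bounded time interval, and hence for every later use of the theorem in the paper.
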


In principle, this should also imply $\H^0$ well-posedness for the reduced linearized flow. Proving this would require interpreting the adjoint reduced linearized equation as a perturbation of the reduced linearized equation,
which therefore satisfies similar bounds.

\begin{proof}[Proof of Theorem~\ref{plin-reduced}]
We compute the time derivative of the linearized energy $E_{lin}$ 
for solutions to \eqref{lin(hwhr)usf}
with $( f, g) = 0 $. For the $w$ term we 
substitute the projected material derivative 
with the full material derivative modulo a commutator term, obtaining
\begin{equation}\label{dt-w}
\begin{aligned}
\frac{d}{dt}  \int_{\mathbb{T}^d} (1+a) |1-Y|^2 |w|^2   \, d\balpha = & \ 2 \Re \int_{\T^d} \bar w (1+a)|1-Y|^2
[b,P^\sharp] \partial_\alpha w \, d \balpha
\\ & + \int_{\T^d}  b_\alpha (1+a) |1-Y|^2 |w|^2   \, d\balpha 
\\ &  +\int_{\T^d} D_t ((1+a)|1-Y|^2) |w|^2\, d\balpha
\\ & - 2 \Re \int_{\T^d}(1+a) |1-Y|^2 \bar w P^\sharp[(1-\bar Y)(1+\W) r_\alpha] \, d\balpha
\\ & +  2 \Re \int_{\T^d}(1+a) |1-Y|^2 \bar w P^\sharp f \, d\balpha .
\end{aligned}
\end{equation}
For both $r$ terms we also 
substitute the projected material derivative 
with the full material derivative modulo a commutator term, but this time we can freely drop the projection so there is no commutator term,
\begin{equation}\label{dt-ralpha}
\begin{aligned}
\frac{d}{dt}  \int_{\mathbb{T}^d}
\Im ( r_\alpha \bar r) \, d\balpha = & \ 
 2\Re \int_{\mathbb{T}^d}    \bar{P}^\sharp\left[ (1+a)(1-\bar{Y})^2 \bar{w} \right]r_\alpha 
 + 2\Im ( r_\alpha \bar g) \, d \balpha,
\end{aligned}
\end{equation}

\begin{equation}
\begin{aligned}
\frac{d}{dt}  \int_{\mathbb{T}^d}
\vert r\vert^2 \, d\balpha =\  & 2\Re \int_{\mathbb{T}^d}\left( 
\mD_t r\,  - br_{\alpha} \right) \bar{r}   \, d\balpha \\
=\  &2\Re \int_{\mathbb{T}^d}
\left( P^{\sharp} [g] +i\left[ (1+a)(1-Y)^2w\right]  \right) \bar{r}  \, d\balpha  + \int_{\mathbb{T}^d} b_{\alpha} \vert r\vert^2 \, d\balpha .
\end{aligned}
\end{equation}

To estimate the time derivative of the energy 
we consider each of the terms above separately, 
except for the fourth term in \eqref{dt-w}
and the first term in \eqref{dt-ralpha}, which we 
cancel modulo a commutator,
\[
-2 \Re \int_{\T^d} (1+a) |1-Y|^2 \bar w \cdot [P^\sharp,(1-\bar Y)(1+\W)] r_\alpha \, d\balpha.
\]

We need several uniform bounds,
namely 
\begin{equation}
\label{b-est0}
\|b_\alpha \|_{L^\infty} \lesssim_A B,
\qquad \|D_t((1+a)|1-Y|^2) \|_{L^\infty}
\lesssim_A B,
\end{equation}
as well as two commutator bounds,
\begin{equation}
\label{b-est}
\| [b,P^{\sharp}]\partial_\alpha\|_{L^2 \to L^2}
\lesssim_A B, 
\end{equation}
\begin{equation}\label{YW-com}
   \| [(1-\bar Y)(1+\W),P^{\sharp}]\partial_\alpha\|_{H^{0,\frac12} \to L^2}\lesssim_A B.  
\end{equation}

The first bound in \eqref{b-est0} follows directly from Lemma \ref{lem:b}.  
The second bound requires the material derivative of $a$, which is a direct application of part (v) of Lemma~\ref{lem:a}, and the material derivative of $Y$, or equivalently $\W$, which can be estimated directly from the $\W$ equation. 

We now consider the bounds in \eqref{b-est}. The commutator bound involving $b$ can be obtained as a consequence of Lemma~\ref{l:comb} and Lemma~\ref{lem:b}. Finally, for \eqref{YW-com} we estimate
\[
\| (1-\bar Y)(1+\W) - 1\|_{H^s}
\lesssim_A B  
\]
and then use the second part of Lemma~\ref{l:comb}.

Next we consider the contribution of the source terms to the time derivative of the energy, namely   
\begin{equation}
\int_{\T^d}2(1+a)| 1-Y|^2\Re \left\{ P^\sharp [f] \bar{w}\right\} + 2\Im \left\{ P^\sharp [g] \bar{r}_{\alpha}\right\} +2\Re \left\{ P^{\sharp} [g]\bar{r} \right\}\, d\balpha.
\label{e}
\end{equation}
This can be estimated directly as needed in the theorem.
\end{proof}

We now return to prove Theorem~\ref{plin-short}, which concerns the fully linearized equation,
and show that it follows from Theorem~\ref{plin-reduced}.

\begin{proof}[Proof of Theorem~\ref{plin-short}]
By Theorem~\ref{plin-short}, it will suffice to prove  that the source terms $(f,g)$ in \eqref{e:source} satisfy the bounds
\begin{equation}\label{lin-source}
\|(P^{\sharp}  f,P^{\sharp}  g)\|_{\H^0} 
\lesssim_ A B \|(w, r)\|_{\H^0} .
\end{equation}
We begin by inspecting each term individually.

\bigskip

\emph{ The bound for $f$.}
For the first term in $ f$ we simply use Sobolev embeddings
\[
\| (1 - \bar Y) R_\alpha  w\|_{L^2}
\lesssim \| (1 - \bar Y) R_\alpha\|_{L^\infty}
\|  w\|_{L^2} \lesssim (1+A)B \| w\|_{L^2}.
\]
The second term in $ f$ is similar and the third term can be estimated using Lemma \ref{lem:M}.

For the last term in \eqref{lin(hwhr)} we need the estimate
\[
\| \delta M\|_{L^2} \lesssim_A B \|( w, r)\|_{\H^0}.
\]
Recall that
  \[
\delta M = 2\Re P[ r \bar Y_\alpha -2 R(1 - \bar Y) \bar{ w} \bar Y_\alpha + R (1 - \bar Y)^2 \bar{ w}_\alpha - \bar{ r}_\alpha Y - \bar R_\alpha (1 - Y)^2 w].
\]
The fifth term can be bounded using a standard bilinear estimate.  The 
others are similar to each other. We discuss the third, which is most interesting. Using a Littlewood-Paley decomposition in the $\alpha$ direction, we can write
\[
P[ R (1 - \bar Y)^2 \bar{ w}_\alpha]
= \sum_{\mu \leq \lambda}
P[ R_\lambda (1 - \bar Y)^2 \bar{ w}_{\mu,\alpha}],
\]
using the fact that $\bar Y$ is antiholomorphic so the $R$ frequency in the $\alpha$ direction must be at least as large as that of $w$.
Then we estimate in $L^2$ using Bernstein's inequality \eqref{b2} for $R_\lambda$, and pointwise bounds for $Y$ as follows,
\[
\begin{aligned}
\| P[ R (1 - \bar Y)^2 \bar{w}_\alpha]\|_{L^2} &\lesssim_A \sum_{\mu \leq \lambda}
\|R_\lambda\|_{L^\infty} \mu \|\bar{w}_{\mu}\|_{L^2} \\
&\lesssim_A \sum_{\mu \leq \lambda}
\lambda^{-1}\|R_\lambda\|_{H^{s,\frac12}} \mu \|{ w}_{\mu}\|_{L^2} \\
&\lesssim_A \|R\|_{H^{s,\frac12}}\|{ w}\|_{L^2}.
\end{aligned}
\]
It remains to consider the fourth term in $ f$, namely
$ \W_{\alpha} \delta b $. We recall that 
\[
\delta b =2\Re P[(1 - \bar Y)  r - (1 - \bar Y)^2 R \bar{ w}].
\]
We can estimate $\delta b$ in $H^{0,\frac12}$,
\begin{equation}\label{deltab}
\| \delta b\|_{H^{0,\frac12}} \lesssim_{A} 
A  \|( w, r)\|_{\H^0}.
\end{equation}
To see this we consider for instance the second term, where we use again a Littlewood-Paley decomposition in the $\alpha$ direction to write 
\[
P_\lambda P[ (1 - \bar Y)^2 R \bar{  w}]
= \sum_{\mu \geq \lambda}P[ (1 - \bar Y)^2 R_\mu \bar{ w}].
\]
Then we estimate it by 

\begin{align*}
\| P[ (1 - \bar Y)^2 R \bar{  w}]\|_{H^{0,\frac12}} \lesssim_A 
 &
\sum_{\mu \geq \lambda} \lambda^{\frac12}\| R_\mu\|_{L^\infty} \| w\|_{L^2}\\
&\lesssim_A 
\| w\|_{L^2} \sum_{\mu \geq \lambda} \lambda^{\frac12} \mu^{\frac{d}2-s}   \| R_\mu\|_{H^{s-\frac12, \frac12}}  \\
&\lesssim_A \| R\|_{H^{s-\frac12,\frac12}} \| w\|_{L^2},
\end{align*}
where we use Lemma \ref{lem:Bernstein} in the second inequality.
The first term in $\delta b$ is estimated in a similar manner, and \eqref{deltab} follows. 
It remains to show that 
\begin{equation}\label{tricky-bi}
\| \W_{\alpha} \delta b\|_{L^2} 
\lesssim \| \delta b\|_{H^{0,\frac12}}
\|\W\|_{H^s}.
\end{equation}

Here $\W_\alpha \in H^{s-1}$ (controlled by $B$) with $s > \frac{d+1}2$. 
This follows from Lemma~\ref{l:comb1}.

\bigskip

\emph{ The bound for $g$.}
 For the first term we need to show that 
\begin{equation}\label{tricky-bi1}
\|R_\alpha \delta b\|_{H^{0,\frac12}} 
\lesssim \| R_\alpha\|_{H^{s-\frac12}}\| \delta b\|_{H^{0,\frac12}},
\end{equation}
which is similar to \eqref{tricky-bi}, using Lemma~\ref{l:comb1}.

For the second term in $ g$ we  
dispense with the $1-Y$ prefactor using \eqref{tricky-bi1}. Then it remains to show
that 
\[
\|\delta a\|_{H^{0,\frac12}} \lesssim B \|r\|_{H^{0,\frac12}}.
\]
We recall that 
\begin{equation*}
\delta a =  2 \Im P[\bar R_\alpha r + R\bar r_\alpha].
\end{equation*}
The first term satisfies the desired estimate due to  Lemma \ref{l:comb1}. We rewrite the second term as
\[
P[R \bar r_{\alpha}]= [R,P]\partial_\alpha \bar r,
\]
and estimate it using the commutator estimate (\ref{com3}) in Lemma \ref{l:comb}.

\end{proof}

\subsection{ Difference bounds and uniqueness}

Here we consider estimates for differences of solutions in the weaker topology $\H^0$. While 
morally equivalent to the estimates for the linearized equation, directly formalizing this relationship is not entirely straightforward. 
So instead we largely repeat the analysis for the linearized equation.

\begin{theorem}\label{thm:diff}
    Let $(\W^1,R^1)$ and $(\W^2,R^2)$ be two solutions for the differentiated water wave system \eqref{eq:W:R} with joint control parameters $A : = A_1+A_2$ and $B := B_1+B_2$. Then we have 
\begin{equation}
\| (\W^1-\W^2,R^1-R^2)(t)\|_{\H^0}
\lesssim \| (\W^1-\W^2,R^1-R^2)(0)\|_{\H^0}
e^{C(A) \int_0^t B(s) ds}.
\end{equation}
\end{theorem}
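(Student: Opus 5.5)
The plan is to set $(w,r) := (\W^1-\W^2, R^1-R^2)$ and derive for it an equation of the same form as the linearized system \eqref{lin(hwhr)}, with coefficients taken from one of the two solutions and modified source terms. Subtracting the two copies of \eqref{eq:W:R} and writing every nonlinear difference $F(\W^1,R^1)-F(\W^2,R^2)$ through the algebraic identity $xy - x'y' = (x-x')y + x'(y-y')$ gives
\begin{equation}
\left\{
\begin{aligned}
&D_t^{(1)} w + (1-\bar Y^1)(1+\W^1)\, r_\alpha = \tilde f,\\
&D_t^{(1)} r - i(1+a^1)(1-Y^1)(1-Y^2)\, w = \tilde g,
\end{aligned}
\right.
\end{equation}
where $D_t^{(1)} = \partial_t + b^1\partial_\alpha$. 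The terms $(b^1-b^2)\W^2_\alpha$ and $(b^1-b^2)R^2_\alpha$ go into the sources. They play the role of $\W_\alpha\delta b$ and $R_\alpha\delta b$, with $b^1-b^2$ playing the role of $\delta b$. Similarly, $a^1-a^2$ and $M^1-M^2$ replace $\delta a$ and $\delta M$; they are bilinear expressions in which one factor is a difference and the other factor comes from either solution. One small asymmetry should be noted: the coefficient of $w$ in the second equation is $(1-Y^1)(1-Y^2)$ rather than $(1-Y^1)^2$. I would absorb the discrepancy $(1+a^1)(1-Y^1)(Y^1-Y^2)$ into $\tilde g$, since $Y^1-Y^2$ is bounded in $L^2$ by $w$ times an $L^\infty$ factor.

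Next, I would use the energy $E_{\lin}(w,r)$ from \eqref{eq:used-energy}, with weights $(1+a^1)|1-Y^1|^2$ built from solution 1, or symmetrized as the average of the two choices. Since $w,r$ are differences of holomorphic zero-mean functions, they are themselves holomorphic with zero mean. So the proof of Theorem~\ref{plin-reduced} applies verbatim: coercivity \eqref{coercivity} with constants depending on $A$, and the growth bound \eqref{ee} with the commutator bounds \eqref{b-est}, \eqref{YW-com} and the pointwise bounds \eqref{b-est0}, all for the coefficients of solution 1. This reduces the theorem to the source estimate
\[
\|(P^\sharp\tilde f, P^\sharp \tilde g)\|_{\H^0} \lesssim_A B\,\|(w,r)\|_{\H^0},
\]
the analogue of \eqref{lin-source}. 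Given this, one gets $\frac{d}{dt}E \lesssim_A B\,E$, and Gronwall together with coercivity yields the stated exponential bound.

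For the source bounds I would follow the proof of Theorem~\ref{plin-short} term by term, with two kinds of factor. Terms of the form (coefficient)$\times w$ or (coefficient)$\times r$ are handled by $L^\infty$ bounds on the coefficients (via Sobolev embedding, controlled by $A$ or $B$). Terms like $P[R^2(1-\bar Y^2)^2\,\overline{(Y^1-Y^2)}_\alpha]$, where a derivative falls on a difference, are treated by the same Littlewood--Paley argument in the $\alpha$ direction as the third term in $\delta M$. This argument exploits holomorphy, which forces the undifferentiated factor to be at higher frequency. The bound $\|b^1-b^2\|_{H^{0,\frac12}}\lesssim_A \|(w,r)\|_{\H^0}$ is proved as in \eqref{deltab}, and the products with $\W^2_\alpha$ and $R^2_\alpha$ use Lemma~\ref{l:comb1} as in \eqref{tricky-bi} and \eqref{tricky-bi1}. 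The difference $a^1-a^2$ contains $P[R^2\,\bar r_\alpha]=[R^2,P]\partial_\alpha\bar r$, which is handled by the commutator estimate of Lemma~\ref{l:comb}.

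I expect the main obstacle to be the differences of the rational quantities $Y^j=\W^j/(1+\W^j)$ and the term $Y^1-Y^2$ with a derivative, which enters through $M^1-M^2$. Here I would write $Y^1-Y^2 = w/((1+\W^1)(1+\W^2))$. To place the derivative favorably, I would need a Moser-type bound showing that multiplication by $(1+\W^1)^{-1}(1+\W^2)^{-1}$ is bounded on $L^2$ and interacts with the frequency-localization argument, with an error term controlled by $B$. This part is routine in principle, but it is where the bookkeeping is most delicate.
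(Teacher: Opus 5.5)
\textbf{There is a genuine gap, at exactly the one new point of the difference estimate.} You correctly find that the exact coefficient of $w$ in the $r$-equation is $(1+a^1)(1-Y^1)(1-Y^2)$. Replacing it by $(1+a^1)(1-Y^1)^2$, which the cancellation inside $E_{\lin}$ requires, leaves the term $(1+a^1)(1-Y^1)(Y^1-Y^2)\,w$ in $\tilde g$. But $\tilde g$ is the source of the $r$-equation, and the second component of $\H^0$ is $H^{0,\frac12}$, so this term must be bounded in $H^{0,\frac12}$, not $L^2$.

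Your justification (an $L^2$ bound for $Y^1-Y^2$ times an $L^\infty$ factor) only gives $L^2$ control. No argument that treats the term as (bounded coefficient)$\times w$ can do better. Since $\|(w,r)\|_{\H^0}$ controls $w$ only in $L^2$, a bound $\|c\,w\|_{H^{0,\frac12}}\lesssim \|w\|_{L^2}$ would gain half an $\alpha$-derivative, which is false already for $c\equiv 1$ and $w$ at high $\alpha$-frequency. Keeping the exact coefficient and changing the energy weight does not help either. The balancing weight would be $(1+a^1)(1-Y^1)(1-\bar Y^2)$, which is not real, and the leftover in the energy identity is again quadratic in $w$, so it must be handled the same way.

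\textbf{How to repair it.} The estimate works because the term is quadratic in $w$. Using $Y^1-Y^2=(1-Y^1)(1-Y^2)w$, it equals $(1+a^1)(1-Y^1)^2(1-Y^2)\,w^2$.
\begin{itemize}
\item The prefactor minus $1$ is bounded in $H^{s-\frac12}$ by the algebra property and Lemma~\ref{lem:a}(ii), so it can be discarded using \eqref{prod1}.
\item Since $H^{\frac12}\subset H^{0,\frac12}$, it then suffices to show $\|w^2\|_{H^{\frac12}}\lesssim \|w\|_{L^2}\|w\|_{H^s}$ for $s>\frac{d+1}2$.
\item This bound holds because both factors are the same function. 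In every dyadic interaction you can put the $H^s$ norm on the higher-frequency factor and the $L^2$ norm on the lower one. The analogous bound for $uv$ with only $u\in L^2$ and $v\in H^s$ is false (take $v\equiv1$).
\item Finally, $\|w\|_{H^s}\le \|\W^1\|_{H^s}+\|\W^2\|_{H^s}\lesssim B$, which gives the required $\lesssim_A B\|w\|_{L^2}$.
\end{itemize}
This is precisely the term the paper singles out as the only new ingredient.

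By contrast, the differences $b^1-b^2$, $a^1-a^2$, $M^1-M^2$, which you flag as the main obstacle, go through as in the linearized case once you write $Y^1-Y^2=(1-Y^1)(1-Y^2)w$ and use the algebra property. No additional Moser argument is needed there. With this repair, the rest of your outline (energy built from solution $1$, Theorem~\ref{plin-reduced}, then Gronwall) matches the paper's proof.
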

\begin{proof}
Subtracting the two sets of equations 
we obtain a system of equations for the difference, which is denoted by  $(w,r)$.
Introducing also the notations $\delta b$, $\delta a$ and $\delta M$ for the corresponding differences,  this system has the form
\begin{equation}\label{diff(hwhr)}
\left\{
\begin{aligned}
 &D^1_t w + (1 - \bar Y^1)(1 + \W^1) r_\alpha 
 = f, \\
&D_t^1 r  -i(1 + a^1)(1 - Y^1)^2 w = g,
\end{aligned}
\right.
\end{equation}
where the source terms $f$ and $g$ are given by 
\begin{equation}\label{diff:source}
\left\{
\begin{aligned}
&f =  - (1-\bar Y^2)R^2_\alpha w + (1-\bar Y^1)(1-\bar Y^2)(1+\W^1)R^2_\alpha \bar w + M^1 w -\W_\alpha^2 \delta b + (1+\W^2)\delta M, \\
&g = - R^2_\alpha \delta b - i(1-Y^2)\delta a
- i(1+a)(1-Y^1)^2(1-Y^2) w^2 . 
\end{aligned}
\right.
\end{equation}
By Theorem~\ref{plin-reduced},  the energy estimates for the difference follow by Gronwall's inequality provided we can prove a favourable estimate for the source terms,
\begin{equation}\label{diff-source}
\|(P^{\sharp}  f,P^{\sharp}  g)\|_{\H^0} 
\lesssim_ A B \|(w, r)\|_{\H^0} .
\end{equation}

We carefully wrote the expressions for
the source terms in a manner closely resembling
the linearized equation source terms. The estimates are also similar. 
In particular the difference estimates for  $\delta b$, $\delta a$ and $\delta M$ are identical to the ones proved for the linearized equation, and  in effect a direct consequence of those. Consequently, we focus on the single term that did not appear in the linearized equation, namely the last term in $g$, which is quadratic in $w$, and for which we need to show that 
\[
\| (1+a)(1-Y^1)^2(1-Y^2) w^2\|_{H^{0,\frac12}}
\lesssim_A B \|w\|_{L^2}.
\]
Here the $L^2$ norm is weaker than the 
output $H^{0,\frac12}$ norm, so a corresponding linear in $w$ bound would not work. It is of the essence then that this term is quadratic in $w$.

By algebra properties and Lemma~\ref{lem:a} (ii) the prefactor is harmless, 
\[
\| (1+a)(1-Y^1)^2(1-Y^2) - 1\|_{H^{s-\frac12}} \lesssim_A 1,
\]
and can be discarded by Lemma~\ref{l:comb1}.
Hence, since $H^{\frac12} \subset H^{0,\frac12}$, it suffices to show that 
\[
\| w^2\|_{H^{\frac12}}  \lesssim \|w\|_{L^2} \|w\|_{H^s}.
\]
Since $s > \frac{d+1}2$, this is now a standard 
bilinear estimate.
\end{proof}

\section{Higher order energy estimates} 
\label{s:ee}
The main goal of this section is to establish two energy bounds for
$(\W,R)$ and their higher derivatives. This is a large data result:

\begin{theorem}
\label{t:ee}
For each nonnegative integer $k\geq 1$ there exists an energy $E^k(\W,R)$ 
so that 

\begin{enumerate}
    \item Coercivity 
\begin{equation}\label{Ek-coercive}
 E^k(\W,R) \approx \| (\W,R)\|_{\H^k}^2.   
\end{equation}
\item Growth bound
\begin{equation}\label{Ek-dt}
   \left|\frac{d}{dt}  E^k(\W,R)\right| \lesssim_A B \| (\W,R)\|_{\H^k}^2. 
\end{equation}
\end{enumerate}
\end{theorem}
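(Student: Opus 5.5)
Following the strategy in \cite{HIT} and \cite{AIT}, we reduce the higher-order estimate to the linearized one, Theorem~\ref{plin-reduced}. The reason is that derivatives of solutions approximately solve the linearized equation.

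\textbf{Step 1: equations for derivatives.} Since the full torus Sobolev norm is involved, we cannot differentiate only in $\alpha$. We apply $\partial^\gamma$ with $|\gamma| = k$, where $\partial^\gamma$ is a product of the coordinate derivatives $\partial_1,\dots,\partial_d$ on $\T^d$. These commute with $P^\sharp$ and with $\partial_\alpha$. We set $(w,r) := (\partial^\gamma \W, \partial^\gamma R)$.

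Differentiating \eqref{eq:W:R}, the pair $(w,r)$ solves the linearized system \eqref{lin(hwhr)} with modified sources $(f^\gamma, g^\gamma)$. The highest-order terms, where all derivatives fall on $\W$ or $R$ inside $b$, $a$, $M$, $Y$, reproduce exactly the linearized operator; in particular they produce $\delta b$, $\delta a$, $\delta M$ evaluated at $(w,r)$.

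A naive linearization hides one problem. In $\partial^\gamma$ of $b\W_\alpha$, the term $(\partial^\gamma b)\W_\alpha$ is of highest order. Following \cite{AIT}, the remedy is a good-variable correction: use $w = \partial^\gamma\W - \partial^\gamma$-corrections such as $(1+\W)\,\partial^{\gamma}$ acting on an antiderivative-type quantity, or simply absorb this term into $-\W_\alpha\delta b$, which is already a source term in \eqref{e:source}. We take the second route, which is exactly why \eqref{e:source} contains $-\W_\alpha\delta b$. Likewise, $\partial^\gamma$ of $i\W/(1+\W)$ produces $i(1-Y)^2 w$, and the $a$-contribution produces $-i(1-Y)\delta a - i a(1-Y)^2 w$. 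Together these give the coefficient $(1+a)(1-Y)^2$.

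\textbf{Step 2: define the energy.} We set
\[
E^k(\W,R) := \|(\W,R)\|_{\H^0}^2 + \sum_{|\gamma|=k} E_{\lin}(\partial^\gamma \W, \partial^\gamma R).
\]
Coercivity \eqref{Ek-coercive} then follows from \eqref{coercivity} summed over $\gamma$, together with the standard equivalence of $\sum_{|\gamma|=k}\|\partial^\gamma u\|_{H^{0,\theta}}$ plus lower order with $\|u\|_{H^{k,\theta}}$. The low-order piece $\|(\W,R)\|_{\H^0}^2$ is handled directly from \eqref{eq:W:R}; alternatively, $E_{\lin}(\W,R)$ can be used there, since $(\W,R)$ itself nearly solves a linearized-type equation.

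\textbf{Step 3: the growth bound.} By \eqref{ee} it suffices to prove
\[
\|(P^\sharp f^\gamma, P^\sharp g^\gamma)\|_{\H^0} \lesssim_A B\,\|(\W,R)\|_{\H^k}.
\]
The part of the sources inherited from the linearization is covered by \eqref{lin-source}, applied with $(w,r) = \partial^\gamma(\W,R)$. What remains are the commutator terms
\[
\sum_{\gamma_1+\gamma_2=\gamma,\ 0<|\gamma_1|<k} \partial^{\gamma_1}(\text{coefficient})\,\partial^{\gamma_2}(\text{unknown}).
\]
These involve the coefficients $b$, $(1-\bar Y)(1+\W)$, $(1+a)(1-Y)^2$, together with $M$, and also the intermediate-order parts of $\delta b$, $\delta a$, $\delta M$.

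\textbf{Main obstacle.} The expected main obstacle is estimating these terms with only one factor of $B$ and all other factors controlled by $A$. This is a Kato–Ponce/Moser-type bound adapted to the anisotropic spaces $H^{s,\theta}$ and the half-derivative mismatch between $\W$ and $R$, for example $\partial^{\gamma_1} b \cdot \partial^{\gamma_2}\partial_\alpha\W$ in $L^2$, or $\partial^{\gamma_1} R_\alpha \cdot \partial^{\gamma_2} b$ in $H^{0,\frac12}$.

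The plan is to use a Littlewood–Paley decomposition in the full torus frequency and in the $\alpha$-frequency, and to split into three regimes:
\begin{itemize}
\item \emph{high–low}: put the low-frequency factor in $L^\infty$ using the bounds for $B$;
\item \emph{low–high}: use the lower-order factor in $L^\infty$ via Sobolev embedding with $s>\frac{d+1}2$;
\item \emph{high–high}: use Bernstein and Lemma~\ref{l:comb1}, interpolating between $\H^{s}$ and $\H^k$.
\end{itemize}
For the nonlinear coefficients we use Moser estimates with Lemmas~\ref{lem:a}, \ref{lem:b}, \ref{lem:M}. Whenever a holomorphic factor multiplies an antiholomorphic one inside $P$, the frequency-ordering trick from the $\delta M$ and $\delta b$ bounds above handles the $\partial_\alpha$ loss. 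Finally, the time derivative of the $\H^0$ part is bounded by $\lesssim_A B\|(\W,R)\|_{\H^0}^2$ via the same estimates with $k=0$.
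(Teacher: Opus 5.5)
Your proposal is correct and follows the paper's argument: $E^k=\sum_{|\kappa|=k}E_{\lin}(\partial^\kappa\W,\partial^\kappa R)$, Theorem~\ref{plin-reduced} applied to the $k$-times differentiated system, and $\H^0$ bounds on the sources, with the only difference being bookkeeping---the paper puts all of $\partial^k b\,\W_\alpha$, $\partial^k[(1+\W)M]$ and $\partial^k a\,(1-Y)$ into the source and bounds them through Lemmas~\ref{lem:b}, \ref{lem:a}, \ref{lem:M}, whereas you split off their top-order parts and invoke \eqref{lin-source}. Your extra $\|(\W,R)\|_{\H^0}^2$ term is harmless but unnecessary, since $\W$ and $R$ have zero mean and so the top-order sum already controls the full $\H^k$ norm; also, the $a$-contribution linearizes to $+ia(1-Y)^2w$, not $-ia(1-Y)^2w$, and it is the plus sign that yields the coefficient $(1+a)(1-Y)^2$.
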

We remark that the same result can also be proved for $k=0$ via a minor variation of the arguments below.
\begin{proof}
We differentiate the system $k$ times. To do this we consider any multi-index $\kappa$ of length $k$. 
We denote the
 differentiated variables by $(\W^{[\kappa]}, R^{[\kappa]})$. 
Then we define 
\begin{equation}
E^k(\W,R):= \sum_{|\kappa| = k}
E_{lin}(\W^{[\kappa]}, R^{[\kappa]}).
\end{equation}
Then the coercivity property \eqref{Ek-coercive} is immediate, and it remains to prove \eqref{Ek-dt}.

We consider the equations
\eqref{ww2d-diff-real}, which we restate here, and rewrite the nonlinear terms in the equations as multilinear expressions 
in $\W,R$ and $Y$, 
\begin{equation} \label{eq:WW2d:diff:Y}
\left\{
\begin{aligned}
 & \W_{ t} +  b\W_{ \alpha} + (1+\W)(1-\bar Y) R_\alpha   =  (1+\W)M,
\\
& R_t + b R_\alpha  - i(1-Y)(1+a) = i,
\end{aligned}
\right.
\end{equation}
where we recall that 
\[
b = 2\Re P[R(1-\bar Y)], \quad 
a = \bar P \left[\bar{R} R_\alpha\right]- P\left[R\bar{R}_\alpha\right],
\quad
M = \bar P [\bar R Y_\alpha- R_\alpha \bar Y]  + P[R \bar Y_\alpha - \bar R_\alpha Y].
\]

To track the evolution of $E^k(\W,R)$, we differentiate both equations in \eqref{eq:WW2d:diff:Y} $k$ times and obtain
a system for $(\W^{[\kappa]}, R^{[\kappa]})$, which we write as a reduced linearized system 
with source terms:
\begin{equation} \label{eq:WW:diff:k}
\left\{
\begin{aligned}
 & D_t \W^{[\kappa]} + (1 - \bar Y)(1 + W_\alpha) R^{[\kappa]}_\alpha 
 =  F^{[\kappa]},
\\
&D_t R^{[\kappa]}   -i(1 + a)(1 - Y)^2\W^{[\kappa]} =  G^{[\kappa]}.
\end{aligned}
\right.
\end{equation}
We remark that the reduced linearized system is obtained when 
all $k$ derivatives  fall on the same differentiated $\W$ or $R$ factor, as well as on $Y$ in the second equation,
while the source terms correspond to distributed derivatives. Here we also have $Y$ factor,  which requires special care; we differentiate it as follows: 
\[
\partial^k Y = \partial^{k-1}( \partial \W (1-Y)^2)
= \partial^{k} \W (1-Y)^2 + 
\partial^{k-2} (\partial \W \partial (1-Y)^2).
\]
Only the first term contributes to the reduced 
linearized system, and the rest contribute to the source term.

Overall, for $F^{[\kappa]}$ and $G^{[\kappa]}$ we obtain the expressions
\[
\begin{aligned}
F^{[\kappa]} = & \ -\partial^{k-1} (\partial b\W_{ \alpha}) - \partial^{k-1} 
(\partial[(1+\W)(1-\bar Y)] R_\alpha) + \partial^k[(1+\W) M],
\\
G^{[\kappa]} = & \ -\partial^{k-1} (\partial b R_{ \alpha})
+ i(1+a)  
\partial^{k-2} (\partial \W \partial (1-Y)^2) + i \partial^{k-1}[ 
\partial a (1-Y)].
\end{aligned}
\]

For the system \eqref{eq:WW:diff:k}, we can apply the 
energy estimates for the reduced linearized 
equations in Theorem~\ref{plin-reduced}, therefore it suffices to bound the source terms in $\H^0$,
\begin{equation}\label{FG-kappa}
\|(F^{[\kappa]},G^{[\kappa]})\|_{\H^0} \lesssim_A B \|(\W,R)\|_{\H^k}.
\end{equation}
We split the proof of the 
bounds \eqref{FG-kappa} into two steps:

\begin{itemize}
    \item We first prove bounds
    for the auxiliary variables 
    $Y,b,a,M$, where the commutator structure involving the projectors is 
    essential.

    \item It then remains to prove bilinear estimates for each term in $F^{[\kappa]}$ and $G^{[\kappa]}$, where the holomorphic structure can be safely neglected.
\end{itemize}

The bounds for $Y$, $b$, $a$, $M$ have been separately proved 
in Lemmas~\ref{l:Y}, \ref{lem:b}, \ref{lem:a} and \ref{lem:M}, so it remains 
to estimate the expressions above in a bilinear fashion.

Since we have bounds for each entry in $H^s$ type Sobolev spaces and we want to estimate the output also in $H^s$ type Sobolev spaces we can measure all norms in terms of the size of the respective Fourier coefficients.
By writing multiplications as convolutions in the Fourier space, it is easily seen that the worst-case scenario is when all Fourier coefficients are positive. We can also estimate 
distributed  derivatives by considering only the endpoint cases. 

\bigskip

a) The $L^2$ bound for  
$\partial^{k-1} (\partial b\W_{ \alpha})$. We consider the two extreme cases, using the product  bound \eqref{prod2} in Lemma~\ref{l:comb1} and the $b$ bounds in Lemma~\ref{lem:b}. First  we have 
\[
\| \partial^k b\W_{ \alpha}\|_{L^2} \lesssim \|\partial^k b\|_{H^{0,\frac12}} \|\W_{\alpha}\|_{H^{s-1}} \lesssim_A B E^k(\W,R)^\frac12.
\]

Secondly,
\[
\| \partial b\partial^{k-1} \W_{ \alpha}\|_{L^2} \lesssim 
\| \partial b\|_{L^\infty} 
\| W\|_{H^k} \lesssim \| b\|_{H^{s,\frac12}} \| W\|_{H^k}
\lesssim_A B E^k(\W,R)^\frac12.
\]
\bigskip

b) The $L^2$ bound for $\partial^{k-1} 
(\partial[(1+\W)(1-\bar Y)] R_\alpha$). Here 
by Lemma~\ref{l:Y}, the expression $\W_1:= \W - \bar Y -\W\bar Y$ satisfies the same bounds as $\W$,
\[
\|\W_1\|_{H^s} \lesssim_A \|\W \|_{H^s}, 
\qquad
\|\W_1\|_{H^k} \lesssim_A \|\W \|_{H^k}. 
\]
To bound $\partial^{k-1} (\partial \W_1 R_\alpha)$ in $L^2$ we first consider the extreme cases.  
For the first extreme case we have 
\[
\| \partial^k \W_1 R_\alpha\|_{L^2} \lesssim \| \partial^k \W_1\|_{L^2} \| R_\alpha\|_{L^\infty} \lesssim \|\W_1\|_{H^k} \| R\|_{H^{s,\frac12}}
\lesssim_A B E^k(\W,R)^\frac12. 
\]
For the second extreme case
we have
\[
\| \partial \W_1 \partial^{k-1} R_\alpha\|_{L^2} \lesssim \| \partial \W_1 \|_{H^{s-1}} \| \partial^{k-1} R_\alpha\|_{H^{0,\frac12}} \lesssim \|\W_1\|_{H^s} \| R\|_{H^{k,\frac12}}
\lesssim_A B E^k(\W,R)^\frac12.
\]
The intermediate cases follow from a standard argument interpolating between the extreme cases. 
\bigskip

c) The $L^2$ bound for $\partial^k[(1+\W) M]$. Here we use Lemma~\ref{lem:M}. For the first extreme case we have
\[
\| \partial^k\W M\|_{L^2}
\lesssim \|\W\|_{H^k} \|M\|_{H^{s-\frac12}} \lesssim_A B E^k(\W,R)^\frac12,
\]
and the second is symmetric.

\bigskip

d) The $H^{0,\frac12}$ bound for $\partial^{k-1} (\partial b R_{ \alpha})$. This is similar to a). First,  by the bound \eqref{prod3} in Lemma \ref{l:comb1} and the bounds for $b$ in Lemma \ref{lem:b}, we have 
\[
\| \partial^k bR_{ \alpha}\|_{H^{0,\frac12}} \lesssim \|\partial^k b\|_{H^{0,\frac12}} \|R_{\alpha}\|_{H^{s-\frac12}} \lesssim_A B E^k(\W,R)^\frac12.
\]
Secondly,
\[
\| \partial b\partial^{k-1} R_{ \alpha}\|_{H^{0,\frac12}} \lesssim 
\| \partial b\|_{H^{s-1,\frac12}} 
\| \partial^{k-1} R_{ \alpha}\|_{H^{0, \frac12}} \lesssim \| b\|_{H^{s,\frac12}} \| R\|_{H^{k,\frac12}}
\lesssim_A B E^k(\W,R)^\frac12. 
\]

\bigskip

e) The $H^{0,\frac12}$ bound for $(1+a)  \partial^{k-2} (\partial \W \partial (1-Y)^2)$. 
Here we have 
\[
\begin{aligned}
\| (1+a) \partial (1-Y)^2 \partial^{k-1} \W\|_{H^{0,\frac12}}\lesssim & \ (1+\|a\|_{H^{s-\frac12}}) \| \partial (1-Y)^2 \|_{H^{s-1}} \| \partial^{k-1} \W\|_{H^1}
\\
\lesssim_A & \ \|Y\|_{H^s} \|\W\|_{H^{k}}\lesssim_A B E^k(\W,R)^\frac12,
\end{aligned}
\]
where we use the bounds for $a$ in Lemma~\ref{lem:a}. The second extreme case is nearly identical.

\bigskip

f) The $H^{0,\frac12}$ bound for $
\partial^{k-1}[ \partial a (1-Y)]$.
Here on the one hand we use Lemma~\ref{l:comb1} and Lemma~\ref{lem:a} to write
\[
\| \partial^k a (1-Y)\|_{H^{0,\frac12}}
\lesssim \|\partial^k a\|_{H^{0,\frac12}} (1+\|Y\|_{H^{s-\frac12}}) \lesssim_A B E^k(\W,R)^\frac12.
\]
On the other hand we have
\[
\| \partial a \partial^{k-1} Y\|_{H^{0,\frac12}}
\lesssim \|\partial a\|_{H^{s-1}} \|\partial^{k-1}  Y\|_{H^{1}} \lesssim_A B E^k(\W,R)^\frac12.
\]

\end{proof}
\section{Multiplicative and commutator bounds}
\label{s:multi}

Here we collect all multilinear and commutator estimates needed for the proof. 
We begin with a Bernstein type inequality for the projectors $P_\lambda ^\alpha$:

\begin{lemma}
\label{lem:Bernstein}
Assume that $s > \frac{d-1}2$. Then we have
\begin{equation}
\label{b1}
\|P_\lambda ^\alpha u\|_{L^\infty} \lesssim \lambda^{\frac{d}2-s} \| u\|_{H^s},     
\end{equation}
as well as 
\begin{equation}
\label{b2}
\|P_\lambda ^\alpha u\|_{L^\infty} \lesssim \lambda^{\frac{d-1}2-s} \| u\|_{H^{s,\frac{1}{2}}}.   
\end{equation}
\end{lemma}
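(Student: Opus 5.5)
The plan is to bound $P_\lambda^\alpha u$ pointwise by the $\ell^1$ norm of its Fourier coefficients, apply Cauchy--Schwarz against the $H^s$ weight, and reduce everything to counting lattice points in a slab. Concretely, for every $\balpha\in\T^d$,
\[
|P_\lambda^\alpha u(\balpha)| \le \sum_{\bds j\in S_\lambda} |\hat u_{\bds j}|
\le \Big(\sum_{\bds j\in S_\lambda} \la \bds j\ra^{-2s}\Big)^{1/2} \|u\|_{H^s},
\]
where $S_\lambda := \{\bds j\in\Z^d : |\la \bds j,\bds k\ra| \le 2\lambda\}$ contains the support of $\varphi_l(\la\bds j,\bds k\ra)$, and $\la \bds j\ra=(1+|\bds j|^2)^{1/2}$. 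Thus \eqref{b1} reduces to the lattice sum estimate
\[
\Sigma_\lambda := \sum_{\bds j\in S_\lambda} \la \bds j\ra^{-2s} \lesssim \lambda^{d-2s}, \qquad s>\tfrac{d-1}{2}.
\]

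To prove this I would decompose $S_\lambda$ dyadically in $|\bds j|$, writing $S_\lambda^N := S_\lambda\cap\{N\le \la\bds j\ra<2N\}$ for dyadic $N\ge1$. There are two ingredients.
\begin{enumerate}
\item For $\lambda\ge 2$, every $\bds j$ in the support of $\varphi_l$ satisfies $|\bds j|\ge |\la\bds j,\bds k\ra|/|\bds k| \gtrsim \lambda$. So only $N\gtrsim\lambda$ occur. For $\lambda=1$ we simply take all $N\ge1$.
\item A counting bound $\# S_\lambda^N \lesssim \lambda N^{d-1}$.
\end{enumerate}

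The counting bound is the only genuinely geometric step, and I expect it to be the main (though mild) obstacle, since it must hold uniformly in $N$ with no Diophantine assumption on $\bds k$. I would attach to each lattice point $\bds j$ the unit cube $\bds j+[0,1)^d$. These cubes are disjoint and contained in the enlarged region
\[
\{\bds x : |\la \bds x,\bds k\ra|\le 2\lambda+\sqrt d|\bds k|,\ |\bds x|\le 2N+\sqrt d\}.
\]
This region is a slab of width $\lesssim (\lambda+1)/|\bds k|$ intersected with a ball of radius $\lesssim N$. Its volume is therefore $\lesssim \lambda N^{d-1}$, using $\lambda\ge1$ and constants depending only on $\bds k$ and $d$. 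Comparing volumes gives the counting bound.

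Then
\[
\Sigma_\lambda \lesssim \sum_{N\gtrsim \lambda} \lambda N^{d-1} N^{-2s} \lesssim \lambda\cdot\lambda^{d-1-2s} = \lambda^{d-2s}.
\]
The geometric series converges precisely because $2s>d-1$, which is exactly the hypothesis of the lemma. Taking square roots yields \eqref{b1}.

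For \eqref{b2}, I observe that on the support of $P_\lambda^\alpha$ the weight $\la D_\alpha\ra^{1/2}$ is comparable to $\lambda^{1/2}$. Hence $\|P^\alpha_\lambda u\|_{H^s}\lesssim \lambda^{-1/2}\|P_\lambda^\alpha u\|_{H^{s,\frac12}} \le \lambda^{-1/2}\|u\|_{H^{s,\frac12}}$. Applying \eqref{b1} to $P_\lambda^\alpha u$, noting that $P_\lambda^\alpha$ is idempotent up to replacing it by $\tilde P_\lambda$, gives
\[
\|P^\alpha_\lambda u\|_{L^\infty}\lesssim \lambda^{\frac d2-s}\lambda^{-\frac12}\|u\|_{H^{s,\frac12}},
\]
which is \eqref{b2}. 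Equivalently, one can run the Cauchy--Schwarz argument directly with the weight $\la\bds j\ra^{2s}\la\la\bds j,\bds k\ra\ra$.
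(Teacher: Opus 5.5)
Your proof is correct and follows essentially the paper's route: the $\ell^1$ bound on the Fourier coefficients, Cauchy--Schwarz against $\la\bds j\ra^{2s}$, and a count of lattice points in the slab $|\la\bds j,\bds k\ra|\approx\lambda$. Your unit-cube volume comparison and dyadic sum in $|\bds j|$ make rigorous the paper's replacement of the lattice sum by an integral split into $|\xi|\approx\lambda$ and $|\xi'|>\lambda$, which converges for the same reason, $2s>d-1$; your reduction of \eqref{b2} to \eqref{b1} via $\la D_\alpha\ra^{1/2}\approx\lambda^{1/2}$ on the support is the ``almost identical'' argument the paper leaves implicit.

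One small fix: for $\lambda\ge 2$, define $S_\lambda$ as the annular slab $\lambda/2\le|\la\bds j,\bds k\ra|\le 2\lambda$, as your ingredient (1) tacitly does. The full slab $|\la\bds j,\bds k\ra|\le 2\lambda$ contains $\bds j=0$, so its sum is not $O(\lambda^{d-2s})$ when $s>d/2$.
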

\begin{proof}
For the bound \eqref{b1} we use the Fourier series  expansion 
\[
P_\lambda^\alpha u= \sum_{\bfj \in D_\lambda} \hat u_{\bfj} e^{i \bfj\cdot \balpha},
\]
where 
\[
D_\lambda = \{ \bfj \in \Z^d;\ |\langle \bfj,\bfk\rangle| \approx \lambda\}
\]
and estimate 
\[
\begin{aligned}
\| P_\lambda^\alpha u \|_{L^\infty} &= \sum_{\bfj \in D_\lambda} |u_{\bfj}|
\lesssim \sum_{\bfj \in D_\lambda} |u_{\bfj}|\langle\bfj\rangle ^{s}
\cdot \frac{1}{\langle\bfj\rangle^{s}} \\ &\lesssim \left( \sum_{\bfj \in D_\lambda} |u_{\bfj}|^2 \langle \bfj\rangle ^{2s}\right)^\frac12 \left( \sum_{\bfj \in D_\lambda} \frac{1}{\langle \bfj\rangle ^{2s}}\right)^\frac12.
\end{aligned}
\]
We can replace the last sum with an integral, where we need to bound
\[
\int_{ |\bfj \cdot \bfk| \approx\lambda} \frac{1}{\langle \bfj\rangle ^{2s}} d \bfj.
\]
Here we can freely rotate coordinates so that $\bfk = e_1$ 
where the integral becomes
\[
\int_{|\xi_1| \approx\lambda} \frac{1}{\langle \xi\ra^{2s}} d\xi
\lesssim \int_{|\xi| \approx \lambda} \frac{1}{\langle \xi\rangle^{2s}} d\xi
+ \lambda \int_{|\xi'| > \lambda} \frac{1}{\langle \xi'\rangle^{2s}} d\xi' \approx \lambda^{\frac d2-s}.
\]
These integrals are both easy to compute in polar coordinates, and the requirement $s >\frac{d-1}2$ is needed in order to guarantee the convergence of the second.

The bound (\ref{b2}) follows in an almost identical manner.

\end{proof}

We continue with some commutator bounds:

\begin{lemma}\label{l:comb}
 For $s > \frac{d+1}2$ we have the following commutator bounds:
\begin{equation}\label{com1}
\| [f,P] \partial_\alpha\|_{L^2 \to L^2} \lesssim \| f\|_{H^{s,\frac12}},
\end{equation}
\begin{equation}\label{com2}
\| [f,P] \partial_\alpha\|_{H^{0,\frac12} \to L^2} \lesssim \| f\|_{H^{s}},
\end{equation}
\begin{equation}\label{com3}
\| [f,P] \partial_\alpha\|_{H^{0,\frac12}\to H^{0,\frac12}} \lesssim \| f\|_{H^{s,\frac{1}{2}}}.
\end{equation}
\end{lemma}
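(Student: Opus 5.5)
We work on the Fourier side on $\T^d$, with $f=\sum_{\bfj}\hat f_{\bfj}e^{i\la\bfj,\balpha\ra}$ and $u=\sum_{\bfk'}\hat u_{\bfk'}e^{i\la\bfk',\balpha\ra}$. The commutator has the bilinear symbol
\[
\widehat{[f,P]\partial_\alpha u}(\bfj+\bfk') = \big(p(\xi_\alpha(\bfk')) - p(\xi_\alpha(\bfj+\bfk'))\big)\, i\xi_\alpha(\bfk')\,\hat f_{\bfj}\hat u_{\bfk'},
\]
where $p$ is the symbol of $P$: it equals $1$ on negative $\alpha$-frequencies, $0$ on positive ones, and $\tfrac12$ at $0$. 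The symbol difference vanishes unless $\xi_\alpha(\bfk')$ and $\xi_\alpha(\bfj+\bfk')$ have different signs, or one of them is zero. In that case $|\xi_\alpha(\bfk')|\le|\xi_\alpha(\bfj)|$, because moving from $\xi_\alpha(\bfk')$ to the opposite side of zero requires a shift of at least $|\xi_\alpha(\bfk')|$. Hence the factor $\xi_\alpha(\bfk')$ can always be transferred onto $f$:
\[
|\xi_\alpha(\bfk')| \le |\xi_\alpha(\bfk')|^{\theta}|\xi_\alpha(\bfj)|^{1-\theta}, \qquad 0\le\theta\le1,
\]
with the split $\theta$ chosen for each estimate. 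The bounds then reduce to the product estimate
\[
\Big\|\sum_{\bfj}|\hat F_{\bfj}|\,|\hat U_{\bfk-\bfj}|\Big\|_{\ell^2_{\bfk}} \lesssim \|\hat F\|_{\ell^1}\|\hat U\|_{\ell^2},
\]
which is Young's inequality. Positive coefficients are the worst case, as already remarked in Section~\ref{s:ee}.

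\textbf{Bound \eqref{com1}.} Take $\theta=0$, so the full derivative lands on $f$. The output is bounded in $L^2$ by $\|\,|\xi_\alpha|\hat f\,\|_{\ell^1}\|u\|_{L^2}$. By Cauchy--Schwarz,
\[
\sum_{\bfj}|\xi_\alpha(\bfj)|\,|\hat f_{\bfj}| \lesssim \|f\|_{H^{s,\frac12}}\Big(\sum_{\bfj}\frac{|\xi_\alpha(\bfj)|}{\la\bfj\ra^{2s}\la\xi_\alpha(\bfj)\ra}\Big)^{1/2}.
\]
The last sum is bounded by $\sum_{\bfj}\la\bfj\ra^{-2s}$, which is finite since $2s>d+1>d$. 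Note that only $|\xi_\alpha(\bfj)|\lesssim|\bfj|$ is used here, with $|\bfk|$ treated as a constant; no diophantine condition on $\bfk$ enters.

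\textbf{Bound \eqref{com2}.} Take $\theta=\tfrac12$. Then $|\xi_\alpha(\bfk')|^{1/2}|\hat u_{\bfk'}|$ is controlled in $\ell^2$ by $\|u\|_{H^{0,\frac12}}$, and it remains to show $\sum_{\bfj}|\xi_\alpha(\bfj)|^{1/2}|\hat f_{\bfj}|\lesssim\|f\|_{H^s}$. By Cauchy--Schwarz this needs $\sum_{\bfj}|\bfj|\la\bfj\ra^{-2s}<\infty$, which holds exactly when $2s-1>d$, i.e.\ $s>\frac{d+1}2$. This is the sharp use of the hypothesis.

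\textbf{Bound \eqref{com3}.} The output weight $\la\xi_\alpha(\bfj+\bfk')\ra^{1/2}$ has to be distributed. On the support of the symbol difference we have $|\xi_\alpha(\bfj+\bfk')|\le|\xi_\alpha(\bfj)|$, since the output frequency also lies between zero and $\xi_\alpha(\bfj)$ after crossing. So the output weight can be placed on $f$. Together with $|\xi_\alpha(\bfk')|\le|\xi_\alpha(\bfk')|^{1/2}|\xi_\alpha(\bfj)|^{1/2}$, the total weight on $f$ is $\la\xi_\alpha(\bfj)\ra$, and on $u$ it is $|\xi_\alpha(\bfk')|^{1/2}$, which $\|u\|_{H^{0,\frac12}}$ absorbs. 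We then need $\sum_{\bfj}\la\xi_\alpha(\bfj)\ra|\hat f_{\bfj}|\lesssim\|f\|_{H^{s,\frac12}}$, and this is exactly the computation done for \eqref{com1}.

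\textbf{Main obstacle.} The delicate point is the frequency bookkeeping at the zero mode and at exactly degenerate $\alpha$-frequencies, where $p=\tfrac12$. There one has to check that every nonvanishing symbol difference still satisfies $|\xi_\alpha(\bfk')|,\,|\xi_\alpha(\bfj+\bfk')|\le|\xi_\alpha(\bfj)|$. A further issue is that for rationally independent $\bfk$ the zero set of $\xi_\alpha$ on $\Z^d$ is just $\{0\}$, yet $\xi_\alpha(\bfj)$ can be arbitrarily small. This is why all summability arguments must use the full lattice weight $\la\bfj\ra^{s}$, never decay in $\xi_\alpha$.
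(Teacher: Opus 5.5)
Your proof is correct. It rests on the same structural fact as the paper's proof. The commutator $[f,P]\partial_\alpha$ only sees interactions in which the $\alpha$-frequency of $u$ is dominated by that of $f$, so the derivative can be moved onto $f$, which is then measured in an $L^\infty$-type norm.

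The implementation is genuinely different, though. The paper decomposes dyadically in the $\alpha$ direction and uses the ordering $\mu\lesssim\lambda$ between the blocks of $u$ and $f$. It then controls $\|f_\lambda\|_{L^\infty}$ by the anisotropic Bernstein inequality of Lemma~\ref{lem:Bernstein}, whose proof requires counting lattice points in slabs $|\langle\bfj,\bfk\rangle|\approx\lambda$. You instead prove the exact modewise inequalities $|\xi_\alpha(\bfk')|,\ |\xi_\alpha(\bfj+\bfk')|\le|\xi_\alpha(\bfj)|$ on the support of the symbol difference. You then close with Young's inequality and a Wiener-algebra ($\ell^1$) bound on the weighted coefficients of $f$, obtained by Cauchy--Schwarz over the whole lattice.

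\textbf{What your route buys.} It is more elementary and self-contained. It handles the zero mode and the overlap of neighbouring dyadic blocks exactly rather than approximately. It also actually carries out \eqref{com3}, which the paper leaves to the reader.

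\textbf{What the paper's route buys.} The dyadic route keeps the off-diagonal factor $(\mu/\lambda)^{\gamma}$. Using $\|f_\lambda\|$ in place of $\|f\|$ and applying Schur's test then reaches the endpoint $s=\frac{d+1}2$ mentioned after the lemma. Your $\ell^1$ bound cannot reach it: it fails logarithmically there, because $|\xi_\alpha(\bfj)|\approx|\bfj|$ on a fixed proportion of each dyadic shell and hence $\sum_{\bfj}|\xi_\alpha(\bfj)|\langle\bfj\rangle^{-d-1}=\infty$. The Littlewood--Paley/Bernstein machinery is also reused elsewhere in the paper.

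\textbf{A correction.} In the Cauchy--Schwarz step for \eqref{com1} you dropped a square. The dual sum is
$\sum_{\bfj}|\xi_\alpha(\bfj)|^2\langle\xi_\alpha(\bfj)\rangle^{-1}\langle\bfj\rangle^{-2s}\lesssim\sum_{\bfj}\langle\bfj\rangle^{1-2s}$,
not a quantity bounded by $\sum_{\bfj}\langle\bfj\rangle^{-2s}$. So \eqref{com1}, and via your reduction \eqref{com3} as well, uses $s>\frac{d+1}2$ just as sharply as \eqref{com2}, not merely $2s>d$. Your conclusions are unaffected, since the hypothesis covers the corrected sum.
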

We note that the same bounds also hold for $s = \frac{d+1}2$, with slight enhancements in the proof below.
\begin{proof}
We use a Littlewood-Paley decomposition in the $\alpha$ direction, decomposing the commutator as 
\[
[f,P] \partial_\alpha u = \sum_{\mu < \lambda}
(f_\lambda P \partial_\alpha u_\mu - Pf_\lambda \partial_\alpha u_\mu) - \sum_\lambda P( f_\lambda \partial_\alpha u_\lambda).
\]

Then for \eqref{com1} we can use Lemma \ref{lem:Bernstein} to estimate
\[
\begin{aligned}
\|[f,P] \partial_\alpha u\|_{L^2} \lesssim & \sum_{\mu \leq \lambda}  
\| f_\lambda \|_{L^\infty} \| \partial_\alpha u_\mu \|_{L^2} 
\lesssim  \sum_{\mu \leq \lambda}  
\| f_\lambda \|_{L^\infty}  \mu \| u_\mu \|_{L^2} 
\\
\lesssim & \sum_{\mu \leq \lambda} \lambda^{\frac{d-1}2-s}  \| f \|_{H^{s,\frac12}}  \mu \| u_\mu \|_{L^2} 
\\
\lesssim & \ \| f \|_{H^{s,\frac12}}   \| u \|_{L^2}.
\end{aligned}
\]
For \eqref{com2} we fork the estimate at the second line, writing instead
\[
\begin{aligned}
\|[f,P] \partial_\alpha u\|_{L^2} 
\lesssim & \sum_{\mu \leq \lambda} \lambda^{\frac{d}2-s}  \| f \|_{H^{s}}  \mu^\frac12 \| u_\mu \|_{H^{0,\frac12}} 
\\
\lesssim & \| f \|_{H^{s}}   \| u \|_{H^{0,\frac12}}.
\end{aligned}
\]
The argument for \eqref{com3} is similar and is left for the reader.
This concludes the proof of the Lemma.
\end{proof}

\begin{lemma}\label{l:comb1}
 For $s>\frac{d+1}{2}$ we have the multiplication bounds 

 \begin{equation}\label{prod1}
\| f   \|_{H^{0,\frac12} \to H^{0,\frac12}} \lesssim \| f\|_{H^{s-\frac12},}
\end{equation}
\begin{equation}\label{prod2}
\| f\|_{H^{0,\frac12} \to L^2} 
\lesssim
\| f\|_{H^{s-1}},
\end{equation}
\begin{equation}
\label{prod3}
\|f\|_{H^{0,\frac12} \to H^{0, \frac12}} \lesssim \|f\|_{H^{s-1, \frac12}}.
\end{equation}
\end{lemma}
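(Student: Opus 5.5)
We prove the three bounds by a paraproduct decomposition in the $\alpha$ direction, as in the proof of Lemma~\ref{l:comb}, combined with the Bernstein bounds of Lemma~\ref{lem:Bernstein} and a bilinear estimate on the torus for the high--high part. For $u \in H^{0,\frac12}$ we write
\[
fu = \sum_{\mu \ll \lambda} f_\lambda u_\mu + \sum_{\lambda \ll \mu} f_\lambda u_\mu + \sum_{\lambda \approx \mu} f_\lambda u_\mu ,
\]
where $f_\lambda = P^\alpha_\lambda f$ and $u_\mu = P^\alpha_\mu u$. Only the $\alpha$-frequency matters for the $H^{0,\frac12}$ weight. The output $\alpha$-frequency is $\approx \max(\lambda,\mu)$ in the first two sums, and $\lesssim \lambda$ in the diagonal sum. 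In each piece we put the $f$ factor in $L^\infty$ via Lemma~\ref{lem:Bernstein} and the $u$ factor in $L^2$, then sum dyadically.

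\emph{Bound \eqref{prod2}.} Since $s-1 > \frac{d-1}{2}$, \eqref{b1} with $s-1$ gives $\|f_\lambda\|_{L^\infty} \lesssim \lambda^{\frac d2 - s+1}\|f\|_{H^{s-1}}$. Along the diagonal we have $\|f_\lambda\|_{L^\infty}\|u_\lambda\|_{L^2} \lesssim \lambda^{\frac{d+1}{2}-s}\|f\|_{H^{s-1}}\,\lambda^{\frac12}\|u_\lambda\|_{L^2}$. Since $s>\frac{d+1}2$ the exponent is negative, so Cauchy--Schwarz closes the sum, with the output measured in $L^2$ without orthogonality.

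In the range $\lambda \ll \mu$ we use orthogonality of the outputs at frequency $\mu$ together with $\sum_\lambda \|f_\lambda\|_{L^\infty} \lesssim \|f\|_{H^{s-1}}$. This sum converges because $s-1>\frac d2$. In the range $\mu \ll \lambda$ we again use orthogonality in $\lambda$, and bound $\sum_{\mu\le\lambda}\|u_\mu\|_{L^2} \lesssim \|u\|_{H^{0,\frac12}}$, so that $\|f_\lambda\|_{L^\infty}$ is square summable.

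I expect the real difficulty to be the case where $f$ sits at $\alpha$-frequency $\lambda$ but has very large transverse frequency, since the Bernstein bound \eqref{b1} already absorbs this. The potential issue is whether $H^{s-1}$ with $s-1 > \frac{d-1}2$ rather than $>\frac d2$ suffices for $\sum_\lambda \|f_\lambda\|_{L^\infty}$. Here the gain of $\frac12$ derivative on $u$ compensates. In the low--high case I would instead split as $\|f_\lambda\|_{L^\infty}\lesssim \lambda^{\frac{d}{2}-s+1}\|f_\lambda\|_{H^{s-1}}$ and trade $\mu^{\frac12}$ against $\lambda^{\frac12}$ when needed.

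\emph{Bound \eqref{prod1}.} The output weight $\max(\lambda,\mu)^{\frac12}$ is distributed in each of the three pieces. When $u$ has the high frequency, the weight lands on $u$, and $\|f\|_{L^\infty} \lesssim \|f\|_{H^{s-\frac12}}$ by Sobolev embedding, since $s-\frac12>\frac d2$. When $f$ has the high frequency, we bound
\[
\lambda^{\frac12}\|f_\lambda\|_{L^\infty}\|u_\mu\|_{L^2} \lesssim \lambda^{\frac{d+1}{2}-s}\|f_\lambda\|_{H^{s-\frac12}}\,\|u_\mu\|_{L^2},
\]
and sum as before using $s>\frac{d+1}2$. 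The high--high piece is handled identically.

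\emph{Bound \eqref{prod3}.} This follows the same way, now using \eqref{b2} with $s-1$ in place of $s$. This gives $\|f_\lambda\|_{L^\infty} \lesssim \lambda^{\frac{d+1}{2}-s}\|f\|_{H^{s-1,\frac12}}$, which has the same gain as in \eqref{prod1}. The remaining dyadic sums are then identical to those above.
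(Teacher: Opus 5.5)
\textbf{Verdict: genuine gap, in the high--low range $\mu\ll\lambda$.}

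Your three ranges $\lambda\ll\mu$, $\lambda\approx\mu$ and $\mu\ll\lambda$ are the paper's $S_1$, $S_2$ and $S_3$. Your diagonal argument agrees with the paper's. Your low--high argument also agrees, once you drop the false claim $s-1>\frac d2$ (only $s-1>\frac{d-1}2$ holds) and use the trade $\lambda^{\frac12}\le\mu^{\frac12}$. That trade is exactly the paper's bound $\lambda^{-\frac12}\|f_{<\lambda}\|_{L^\infty}\lesssim\|f\|_{H^{s-1}}$. The paper's proof does not help with the remaining range: it omits $S_3$ as ``similar''.

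\emph{Why your high--low step fails.} For \eqref{prod2} you need $\sum_\lambda\|f_\lambda\|_{L^\infty}^2\lesssim\|f\|_{H^{s-1}}^2$.
\begin{itemize}
\item \eqref{b1} only gives $\|f_\lambda\|_{L^\infty}\lesssim\lambda^{\frac d2-s+1}\|f_\lambda\|_{H^{s-1}}$. This is sharp: take Fourier coefficients proportional to $\langle\bds m\rangle^{-2(s-1)}$ on the slab $|\la\bds m,\bfk\ra|\approx\lambda$.
\item The exponent is positive whenever $s<\frac d2+1$. So for $\frac{d+1}2<s<\frac d2+1$ the square sum diverges for suitable $f\in H^{s-1}$.
\item No trade rescues this, because $u$ is now the low-frequency factor. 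Its $H^{0,\frac12}$ weight yields only $\mu^{-\frac12}$, not $\lambda^{-\frac12}$.
\end{itemize}
The same problem affects \eqref{prod1} and \eqref{prod3}. Your display there drops a factor $\lambda^{\frac12}$; the correct bound is $\lambda^{\frac12}\|f_\lambda\|_{L^\infty}\lesssim\lambda^{\frac{d+2}2-s}\|f_\lambda\|_{H^{s-\frac12}}$. On the diagonal this power is absorbed by $\|u_\lambda\|_{L^2}\approx\lambda^{-\frac12}\|u_\lambda\|_{H^{0,\frac12}}$, but not when $\mu\ll\lambda$.

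The loss is in H\"older's inequality itself. $f_\lambda$ can be a bump of width $\lambda^{-1}$ in every direction. $u_\mu$ is arbitrary transversally but cannot concentrate below width $\mu^{-1}$ along the $\alpha$ direction.

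\emph{What is missing} is a bilinear bound that uses the $\alpha$-localization of $u_\mu$ through the convolution, rather than an $L^\infty$ bound on $f_\lambda$. Write $\widehat{f_\lambda u_\mu}(\bfj)=\sum_{\bds m}\widehat{f_\lambda}(\bds m)\,\widehat{u_\mu}(\bfj-\bds m)$. The index $\bds m$ is confined to the part of the slab $|\la\bds m,\bfk\ra|\approx\lambda$ where $\la\bds m,\bfk\ra$ lies in an interval of length $O(\mu)$. There $\sum\langle\bds m\rangle^{-2(s-1)}\lesssim\mu\,\lambda^{d+1-2s}$, since the transverse sum converges for $2(s-1)>d-1$. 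Cauchy--Schwarz then gives
\[
\|f_\lambda u_\mu\|_{L^2}\lesssim\mu^{\frac12}\lambda^{\frac{d+1}2-s}\|f_\lambda\|_{H^{s-1}}\|u_\mu\|_{L^2}.
\]
Summing over $\mu<\lambda$ costs only $(1+\log\lambda)^{\frac12}\|u\|_{H^{0,\frac12}}$, which the factor $\lambda^{\frac{d+1}2-s}$ absorbs. Multiplying by $\lambda^{\frac12}$ and using $\|f_\lambda\|_{H^{s-1}}\approx\lambda^{-\frac12}\|f_\lambda\|_{H^{s-1,\frac12}}$ handles the high--low part of \eqref{prod3}, and hence of \eqref{prod1}.

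Alternatively, all three bounds follow at once, with no dyadic case analysis. Apply Cauchy--Schwarz to the Fourier convolution and use the Schur-type bound $\sup_{\bfj}\sum_{\bds m\in\Z^d}\langle\bds m\rangle^{-2(s-1)}(1+|\la\bfj-\bds m,\bfk\ra|)^{-1}<\infty$, which holds because $s>\frac{d+1}2$. For \eqref{prod3}, first split the output weight between the two factors.
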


\begin{proof}
We note that the bound \eqref{prod3} implies the bound in \eqref{prod1}. Therefore, it suffices to prove \eqref{prod2} and \eqref{prod3}.
We use again a Littlewood-Paley decomposition in the $\alpha$ direction, decomposing the product in a paradifferential fashion as 
\[
fu = \sum_{\lambda} f_{< \lambda} u_{\lambda} + f_\lambda u_\lambda 
+ f_\lambda u_{<\lambda} := S_1+S_2+S_3.
\]
In $S_1$ the output of the summands is localized at $\alpha$-frequency $\lambda$, therefore for \eqref{prod3} we can use orthogonality and Bernstein's inequality to write
\[
\begin{aligned}
\|S_1\|_{H^{0,\frac12}}^2 \lesssim & \sum_\lambda \lambda \|f_{< \lambda} u_{\lambda}\|_{L^2}^2 \lesssim  \sum_\lambda \lambda \|f_{< \lambda}\|_{L^\infty}^2 \| u_{\lambda}\|_{L^2}^2 
\\ \lesssim & \ 
\|f\|_{H^{s-1, \frac12}}^2\sum_\lambda \lambda \| u_{\lambda}\|_{L^2}^2 
\lesssim 
\|f\|_{H^{s-1, \frac12}}^2 \|u\|_{H^{0,\frac12}}^2.
\end{aligned}
\]
Similarly for \eqref{prod2} we have
\[
\begin{aligned}
\|S_1\|_{L^2}^2 \lesssim & \sum_\lambda  \|f_{< \lambda} u_{\lambda}\|^2_{L^2} \lesssim  \sum_\lambda \lambda^{-1} \|f_{< \lambda}\|_{L^\infty}^2 \lambda \| u_{\lambda}\|_{L^2}^2 
\\ \lesssim & \ 
\|f\|_{H^{s-1}}^2\sum_\lambda \lambda \| u_{\lambda}\|_{L^2}^2 
\lesssim 
\|f\|_{H^{s-1}}^2 \|u\|_{H^{0,\frac12}}^2.
\end{aligned}
\]
The bound for $S_2$ is simpler, as we no longer need to use orthogonality. To estimate $S_2$ for \eqref{prod3} we write
\[
\begin{aligned}
\|S_2\|_{H^{0,\frac12}} \lesssim & \sum_\lambda \lambda \|f_{ \lambda} u_{\lambda}\|_{L^2} \lesssim  \sum_\lambda \lambda \|f_{ \lambda}\|_{L^\infty} \| u_{\lambda}\|_{L^2}
\\ \lesssim & \ 
\sum_\lambda \lambda^{\frac{d+1}2 - s} \|f_\lambda\|_{H^{s-1,\frac12}} \| u_{\lambda}\|_{H^{0,\frac12}}
\lesssim 
\|f\|_{H^{s-1, \frac12}} \|u\|_{H^{0,\frac12}}
,
\end{aligned}
\]
while for \eqref{prod2} we have
\[
\begin{aligned}
\|S_2\|_{L^2} \lesssim & \sum_\lambda \|f_{ \lambda} u_{\lambda}\|_{L^2} \lesssim  \sum_\lambda  \|f_{ \lambda}\|_{L^\infty} \| u_{\lambda}\|_{L^2}
\\ \lesssim & \ 
\sum_\lambda \lambda^{\frac{d+1}2 - s} \|f_\lambda\|_{H^{s-1}} \| u_{\lambda}\|_{H^{0,\frac12}}
\lesssim 
\|f\|_{H^{s-1}} \|u\|_{H^{0,\frac12}}
.
\end{aligned}
\]
The proof of the bounds for $S_3$ is similar and is omitted.

\end{proof}

The next lemma uses the paraproduct notion,
which arises when one considers
paraproduct  decomposition  of the product of two functions,
\[
 f g = \sum_{k > l+4} f_{l} g_k +  \sum_{k > l+4} f_{k} g_{l} + \sum_{|k-l| \leq 4}
f_k g_l := T_f g + T_g f + \Pi(f,g).
\]

With this notation, we have
\begin{lemma}\label{l:para-err}
For $s > \frac{d+1}2$ we have 
\begin{equation}
\| (T_f - f) u\|_{H^{0,\frac12}} \lesssim 
\| f\|_{H^s} \|u\|_{L^2}.
\end{equation}
\end{lemma}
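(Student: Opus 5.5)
The plan is to read the paraproduct decomposition with Littlewood--Paley pieces taken in the $\alpha$ direction (the projectors $P^\alpha_l$ of \S\ref{ss:LP:quasi}). Then
\[
(f - T_f)u = T_u f + \Pi(f,u) = \sum_{\lambda} f_\lambda u_{\lesssim \lambda},
\]
where in $T_u f$ the frequency of $u$ is at most $\lambda/16$ and in $\Pi(f,u)$ the two frequencies are comparable. The essential point is that in every summand the $\alpha$-frequency of $f$ dominates, up to a constant, that of $u$. I will estimate the two pieces separately, in the spirit of the proof of Lemma~\ref{l:comb1}, trading the half derivative in the output for decay from $f$ via Bernstein's inequality \eqref{b1} of Lemma~\ref{lem:Bernstein}.

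\textbf{The $T_u f$ piece.} The summand $f_\lambda u_{<\lambda/16}$ has output $\alpha$-frequency comparable to $\lambda$, so almost orthogonality gives
\[
\|T_u f\|_{H^{0,\frac12}}^2 \lesssim \sum_\lambda \lambda \|f_\lambda u_{<\lambda/16}\|_{L^2}^2 \lesssim \sum_\lambda \lambda \|f_\lambda\|_{L^\infty}^2\|u\|_{L^2}^2 .
\]
By \eqref{b1}, $\lambda^{\frac12}\|f_\lambda\|_{L^\infty} \lesssim \lambda^{\frac{d+1}2-s}\|f_\lambda\|_{H^s}$. Since $s>\frac{d+1}2$, this is square summable, in fact geometrically summable, and bounded by $\|f\|_{H^s}$. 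This gives the desired bound for this piece.

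\textbf{The $\Pi(f,u)$ piece.} Here the output frequency can be anything up to about $\lambda$, so orthogonality is lost. Instead I will use the triangle inequality together with the crude bound $\|P_\nu(f_\lambda u_\lambda)\|_{H^{0,\frac12}}\lesssim \lambda^{\frac12}\|f_\lambda u_\lambda\|_{L^2}$, valid because the output is supported at frequencies $\lesssim\lambda$. This gives
\[
\|\Pi(f,u)\|_{H^{0,\frac12}} \lesssim \sum_\lambda \lambda^{\frac12}\|f_\lambda\|_{L^\infty}\|u_\lambda\|_{L^2} \lesssim \sum_\lambda \lambda^{\frac{d+1}2-s}\|f\|_{H^s}\|u\|_{L^2},
\]
again geometrically summable since $s>\frac{d+1}2$.

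\textbf{Main obstacle.} The only delicate point is justifying Bernstein in the anisotropic setting. Localization is in the scalar frequency $\langle \bds j,\bds k\rangle$ rather than in $|\bds j|$, so the $H^s$ norm (which uses $\langle \bds j\rangle$) must control the $L^\infty$ norm of $f_\lambda$ with gain $\lambda^{\frac d2 -s}$. This is exactly \eqref{b1}, which needs only $s>\frac{d-1}2$. I also need the fact that products of $\alpha$-localized pieces are $\alpha$-localized at the sum of frequencies, which holds because $\xi_\alpha$ is linear in $\bds j$. With these two facts in hand the argument is routine. The low-frequency block $\lambda=1$ (where $P_{\le 0}$ includes the zero mode) is handled by the same bounds.
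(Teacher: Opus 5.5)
Your argument is correct and follows the paper's proof: a Littlewood--Paley decomposition in the $\alpha$ direction in which the $f$ frequency dominates, the bound $\lambda^{\frac12}\|f_\lambda u_\mu\|_{L^2}$ for the $H^{0,\frac12}$ norm of each piece, Bernstein \eqref{b1}, and geometric summation of $\lambda^{\frac{d+1}2-s}$. The only difference is that you handle the high--low part $T_u f$ by almost orthogonality with $u_{<\lambda/16}$ grouped together, while the paper uses the triangle inequality over all pairs $\mu\le\lambda$; this is an inessential refinement that also avoids the harmless $(\log\lambda)^{\frac12}$ factor from summing $\|u_\mu\|_{L^2}$ over $\mu\le\lambda$.
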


\begin{proof}
With a Littlewood-Paley decomposition in the $\alpha$ direction, the 
expression to estimate can be expanded as 
\[
(T_f - f) u = \sum_{\mu \leq \lambda} f_\lambda u_\mu.
\]
Then we estimate
\[
\begin{aligned}
\|  (T_f - f) u\|_{ H^{0,\frac12}} \lesssim &  \sum_{\mu \leq \lambda} \lambda^\frac12 \|f_\lambda u_\mu\|_{L^2} 
\\
\lesssim &  \sum_{\mu \leq \lambda} \lambda^\frac12 \|f_\lambda\|_{L^\infty}  \|u_\mu\|_{L^2} 
\\
\lesssim &  \sum_{\mu \leq \lambda} \lambda^{\frac{d+1}2-s} \|f_\lambda\|_{H^s}  \|u_\mu\|_{L^2} 
\\
\lesssim &  \|f\|_{H^s}  \|u\|_{L^2} \sum_{\lambda} \lambda^{\frac{d+1}2-s} 
\\
\lesssim &  \|f\|_{H^s}  \|u\|_{L^2}
\end{aligned}
\]
as needed.
\end{proof}

\section{Water Waves related estimates}
\label{s:water}

\begin{lemma}\label{l:Y}
Assume $s > \frac{d+1}2$. Then the function $Y$ satisfies the following Moser-type estimates:
\begin{itemize}
\item[a)] Let $k \geq 0$. Then in $H^k$ we get
\begin{equation}
\Vert Y\Vert_{H^\kappa} \lesssim_A\Vert\W\Vert_{H^k}.
\end{equation}
\item[b)] In $H^s$ we get
\begin{equation}
\Vert Y\Vert_{H^s} \lesssim_A\Vert\W\Vert_{H^s}.
\end{equation}
\item[c)] In $H^{s-\frac{1}{2}}$ we get
\begin{equation}
\Vert Y\Vert_{H^{s-\frac{1}{2}}} \lesssim_A\Vert\W\Vert_{H^{s-\frac{1}{2}}}.
\end{equation}
\end{itemize}
\begin{proof}
We note that proving $a)$ suffices. We begin with the definition of $Y$,
\[
Y:=\frac{\W}{1+\W},
\]
which is a smooth function of $\W$. So as long as $\W$ stays away from $-1$, we have the Moser estimate
\[
\|Y\|_{H^k} \leq C(\|\W\|_{L^\infty}, \| Y\|_{L^\infty}) \|\W\|_{H^k},
\]
which suffices by Sobolev embeddings. 
\end{proof}

\end{lemma}
\begin{lemma} \label{lem:b}
Assume $s > \frac{d+1}2$. Then the advection velocity $b$  satisfies the following Sobolev bounds
\begin{itemize}
\item[(i)] In terms of the control norm $A$ 
\begin{equation}\label{b-A}
\|b\|_{H^{s-\frac{1}{2},\frac{1}{2}}} \lesssim_A A , 
\end{equation}

\item[(ii)] In terms of the control norm $B$
\begin{equation}\label{b-B}
\|b\|_{H^{s,\frac{1}{2}}} \lesssim_A B, 
\end{equation}
\item[(iii)] In terms of the energy bound
\begin{equation*}
\|b\|_{H^{k,\frac{1}{2}}} \lesssim _A A(E^k)^{\frac{1}{2}}, \qquad k \geq 0.
\end{equation*}
\end{itemize}
\end{lemma}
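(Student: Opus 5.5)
Since $b = 2\Re P[R(1-\bar Y)] = 2\Re P[R] - 2\Re P[R\bar Y]$, I will estimate the two pieces separately. The linear piece is immediate: $P$ is a Fourier multiplier bounded on every $H^{\sigma,\theta}$, so $\|P R\|_{H^{\sigma,\frac12}} \le \|R\|_{H^{\sigma,\frac12}}$. For $\sigma = s-\frac12$ this gives $A$, for $\sigma = s$ it gives $B$, and for $\sigma = k$ it gives $(E^k)^{1/2}$ by \eqref{Ek-coercive}. Taking real parts costs nothing. The whole difficulty is therefore the bilinear term $P[R\bar Y]$.

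For that term I will use the holomorphic structure, exactly as in the proof of \eqref{deltab}. $R$ is holomorphic, so its $\alpha$-frequencies satisfy $\xi_\alpha\le 0$. $\bar Y$ is antiholomorphic, so its $\alpha$-frequencies satisfy $\xi_\alpha \ge 0$. Hence in $P[R\bar Y]$ only interactions where the $R$ frequency dominates survive. Decomposing in the $\alpha$ direction,
\[
P_\lambda P[R\bar Y] = \sum_{\mu\gtrsim\lambda} P_\lambda P[R_\mu \bar Y_{\lesssim \mu}].
\]
This is essentially the paraproduct $T_{\bar Y} R$ plus a high--high part. The $\frac12$ $\alpha$-derivative in the target norm always falls on a frequency $\lambda \lesssim \mu$, so it can be moved onto $R$. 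The full $H^\sigma$ regularity, however, is measured in all $d$ directions and may sit on either factor. For that I use the Moser bounds of Lemma~\ref{l:Y}: $\|Y\|_{H^{s-\frac12}}\lesssim_A A$, $\|Y\|_{H^s}\lesssim_A B$, $\|Y\|_{H^k}\lesssim_A \|\W\|_{H^k}$.

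The key steps are then two endpoint product estimates.

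The first endpoint is when the $H^\sigma$ derivatives fall on $R$. I bound $\bar Y$ in $L^\infty$ by Sobolev embedding, which is $\lesssim A$, and bound $\langle D\rangle^\sigma\langle D_\alpha\rangle^{\frac12}R$ in $L^2$. This uses $\lambda\lesssim\mu$ for the $\alpha$-weight.

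The second endpoint is when the derivatives fall on $\bar Y$, which is possible only in the high--high part. There I put $\langle D_\alpha\rangle^{\frac12}R_\mu$ in $L^\infty$ via Bernstein \eqref{b2}, which is controlled by $A$, and put $Y$ in $H^\sigma$. This gives $\|Y\|_{H^\sigma}\lesssim_A \|\W\|_{H^\sigma}$, and that is $\lesssim A$, $B$, or $(E^k)^{1/2}$ respectively. The intermediate distributions of derivatives follow by the standard Leibniz/interpolation argument, or directly on the Fourier side by the positivity reduction used in Section~\ref{s:ee}.

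For (ii) and (iii) the result is a bound of the form $C(A)\cdot(\text{high norm})$. Since $A\lesssim B$ and $A\lesssim (E^k)^{1/2}$ for $k\ge s$ (and the statement in (iii) is understood with $\lesssim_A$), this yields the claims.

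I expect the main obstacle to be (i), at the low regularity $\sigma=s-\frac12$. There one must simultaneously place a half $\alpha$-derivative and $s-\frac12$ full derivatives, while having only $s-\frac12$ derivatives on $Y$ and $s-\frac12$ (plus a half $\alpha$) on $R$. In the high--high interaction, the sum over $\mu\ge\lambda$ must converge using only $s-\frac12 > \frac d2$. I would handle it as in the proof of \eqref{deltab}: estimate $\lambda^{\frac12}\|R_\mu\|_{L^\infty}\lesssim \lambda^{\frac12}\mu^{\frac d2-s}\|R_\mu\|_{H^{s-\frac12,\frac12}}$ and sum using $\lambda\le\mu$. The condition $s>\frac{d+1}2$ gives the needed decay, and the $H^{s-\frac12}$ norm of $Y$ is absorbed by the algebra property of $H^{s-\frac12}$ (valid since $s-\frac12>\frac d2$).
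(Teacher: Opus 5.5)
Your proposal is correct and follows essentially the paper's argument. The paper likewise treats the linear $R$ term as trivial. For $P[R\bar Y]$ it uses the holomorphic/antiholomorphic frequency supports to move $\langle D_\alpha\rangle^{1/2}$ onto $R$ and splits $\langle D\rangle^\sigma$ between the two factors. It does this directly on the Fourier side after reducing to nonnegative Fourier coefficients, with no dyadic decomposition. It then closes with Sobolev embedding for $\|\bar Y\|_{L^\infty}$ and $\|\langle D_\alpha\rangle^{1/2}R\|_{L^\infty}$ and the Moser bounds of Lemma~\ref{l:Y}.

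One correction: your Littlewood--Paley localization is only in $\xi_\alpha$, so the full derivatives can fall on $\bar Y$ in every $\alpha$-frequency configuration, not only the high--high one, and both endpoints are needed throughout. The positivity reduction handles this automatically. It also shows that $\sigma=s-\frac12$ poses no special difficulty, since both endpoint terms are then bounded by $\|R\|_{H^{s-\frac12,\frac12}}\|Y\|_{H^{s-\frac12}}$.
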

\begin{proof}
It suffices to prove (iii). We recall the definition of $b$
\begin{equation}
b = \Re \left(R - P[R\bar Y]\right).
\end{equation}
There is nothing to do for the first term. For the second term we harmlessly 
assume that the two factors have nonnegative Fourier transforms, which allows us to write
\begin{align}
\| \langle D_\alpha\rangle^{1/2} P[R\bar Y]\|_{H^k} \lesssim & 
\| \la D\ra^{k}  \langle D_\alpha\rangle^{1/2} P[R\bar Y]\|_{L^2}
\\
\lesssim & \|\la D \ra^{ k}  \langle D_\alpha\rangle^{1/2} R \bar Y\|_{L^2} +  \|\langle D_\alpha\rangle^{1/2} R \la D\ra^{ k} \bar Y\|_{L^2}
\\
\lesssim & \|\la D\ra^{ k} \langle D_\alpha\rangle^{1/2} R\|_{L^2} \|\bar Y\|_{L^\infty} +  \| \langle D_\alpha\rangle^{1/2}  R\|_{L^\infty} \|\la D\ra^{k} \bar Y\|_{L^2}
\\
\lesssim & \| R\|_{H^{k,\frac12}} \| Y\|_{H^{s-\frac12}} +  \| R\|_{H^{s-\frac12,\frac12}} \| Y\|_{H^k}
\end{align}
as needed.

\end{proof}

\begin{lemma} \label{lem:a} 
Assume $s > \frac{d+1}2$.
Then for the quadratic expression $a$, defined in \eqref{defa},  the following bounds hold:

\begin{itemize}
\item[(i)] Positivity:
\[ 
a\geq 0.
\]
\item[(ii)] In terms of the control norm $A$ we have
\begin{equation}\label{a:A}
\|a\|_{H^{s-\frac12}} \lesssim A^2. 
\end{equation}
\item[(iii)] In terms of the control norm $B$ we have the bound
\begin{equation}\label{a:B}
\|a\|_{H^s} \lesssim AB. 
\end{equation}
\item[(iv)]  In terms of the energy functional the following holds
\[
\|a\|_{H^{k,\frac{1}{2}}} \lesssim  B(E^k)^{\frac{1}{2}}.
\]
\item[(v)] Material derivative: 
\[
\| D_t a\|_{L^\infty} \lesssim_A B.
\]
\end{itemize}
\end{lemma}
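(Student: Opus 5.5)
The plan is to treat $a$ as a bilinear form in the holomorphic variable $R$ and work frequency by frequency in the $\alpha$ direction. On $\T^d$ the projections $P,\bar P$ act through the scalar frequency $\xi_\alpha=\langle \bfj,\bfk\rangle$. Hence every computation for $a$ on the line from \cite{HIT} can be redone pointwise in $\balpha$, with the lattice frequencies $\xi_\alpha<0$ replacing $\xi<0$.

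\emph{Positivity (i).} I will expand $R=\sum_{\xi_\alpha<0} \hat R_{\bfj}e^{i\langle\bfj,\balpha\rangle}$ (recall $R$ has zero mean and is holomorphic) and write both $P[R\bar R_\alpha]$ and $\bar P[\bar R R_\alpha]$ as double sums over pairs $(\bfj,\bfj')$. Here $P$ keeps the pairs with $\xi_\alpha<\xi'_\alpha$ and half of the diagonal $\xi_\alpha=\xi'_\alpha$, and $\bar P$ keeps the complementary pairs. After collecting terms, the coefficient of $\hat R_{\bfj}\bar{\hat R}_{\bfj'}e^{i\langle \bfj-\bfj',\balpha\rangle}$ in $a$ should be $\min(|\xi_\alpha|,|\xi'_\alpha|)$, the diagonal half-weights recombining to the full weight. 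Using $\min(x,y)=\int_0^\infty 1_{\rho<x}1_{\rho<y}\,d\rho$, this gives
\[
a(\balpha)=\int_0^\infty \Big|\sum_{|\xi_\alpha|>\rho}\hat R_{\bfj}e^{i\langle\bfj,\balpha\rangle}\Big|^2 d\rho\ \ge 0 .
\]
For $R\in H^{s,\frac12}$ the sum converges absolutely, so this identity is legitimate.

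\emph{Sobolev bounds (ii)--(iv).} The key structural fact is that in $P[R\bar R_\alpha]$ the antiholomorphic factor $\bar R_\alpha$ must carry $\alpha$-frequency no larger than that of $R$. The same holds with roles swapped in the conjugate term. Thus, as in the $\delta M$ estimate above, I will write $P[R\bar R_\alpha]=\sum_{\mu\le\lambda}P[R_\lambda \bar R_{\mu,\alpha}]$, so that the $\partial_\alpha$ always lands on the lower frequency. For (ii) I will move $\mu\le\lambda$ derivatives onto the high factor and use the algebra property of $H^{s-\frac12}$ together with Bernstein (Lemma~\ref{lem:Bernstein}). Effectively this bounds $\|a\|_{H^{s-\frac12}}\lesssim \|\langle D_\alpha\rangle^{\frac12}R\|_{H^{s-\frac12}}^2\lesssim A^2$. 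Equivalently, one can view it as $[R,P]\partial_\alpha\bar R$ and apply Lemma~\ref{l:comb}. For (iii) the same splitting puts one factor in $H^{s,\frac12}$ and the other in $L^\infty$ via the Bernstein bound \eqref{b2}, giving $AB$. For (iv) I will use the Leibniz/extreme-case argument of Lemma~\ref{lem:b}: the top derivatives fall on one factor, measured in $H^{k,\frac12}$, and the other is put in $L^\infty$, controlled by $B$.

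\emph{Material derivative (v).} This is where I expect the main difficulty. I will write
\[
D_t a = i\big(\bar P[D_t(\bar R R_\alpha)]-P[D_t(R\bar R_\alpha)]\big)+i\big([b,\bar P]\partial_\alpha(\bar R R_\alpha)-[b,P]\partial_\alpha(R\bar R_\alpha)\big).
\]
The commutator part is handled by Lemma~\ref{l:comb} and Lemma~\ref{lem:b}, together with Sobolev embedding into $L^\infty$. Note that $s>\frac{d+1}{2}$ leaves room, so an $H^{s-\frac12}$-type bound suffices for $L^\infty$. In the main part I use $D_t R_\alpha=\partial_\alpha D_tR-b_\alpha R_\alpha$ and substitute $D_tR=i(\W-a)/(1+\W)$. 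The terms with $b_\alpha$ are fine by Lemma~\ref{lem:b}, and $D_t\bar R$ multiplied by $R_\alpha$ is fine since $\|D_tR\|_{L^\infty}\lesssim_A 1$ and $\|R_\alpha\|_{L^\infty}\lesssim B$. The delicate term is $P[R\,\partial_\alpha \overline{D_tR}]$. There $\partial_\alpha \overline{D_t R}$ contains $\bar\W_\alpha$, which is only in $H^{s-1}$ and not in $L^\infty$. I will first try to exploit the holomorphic structure again: inside $P$ the antiholomorphic factor has lower $\alpha$-frequency than $R$. So I will transfer the $\partial_\alpha$ onto $R$, i.e.\ treat this term as $[R,P]\partial_\alpha \overline{D_tR}$. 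I will then bound $\sum_{\mu\le\lambda}\|R_\lambda\|_{L^\infty}\mu\|(D_tR)_\mu\|_{L^\infty}$ by $\|R\|_{H^{s,\frac12}}\|D_tR\|_{L^\infty}$-type quantities via \eqref{b2}, which yields $\lesssim_A B$. The $a$-contribution inside $D_tR$ is handled the same way, using (ii).
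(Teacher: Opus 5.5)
Parts (i)--(iv) are fine and essentially follow the paper. For (i) you use the kernel $\min(|\xi_\alpha|,|\xi'_\alpha|)$ plus the layer-cake identity. For (ii)--(iv) you use the $\alpha$-frequency ordering in $P[R\bar R_\alpha]$ to split $\partial_\alpha$ as $|D_\alpha|^{\frac12}R\cdot|D_\alpha|^{\frac12}\bar R$ inside a Sobolev product estimate.

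\textbf{The gap is in (v).} Your key step treats $P[R\,\partial_\alpha\overline{D_tR}]$ as $[R,P]\partial_\alpha\overline{D_tR}$. That requires $\overline{D_tR}$ to be antiholomorphic, and it is not. Indeed $D_tR=R_t+bR_\alpha$ with $b$ real (equivalently $D_tR=i(\W-a)/(1+\W)$ with $a$ real), so $\bar P[D_tR]=\bar P[bR_\alpha]\neq 0$. The leftover $R\,\partial_\alpha\overline{\bar P[bR_\alpha]}$ comes from $b$ and $R_\alpha$ at comparable high $\alpha$-frequency $\nu$, with the extra $\partial_\alpha$ also at frequency $\nu$. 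Two full derivatives then sit on the two high-frequency factors, none can be moved to the low-frequency $R$, and you only get $\lesssim_A B^2$.

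The same interaction sits in the two pieces you call harmless:
\begin{itemize}
\item $P[Rb_\alpha\bar R_\alpha]$: Lemma~\ref{lem:b} only gives $\|R\|_{L^\infty}\|b_\alpha\|_{L^\infty}\|R_\alpha\|_{L^\infty}\lesssim_A B^2$.
\item $[b,P]\partial_\alpha(R\bar R_\alpha)$: for $b,\bar R$ at frequency $\nu$ and $R$ at low frequency this is just $-P[b_\nu\,\partial_\alpha(R_{\mathrm{low}}\bar R_{\nu,\alpha})]$, with no commutator gain. Lemma~\ref{l:comb} also has only $L^2$ and $H^{0,\frac12}$ outputs, which do not embed in $L^\infty$.
\end{itemize}

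This is not just a lossy bound. Take $d=1$, $1<s<\frac54$, $\W=0$ (so $b=R+\bar R$), and $R=ie^{-i\alpha}+\nu^{-\frac12-s}\sum_{j=\nu}^{2\nu}e^{-ij\alpha}$. Then $A\approx 1$ and $B\approx\nu^{\frac12}$, while your commutator contribution $2\Im\,[b,P]\partial_\alpha(R\bar R_\alpha)$ at $\alpha=0$ is of size $\approx\nu^{3-2s}\gg C(A)B$. These large pieces cancel only in the sum
\[
P[D_t(R\bar R_\alpha)]+[b,P]\partial_\alpha(R\bar R_\alpha)=P[\partial_t(R\bar R_\alpha)]+b\,\partial_\alpha P[R\bar R_\alpha].
\]
So estimating the two halves of your decomposition separately cannot close for $s$ near $\frac{d+1}2$. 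Separately, $P$ is not bounded on $L^\infty$, so your $L^\infty\times L^\infty$ bounds inside $P$ must be replaced by $H^{s-\frac12}$ algebra bounds; that part is easily repaired.

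\textbf{The missing idea is the paper's paradifferential material derivative.} Since $T_b\partial_\alpha$ commutes with $P$ and preserves holomorphy, one writes
\[
D_ta=2\Im P[(\partial_t+T_b\partial_\alpha)(R\bar R_\alpha)]+(b-T_b)\partial_\alpha a .
\]
Then $(\partial_t+T_b\partial_\alpha)R=D_tR-(b-T_b)R_\alpha$ is genuinely holomorphic, with $H^{s-\frac12}$ norm $\lesssim_A A^2$. Your derivative transfer therefore becomes legitimate and gives $\lesssim\|R_\alpha\|_{H^{s-\frac12}}A^2$.

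The high-frequency part of $b$ now appears only in $(b-T_b)\partial_\alpha a$. This term is bounded in $H^{s-\frac12}$ by $\|b\|_{H^{s-\frac12,\frac12}}\|a\|_{H^s}\lesssim_A B$ via (iii), i.e.\ using the holomorphic structure of $a$ rather than that of the unprojected product $R\bar R_\alpha$. The remaining paraproduct commutators $P[RT_{b_\alpha}\bar R_\alpha]$ and $P[T_b\partial_\alpha(R\bar R_\alpha)-T_bR_\alpha\bar R_\alpha-RT_b\bar R_{\alpha\alpha}]$ carry derivatives only on lower-frequency factors, so they close with one factor of $B$.
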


\begin{proof}
Recall the definition of the real frequency-shift $a$  
\begin{equation}
a := i(\bar P[\bar R R_\alpha] - P[R\bar R_\alpha]) = 2\Im P[R\bar R_\alpha].
\end{equation}
(i) Here we adapt the proof in \cite{HIT} to the quasi-periodic setting. 
We denote $\mu_\bfj := \la \bfj, \bfk \ra$ and express $a$ by
\[
\begin{aligned}
a = &\sum_{\mu_{\bds j_1}<0, \mu_{\bds j_2} <0} \min(|\mu_{\bds j_1}|, |\mu_{\bds j_2}|) 
 \hat{R}_{\bds j_1} \bar{\hat{R}}_{\bds j_2} e^{i(\bds j_1 - \bds j_2)\cdot \balpha} \\
= & \sum_{\mu_{\bds j_1}<0, \mu_{\bds j_2} <0}\int_{0}^\infty 1_{\{M \leq |\mu_{\bds j_1}|\}} 1_{\{M \leq |\mu_{\bds j_2}|\}}\; dM  \hat{R}_{\bds j_1} \bar{ \hat{R}}_{\bds j_2} e^{i(\bds j_1 - \bds j_2)\cdot \balpha} \\
= & \int_0^\infty \left|\sum_{\mu_{\bds j} < 0} 1_{\{M \leq |\mu_{\bds j}|\}}  \hat{R}_{\bds j}\right|^2 \; dM \geq 0.
\end{aligned} 
\]
Therefore, we know that $a$ is non-negative.

\bigskip
(ii) Assuming that the two factors have nonnegative Fourier transforms
and using the frequency supports of the two factors and the projection, we write
\[
\begin{aligned}
\|P[R\bar R_\alpha]\|_{H^{s-\frac12}} 
\lesssim &\ \| |D_\alpha|^\frac12 R 
 |D_\alpha|^\frac12 \bar R \|_{H^{s-\frac12}} 
 \\
 \lesssim & \ \| |D_\alpha|^\frac12 R\|_{ H^{s-\frac12}}
 \||D_\alpha|^\frac12 \bar R \|_{H^{s-\frac12}} 
 \\
 \lesssim & \ \|R\|_{H^{s-\frac12,\frac12}}^2 \lesssim A^2.
\end{aligned}
\]

(iii) We argue as in the previous case but unbalance the frequency powers
\[
\begin{aligned}
\|P[R\bar R_\alpha]\|_{H^{s}} 
\lesssim &\ \| |D_\alpha|^\frac12 R 
 |D_\alpha|^\frac12 \bar R \|_{H^{s}} 
 \\
 \lesssim & \ \| |D_\alpha|^\frac12 R\|_{ H^{s-\frac12}}
 \| |D_\alpha|^\frac12 \bar R \|_{H^{s}}
 + \| |D_\alpha|^\frac12 R\|_{ H^{s}}
 \| |D_\alpha|^\frac12 \bar R \|_{H^{s-\frac12}}
 \\
 \lesssim & \ \|R\|_{H^{s-\frac12,\frac12}}\|R\|_{H^{s,\frac12}} \lesssim AB.
\end{aligned}
\]

(iv) In the same spirit, 
\[
\begin{aligned}
\|P[R\bar R_\alpha]\|_{H^{k,\frac12}} 
\lesssim &\ \| \langle D\rangle^k \la D_\alpha\ra^\frac12 
P[R\bar R_\alpha]\|_{L^2}
\\
\lesssim & \
\| \la D_\alpha\ra^\frac12 
P[ \langle D\rangle^k  R\bar R_\alpha]\|_{L^2}
+ \|\la D_\alpha\ra^\frac12 
P[R  \langle D\rangle^k   \bar{R}_\alpha ]\|_{L^2}
\\
\lesssim & \
\|  
\langle D\rangle^k \la D_\alpha\ra^\frac12 R\bar R_\alpha\|_{L^2}
+ \| 
\la D_\alpha \ra R  \langle D \rangle^k \la D_\alpha\ra^\frac12 \bar{ R}\|_{L^2}
\\
\lesssim & \ \|\langle D\rangle^k \la D_\alpha\ra^\frac12 R\|_{L^2} \| \la D_\alpha \ra R \|_{L^\infty}
\\
\lesssim & \| R\|_{H^{k,\frac12}}
\| R\|_{H^{s,\frac12}} \lesssim  B(E^k)^{\frac{1}{2}}.
\end{aligned}
\]

(v) We begin with the following computation
\[
\begin{aligned}
D_t a = & \  ( \partial_t + T_b \partial_\alpha) a + (b-T_b) \partial_\alpha a
\\
= & \ 2\Im P[ ( \partial_t + T_b \partial_\alpha) (R \bar R_\alpha)] + (b-T_b) \partial_\alpha a  
\\
= & \ 2\Im P[ ( \partial_t + T_b \partial_\alpha) R \bar R_\alpha] +
2\Im P[ R \partial_\alpha (\partial_t + T_b \partial_\alpha) \bar R] - 2\Im P[ R  T_{b_\alpha} \bar R_\alpha] \\
& \ 
+ 2\Im P[ T_b \partial_\alpha ( R \bar R_\alpha)
- T_b R_\alpha \bar R_\alpha
-  R T_b \bar R_{\alpha\alpha}]
+ (b-T_b) \partial_\alpha a.
\end{aligned}
\]
It suffices to bound each of the terms in $H^{s-\frac12}$. We begin with the first two terms, which involve  paramaterial derivatives, and are bounded by
\[
\|P[ ( \partial_t + T_b \partial_\alpha) R \bar R_\alpha]\|_{H^{s-\frac12}}+ \| P[ R \partial_\alpha (\partial_t + T_b \partial_\alpha) \bar R] \|_{H^{s-\frac12}} \lesssim 
\|R_\alpha\|_{H^{s-\frac12}}
\| (\partial_t + T_b \partial_\alpha) \bar R\|_{H^{s-\frac12}}
\]
where $\|R_\alpha\|_{H^{s-\frac12}} \lesssim B$,
and for the second factor we need the intermediate step
\begin{equation}\label{para-diff-R}
\| (\partial_t + T_b \partial_\alpha)  R\|_{H^{s-\frac12}}\lesssim_A A^2.
\end{equation}
Here we can replace $T_b$ with $b$ by estimating the error by a variation  of Lemma~\ref{l:para-err},
\[
\| (b-T_b) \partial_\alpha R \|_{H^{s-\frac12}}
\lesssim \|b\|_{H^{s-\frac12,\frac12}} \| R\|_{H^{s-\frac12,\frac12}} \lesssim_A A^2,
\]
where we have used the bound \eqref{b-A} for $b$.
For the material derivative of $R$ we can directly use the second equation in \eqref{ww2d-diff-real}, together with the $H^{s-\frac12}$ bounds for $\W$ and for $a$ (see \eqref{a:A}); this concludes the proof of \eqref{para-diff-R}.

A similar strategy applies for the last term in $D_t a$, where we can use the bound~\eqref{a:B} to write
\[
\| (b-T_b) \partial_\alpha a \|_{H^{s-\frac12}}
\lesssim \|b\|_{H^{s-\frac12,\frac12}} \| a\|_{H^{s}} \lesssim_A B.
\]
It remains to consider the two commutator terms in $D_t a$. Here we use Littlewood-Paley decompositions in the $\alpha$ direction in order 
to take advantage of the frequency ordering imposed 
by the paraproducts and by the projector $P$.
For the first commutator term we have the following frequency ordering
\[
P[ R  T_{b_\alpha} \bar R_\alpha] = 
\sum_{\nu < \mu \leq \lambda}
P[ b_{\nu,\alpha} \bar R_{\mu,\alpha} R_\lambda].
\]
This allows us to estimate, using almost orthogonality and \eqref{b-A},
\[
\begin{aligned}
\|P[ R  T_{b_\alpha} \bar R_\alpha] \|_{H^{s-\frac12}}^2 \lesssim  & \ \sum_{\nu < \mu \leq \lambda}
\| b_{\nu,\alpha}\|^2_{L^\infty} \|\bar R_{\mu,\alpha}\|_{L^\infty}^2 \lambda^{2s-1}\|R_\lambda\|_{L^2}^2 
\\
\lesssim & \  
\|b\|_{H^{s-\frac12,\frac12}}^2\|\bar R\|_{H^{s,\frac12}}^2
\|R\|_{H^{s-\frac12,\frac12}}^2 \sup_{\lambda}
\sum_{\nu < \mu \leq \lambda} \frac{\nu}{\lambda}
\\ 
\lesssim_A & \ A^4 B^2.
\end{aligned}
\]
Here it was important that we always have a derivative on the lowest frequency factor.

For the second commutator term we similarly denote by $\nu$, $\mu$ and $\lambda$ the dyadic frequencies of $b$, $R$ and $\bar R$, noting that 
all terms vanish unless $\nu,\mu \leq \lambda$.
We consider three cases:

a) $\mu = \nu = \lambda$, where we easily estimate 
each term separately. 

b) $\mu < \lambda$, in which case full cancellation occurs unless $\mu \leq \nu \leq \lambda$.

c) $\nu < \mu \approx \lambda$, in which case 
we need to estimate the expression
\[
(b_\nu - T_{b_\nu}) \partial_\alpha ( R_\lambda \bar R_{\mu,\alpha}).
\]
The first two cases are similar to the previous estimate, so we consider the third one. There 
the output vanishes unless the product 
$R_\lambda \bar R_{\mu,\alpha}$ is at frequency $\lesssim \nu$, therefore we can estimate
\[
\| (b_\nu - T_{b_\nu}) \partial_\alpha ( R_\lambda \bar R_{\mu,\alpha})\|_{H^{s-\frac12}}^2
\lesssim \sum_{\nu < \mu \approx \lambda}
\lambda^{2s-1} \nu^2 \| b_\nu\|_{L^\infty} 
\| \bar R_\mu\|_{L^\infty} \|R_\lambda\|_{L^2}^2
\]
after which this is similar to the earlier computation.

\end{proof}

The following lemma is also needed in our quest to show the RHS of the first equation in \eqref{ww2d-diff-real} plays a perturbative role.
\begin{lemma} \label{lem:M} Assume $s > (d+1)/2$. For the  auxiliary function $M$ we have:
\begin{itemize}
\item[(i)] The control norms allow for a bound as follows
\begin{equation}
\| M\|_{H^{s-\frac{1}{2}}}\lesssim AB.
\end{equation}

\item[(ii)] An energy bound also holds true
\begin{equation}
\left\|M\right\|_{H^k} \lesssim B(E^k)^{\frac{1}{2}}.
\end{equation}

\end{itemize}
\end{lemma}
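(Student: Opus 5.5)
We argue term by term from the representation
\[
M = \bar P [\bar R Y_\alpha- R_\alpha \bar Y]  + P[R \bar Y_\alpha - \bar R_\alpha Y].
\]
Since $M$ is real up to conjugation symmetry, it suffices to bound the two holomorphic pieces $P[R\bar Y_\alpha]$ and $P[\bar R_\alpha Y]$. Lemma~\ref{l:Y} lets us replace $Y$ by $\W$ in all Sobolev norms, at the cost of constants depending on $A$. As in the proofs of Lemmas~\ref{lem:b} and \ref{lem:a}, we may assume that all Fourier coefficients are nonnegative, so that norms are monotone in the Fourier sizes.

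The key structural point is the frequency ordering imposed by $P$. In $P[R\bar Y_\alpha]$ the factor $R$ has $\xi_\alpha<0$ and $\bar Y_\alpha$ has $\xi_\alpha>0$. The output survives only if $|\xi_\alpha|$ for $R$ is at least that of $\bar Y$. Thus the $\alpha$-derivative on $\bar Y$ can be transferred to $R$. After a Littlewood--Paley decomposition in the $\alpha$ direction this gives
\[
P[R\bar Y_\alpha] = \sum_{\mu\le\lambda} P[R_\lambda \bar Y_{\mu,\alpha}],
\]
and morally this behaves like $|D_\alpha|^{\frac12}R\cdot|D_\alpha|^{\frac12}\bar Y$. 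The same holds for $P[\bar R_\alpha Y]$, where $Y$ is at higher $\alpha$-frequency than $\bar R$. This mirrors exactly the argument in Lemma~\ref{lem:a}(ii)--(iii).

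For (i) we write, schematically,
\[
\|P[R\bar Y_\alpha]\|_{H^{s-\frac12}}\lesssim \| |D_\alpha|^{\frac12}R\cdot|D_\alpha|^{\frac12}\bar Y\|_{H^{s-\frac12}}.
\]
We then use the algebra property of $H^{s-\frac12}$, valid since $s-\frac12>\frac d2$, with unbalanced distribution of the extra half derivative. This gives the bound
\[
\|R\|_{H^{s-\frac12,\frac12}}\|Y\|_{H^{s}} + \|R\|_{H^{s,\frac12}}\|Y\|_{H^{s-\frac12}} \lesssim_A AB.
\]
Here we put the full $H^s$ norm on whichever factor carries the high frequency. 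Concretely, each dyadic piece is estimated by Bernstein (Lemma~\ref{lem:Bernstein}) on the lower-frequency factor and by $L^2$ on the higher one, and the sum over $\mu\le\lambda$ is controlled using $\mu^{\frac12}\lambda^{-\frac12}$ gains. The second term $P[\bar R_\alpha Y]$ is symmetric.

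For (ii) we proceed as in Lemma~\ref{lem:a}(iv): distribute $\langle D\rangle^k$ via the Leibniz/Fourier-positivity argument to the two extreme cases. When the $k$ derivatives fall on $R$, we put $\langle D\rangle^k\la D_\alpha\ra^{\frac12}R$ in $L^2$, giving $\|R\|_{H^{k,\frac12}}$, and $\la D_\alpha\ra^{\frac12}Y$ in $L^\infty$, giving $\lesssim\|\W\|_{H^s}\lesssim B$. When they fall on $Y$, we put $\la D_\alpha\ra R$ in $L^\infty$, bounded by $B$ by Sobolev embedding, and $\langle D\rangle^kY$ in $L^2$, bounded via Lemma~\ref{l:Y}(a) by $\|\W\|_{H^k}$. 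Intermediate cases follow by interpolation.

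The main obstacle is the bookkeeping of the half-derivative transfer in the $\alpha$ direction only. We must check that the Bernstein bound \eqref{b2} with exponent $\frac{d-1}2-s<-\frac12$ gives summable dyadic sums uniformly, and that the energy $E^k$ controls $\|\W\|_{H^k}$ without a half derivative. In case (ii) this requires putting the $\la D_\alpha\ra^{\frac12}$ on the $R$ factor, or on the low-frequency $Y$ factor, never on the high-frequency $Y$ factor. The frequency ordering from $P$ makes exactly this possible.
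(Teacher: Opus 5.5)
Your proposal is correct and follows essentially the paper's proof: Fourier positivity, the frequency ordering imposed by $P$ (resp.\ $\bar P$) to move half or all of the $\alpha$-derivative onto $R$, and the split of $\langle D\rangle^k$ into the two extreme cases, with one factor in $L^2$ and the other in $L^\infty$ via Sobolev embedding. The only cosmetic difference is that you prove (i) directly, while the paper obtains it as the case $k=s-\frac12$ of (ii); both routes give the same bound $\|R\|_{H^{s-\frac12,\frac12}}\|Y\|_{H^s}+\|R\|_{H^{s,\frac12}}\|Y\|_{H^{s-\frac12}}$.
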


\begin{proof}
Note that (i) is a special case of (ii) with $k = s-\frac12$.
Recall the definition of $M$
\[
M= \frac{R_\alpha}{1+\bar{\W}} + \frac{\bar R_\alpha}{1+\W} - b_\alpha  = \bar P[\bar RY_\alpha - R_\alpha \bar Y] + P[R\bar Y_\alpha - \bar R_\alpha Y].
\]
We are going to show that the  first term in $M$ can be bounded by the right-hand side of the inequality and the proof for the other three terms follows similarly. We will show that
\[
\| \bar P[\bar R Y_\alpha]\|_{H^k} 
\lesssim \| R\|_{H^{k,\frac12}} \Vert Y\Vert_{H^s} +\| R\|_{H^{s,\frac{1}{2}}} \Vert Y\Vert_{H^k}.
\]
This is a convolution estimate in the Fourier space, so it suffices to consider functions with positive Fourier transform. Given that $R$ and $Y$ are holomorphic, the projector $\bar P$ guarantees that the $\alpha$ frequency of $Y$ must be smaller than the $\alpha$ frequency of $\bar R$.
Then we can write 
\[
\begin{aligned}
\| \langle D\rangle^k \bar P[\bar R Y_\alpha]\|_{L^2}
\lesssim & \ \| \langle D\rangle ^{k} |D_\alpha|^\frac12 R 
 |D_\alpha|^\frac12 Y \|_{L^2}
+ \| |D_\alpha| R 
\langle D\rangle ^k Y \|_{L^2} \\
\lesssim & \  \|\langle D\rangle ^{k} |D_\alpha|^\frac12 R \|_{L^2}
\||D_\alpha|^\frac12 Y \|_{L^\infty}
+ \| |D_\alpha| R\|_{L^\infty}
\| \langle D\rangle ^k Y \|_{L^2}, 
\end{aligned}
\]
after which it suffices to use Sobolev embeddings for $R$ and $Y$.

\end{proof}

\section{Construction of regular solutions}
\label{s:existence}

We recall the equations 
\begin{equation} \label{ww2d-diff-re}
\left\{
\begin{aligned}
 & \W_{ t} + b \W_{ \alpha} + \frac{(1+\W) R_\alpha}{1+\bar \W}   =  (1+\W)M,
\\
& R_t + bR_\alpha  = i(1+a)\frac{\W}{1+\W} -i a.
\end{aligned}
\right.
\end{equation}
Here  we devise an iterative scheme 
to solve them in higher regularity spaces.
Precisely we will prove the following 

\begin{theorem}
\label{t:existence}
Let $k > \frac{d+1}2$  be an integer. Then for any initial data $(\W_0,R_0) \in \H^k$ for 
the differentiated gravity wave system \eqref{ww2d-diff-re} there exists a local solution $(\W, R)$ in $C([0,T];\H^k)$ with $T$ depending on the size of the initial data.     
\end{theorem}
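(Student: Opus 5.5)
We construct solutions by a regularized iteration scheme whose uniform bounds come from the higher energy estimates of Theorem~\ref{t:ee}, and then pass to the limit by compactness plus the difference bounds of Theorem~\ref{thm:diff}.

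\textbf{Regularized problem.} Fix a frequency cutoff $\epsilon>0$ (or dyadic parameter $N$). We regularize \eqref{ww2d-diff-re} by inserting the projector $P^\sharp$ on both sides, so the flow stays in the space of zero-mean holomorphic functions. We then truncate all nonlinear terms with a smooth Fourier multiplier $J_\epsilon = \chi(\epsilon \langle D\rangle)$ acting in the full torus frequency. The transport terms are written symmetrically, e.g.\ $J_\epsilon P^\sharp[b\, \partial_\alpha J_\epsilon \W]$. The resulting system is an ODE on the Banach space $J_\epsilon \H^k$, with locally Lipschitz right-hand side, since at frequency $\lesssim \epsilon^{-1}$ all derivatives are bounded. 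Picard's theorem therefore gives a local solution $(\W^\epsilon,R^\epsilon)$.

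We also choose $T$ small so that $\|\W^\epsilon\|_{L^\infty}\le \frac12$ persists. This is needed because $Y$ and $1/(1+\W)$ must make sense. We must arrange this smallness of $\W$ from the data; for large data we instead require $|1+\W|$ bounded below, which is preserved for short time by continuity and the $L^\infty$ control of $\partial_t\W$.

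\textbf{Uniform bounds.} We apply the energy $E^k$ of Theorem~\ref{t:ee} to $(\W^\epsilon,R^\epsilon)$ and check that the proof goes through uniformly in $\epsilon$. Since $J_\epsilon$ is self-adjoint and commutes with $\partial$ and $P^\sharp$, the leading cancellation between the fourth term of \eqref{dt-w} and the first term of \eqref{dt-ralpha} survives with $J_\epsilon$ on both sides. The only new contributions are the commutators $[J_\epsilon, b]\partial_\alpha$ and $[J_\epsilon,(1-\bar Y)(1+\W)]\partial_\alpha$. These are bounded uniformly in $\epsilon$ by the same Littlewood–Paley argument as Lemma~\ref{l:comb}, since the symbol of $J_\epsilon$ is a uniformly bounded symbol of order zero. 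This gives
\[
\frac{d}{dt}E^k \lesssim_A B\, E^k, \qquad A,B\lesssim (E^k)^{1/2}\quad(k\geq s),
\]
so $E^k$ stays bounded on a time interval $[0,T]$ whose length depends only on $\|(\W_0,R_0)\|_{\H^k}$ and not on $\epsilon$. The ODE solutions therefore extend to $[0,T]$.

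\textbf{Passage to the limit.} We show that $(\W^\epsilon,R^\epsilon)$ is Cauchy in $C([0,T];\H^0)$. We repeat the proof of Theorem~\ref{thm:diff} for two regularizations $\epsilon,\epsilon'$. The extra source terms are of size $\lesssim (\epsilon+\epsilon')\|(\W,R)\|_{\H^{1}}\cdot C$, by the uniform $\H^k$ bounds with $k\ge 1$ and the estimate $\|(1-J_\epsilon)u\|_{L^2}\lesssim \epsilon\|u\|_{H^1}$. Gronwall then gives convergence in $\H^0$. Interpolating with the uniform $\H^k$ bound yields convergence in $C([0,T];\H^{k-\delta})$, and the limit solves \eqref{ww2d-diff-re}. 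Continuity in time with values in $\H^k$ itself, rather than merely weak continuity, is obtained by the standard Bona–Smith argument. This uses regularized data $J_\epsilon(\W_0,R_0)$ together with the difference bounds, and we defer it to Section~\ref{s:rough sol} if needed.

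\textbf{Main obstacle.} We expect the hardest step to be verifying that the regularization preserves the energy structure of Theorem~\ref{plin-reduced}. The symmetric placement of $J_\epsilon$ and the uniform commutator bounds, especially the analogue of \eqref{YW-com} with $J_\epsilon$ in place of $P^\sharp$, are what prevent a derivative loss.
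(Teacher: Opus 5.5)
Your route differs from the paper's. The paper does not mollify. It defines $(\W^{m+1},R^{m+1})$ by the projected \emph{linear} system whose coefficients $b^m$, $(1+\W^m)/(1+\bar \W^m)$ and $(1+a^m)T_{(1-Y^m)^2}$ are frozen at step $m$. It then applies Theorem~\ref{plin-reduced} directly to the differentiated iterates, with Lemma~\ref{l:para-err} handling the paraproduct error, and gets convergence from an $\H^0$ contraction. A Friedrichs/ODE scheme has the merit of needing no separate solvability theory for a linear problem. However, the step you yourself flag as the main obstacle does not go through as written. To begin with, Lemma~\ref{l:comb} does not transfer to $J_\epsilon$. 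Its proof discards every low--high interaction, because $P$ depends only on $\sgn \xi_\alpha$ and so commutes exactly with multiplication by a factor of much lower $\alpha$-frequency. A smooth cutoff $J_\epsilon=\chi(\epsilon\langle D\rangle)$ has a nonvanishing low--high commutator, with symbol $\approx \epsilon\chi'(\epsilon|\xi|)\,\xi\cdot\eta/|\xi|$ ($\eta$ the coefficient's frequency), which costs a full gradient of the coefficient. The two commutators you list happen to survive this, because $\nabla b\in H^{s-1,\frac12}\subset L^\infty$ and because $r$ carries an extra half $\alpha$-derivative. But this is not for the reason you give.

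More seriously, you have left out the weight $\omega:=(1+a)|1-Y|^2$ of $E_{lin}$. To turn $2\Re\int \omega\bar w\, J_\epsilon P^\sharp[b\,\partial_\alpha J_\epsilon w]$ into a perfect derivative, you must move $J_\epsilon$ across $\omega$. That leaves $[J_\epsilon,\omega]\bar w$ paired with $b\,\partial_\alpha J_\epsilon w$, a term of size $\|\nabla \omega\|_{L^\infty}\|w\|_{L^2}^2$ that does not cancel on taking real parts. The regularization also breaks the material-derivative structure in two ways:
\begin{itemize}
\item The transport term now produces $-\int b\,\omega_\alpha|J_\epsilon w|^2$, while differentiating the weight produces $\int\omega_t|w|^2$, so they no longer combine into $\int D_t\omega\,|w|^2$.
\item The bound $\|D_t\omega\|_{L^\infty}\lesssim_A B$ in \eqref{b-est0} rests on the exact identity $D_t\W=(1+\W)\big(M-(1-\bar Y)R_\alpha\big)$. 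In your flow, $\W_t+b\W_\alpha$ carries the extra error $b\W_\alpha-J_\epsilon P^\sharp[b\,\partial_\alpha J_\epsilon \W]$.
\end{itemize}
All three defects need a full derivative of $\W$ in $L^\infty$, uniformly in $\epsilon$. $B$ does not give this; it only controls $\langle D_\alpha\rangle^{\frac12}\W$ in $L^\infty$. Using $\|(\W,R)\|_{\H^k}$ as the control instead, the estimates close only when $H^{k-1}(\T^d)\subset L^\infty$, i.e.\ $k>\frac d2+1$. This misses the admissible borderline $k=\frac d2+1$ for even $d$, already the basic case $d=2$, $k=2$. The same issue recurs in your $\epsilon$ versus $\epsilon'$ Cauchy argument.

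A repair within your framework has three steps. First, run the mollified scheme only for smooth data, where $\nabla\W\in L^\infty$ is available. Second, continue these smooth solutions up to a time depending only on the $\H^k$ norm, using Theorem~\ref{t:ee}, whose growth rate is governed by $A$ and $B$. Third, reach general $\H^k$ data by approximation and Theorem~\ref{thm:diff}. Alternatively, follow the paper's linear iteration.
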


\begin{proof}
Uniqueness follows from Theorem~\ref{thm:diff}.
    To construct the solutions, we iteratively define a sequence of approximate solutions $(\W^m,R^m)$ by
\begin{equation} 
\left\{
\begin{aligned}
 & \W^{m+1}_{ t} + P^\sharp\left[ b^m \W^{m+1}_{ \alpha} + \frac{(1+\W^m) R^{m+1}_\alpha}{1+\bar \W^m} \right]  =  P^\sharp\left[(1+\W^m)M^m\right],
\\
& R^{m+1}_t + P^\sharp\left[b^mR^{m+1}_\alpha  - i(1+a^m)T_{(1-Y^m)^2} \W^{m+1}\right] = P^\sharp\left[i(1+a^m)( Y^m - T_{(1-Y^m)^2} \W^{m})  - i a^m\right].
\end{aligned}
\right.
\end{equation}
where the paraproduct is defined only relative to the $\alpha$ direction. 

Here we start with $(\W^0,R^0) = (0,0)$, and for $m \geq 1$ we set the initial data as
\[
(\W^{m}_0, R^m_0) =(\W_0, R_0). 
\]

We denote the initial data size by $K$, where
\[
K := \|(\W_0,R_0)\|_{\H^k}.
\]
Now our goal is to inductively prove 
\begin{enumerate}[label=(\roman*)]
\item Uniform bounds
\begin{equation}\label{ee-k}
\|   (\W^m,R^m)\|_{L^\infty([0,T];\H^k)} \lesssim C K  
\end{equation}
with a large universal constant $C$, on a short  time interval $[0,T]$ depending only on $K$.

\item Difference bounds
\begin{equation}\label{ediff-k}
\| (\W^{m+1},R^{m+1}) - (\W^m,R^m)\|_{L^\infty([0,T];\H^0)} \lesssim c  \|(\W^{m},R^{m}) - (\W^{m-1},R^{m-1})\|_{L^\infty([0,T];\H^0)}
\end{equation}
\noindent with a small constant $c < 1$.
\end{enumerate}
Assuming these properties hold, it follows that the sequence $(\W^m,R^m)$ converges in $L^\infty \H^0$  and is bounded in $L^\infty \H^k$. We denote the limit by $(\W,R)$, which is 
in $L^\infty H^k$. Further, by interpolation we have 
\[
(\W^m,R^m) \to (\W,R) \quad \text{in } L^\infty H^s, \qquad s < k.
\]
This suffices in order to guarantee that we can pass to the limit in the iteration scheme and conclude that $(\W,R)$ is a solution to \eqref{ww2d-diff-re} with initial data $(\W_0,R_0)$.

\medskip

It remains to prove the bounds \eqref{ee-k}
and \eqref{ediff-k}.

\medskip

\emph{ Proof of the uniform bound \eqref{ee-k}.}
This is similar to the proof of the energy estimates in Theorem~\ref{t:ee}. For a multiindex $\kappa$ of length $k$ we 
differentiate the equation for $(\W^{m+1},R^{m+1})$ $k$ times. 
The differentiated variables $(\W^{m+1,[\kappa]},R^{m+1,[\kappa]})$ solve a linear 
system of the form
\begin{equation}\label{diff-m-kappa} 
\left\{
\begin{aligned}
 & \W^{m+1,[\kappa]}_{ t} + P^\sharp\left[b^m \W^{m+1,[\kappa]}_{ \alpha} + \frac{(1+\W^m) R^{m+1,[\kappa]}_\alpha}{1+\bar \W^m} \right]  =  F^{m,[\kappa]},
\\
& R^{m+1,[\kappa]}_t + P^\sharp\left[b^mR^{m+1,[\kappa]}_\alpha  - i(1+a^m)T_{(1-Y^m)^2} \W^{m+1,[\kappa]}\right] = G^{m,[\kappa]},
\end{aligned}
\right.
\end{equation} 
where we claim that the source terms $(F^{m,[\kappa]}, G^{m,[\kappa]})$ satisfy the bound
\begin{equation}\label{FG-m-kappa}
\| (F^{m,[\kappa]}, G^{m,[\kappa]})\|_{\H^0}
\lesssim_{A_m,A_{m+1}} (B_m+B_{m+1}) (\| (\W^m,R^m)\|_{\H^k}+  \| (\W^{m+1},R^{m+1})\|_{\H^k}).
\end{equation}
The proof of this bound is almost identical 
to the proof of \eqref{FG-kappa}, with the only differences stemming from our use of the paradifferential decomposition. Precisely, from the left-hand side of the second equation we 
need to estimate the term 
\[
\| (1+a^m) ((1-Y^m)^2 - T_{(1-Y^m)^2}) \W^{m+1,[\kappa]}\|_{H^{0,\frac12}} \lesssim_{A_m} B_m \|\W^{m+1,[\kappa]}\|_{L^2},
\]
for which we can use Lemma~\ref{l:para-err}.
From the right-hand side of the second equation
we 
need to estimate the term 
\[
\| (1+a^m) (Y^{m, [\kappa]}  - T_{(1-Y^m)^2}) \W^{m,[\kappa]}\|_{H^{0,\frac12}} \lesssim_{A_m} B_m \|\W^{m}\|_{H^k}. 
\]
Applying the chain rule for the first term and peeling off good terms with distributed derivatives we are left with 
\[
\| (1+a^m) ((1-Y^m)^2  - T_{(1-Y^m)^2}) \W^{m,[\kappa]}\|_{H^{0,\frac12}} \lesssim_{A_m} B_m \|\W^{m}\|_{H^k},
\]
which is identical to the previous bound.

Once we have the bound \eqref{FG-m-kappa}, we can apply Theorem~\ref{plin-reduced} for the equation \eqref{diff-m-kappa} to obtain
\[
\frac{d}{dt} E_{lin}(\W^{m+1,[\kappa]},R^{m+1,[\kappa]})  
\lesssim_{A_m,A_{m+1}} (B_m + B_{m+1}) 
(\| (\W^m,R^m)\|_{\H^k}+  \| (\W^{m+1},R^{m+1})\|_{\H^k} ).
\]
Summing up over $\kappa$ with $|\kappa|\leq k$
and applying Gronwall's inequality we arrive at 
\[
\| (\W^{m+1},R^{m+1})(t)\|_{\H^k} \lesssim e^{C(A_m,A_{m+1})(B_m+B_{m+1}) t} (\| (\W_0,R_0)\|_{\H^k}  + t \| (\W^{m},R^{m})\|_{L^\infty(0,T;\H^k)} ).
\]
Assuming $T$ is small enough, $T \ll_K 1$,
this allows us to conclude the inductive proof of \eqref{ee-k}.

\medskip

\emph{ Proof of the difference bound \eqref{ediff-k}.} This is similar to the proof of the energy estimates for the linearized equation in Theorem~\ref{plin-short}. Subtracting the equations for $(\W^{m},R^{m})$ and $(\W^{m+1},R^{m+1})$ we obtain an equation for the difference, which we write as 
\begin{equation}\label{diff-m} 
\left\{
\begin{aligned}
 & (\W^{m+1}-W^m)_{ t} + P^\sharp\left[ b^m (\W^{m+1}-\W^m)_{ \alpha} + \frac{(1+\W^m) (R^{m+1}-R^m)_\alpha}{1+\bar \W^m} \right]  =  F^{m},
\\
& (R^{m+1}-R^m)_t +  P^\sharp\left[b^m (R^{m+1}-R^m)_\alpha  - (1+a^m)T_{(1-Y^m)^2 }(\W^{m+1}-\W^m)\right] = G^{m}.
\end{aligned}
\right.
\end{equation}
Here we claim that the source terms $(F^{m}, G^{m})$ satisfy the bound
\begin{equation}\label{FG-m}
\begin{aligned}
\| (F^{m}, G^{m})\|_{\H^0}
\lesssim_{A_m,A_{m+1}} & (B_m+B_{m+1}) (\| (\W^{m+1}-\W^m,R^{m+1}-R^m)\|_{\H^0}\\
&\hspace*{.3cm}+  \| (\W^m-\W^{m-1},R^m-R^{m-1})\|_{\H^0}).
\end{aligned}
\end{equation}
This is proved in the same manner as Theorem \ref{thm:diff}, with minor differences arising, as above, from our use of paraproducts
in our iterations. Details are omitted for brevity.
\end{proof}

\section{Rough solutions as limits of smooth solutions}

\label{s:rough sol}

 Our aim here is to prove our main local well-posedness result for quasiperiodic solutions 
for the differentiated water wave system in Theorem~\ref{thm:lwp}. The rough solutions  will be obtained as limits of the regular solutions in Theorem~\ref{t:existence},
using the $\H^k$ energy estimates in Theorem~\ref{t:ee} and the difference bounds in Theorem~\ref{thm:diff}. 

To begin with, consider holomorphic zero average initial data $(\W_0,R_0) \in H^s$, of size $K$, and so that 
\begin{equation}\label{nocorner}
  \| Y_0\|_{L^\infty} \leq C.
\end{equation}

We approximate this initial data with regularized initial data 
\[
(\W_0^m,R_0^m) = P_{\leq m} (\W_0,R_0), \qquad m \geq m_0.
\]
Here $m_0$ is taken large enough, depending on $K$ and $C$, so that the condition \eqref{nocorner} holds uniformly in $m$. 

These regularized data are smooth, so 
by Theorem~\ref{t:existence}, local solutions 
$(\W^m,R^m)$ exist, though apriori only on time intervals which depend on $m$. We aim to show that these solutions extend uniformly to a time interval $[0,T]$ depending only on $K$ and $C$,
and with uniform bounds in time. We will achieve this using a bootstrap argument.

 To set up the bootstrap we use the language 
 of frequency envelopes, following the procedure
 described in \cite{IT-primer}. To begin with, we place the initial data under a slowly varying $\ell^2$ normalized frequency envelope, 
 \[
\| P_m (\W_0,R_0)\|_{\H^0} \lesssim 2^{-ms} c_m K,
\]
so that 
\[
\|(\W_0,R_0)\|_{\H^s}^2 \approx \sum c_k^2 K^2 \approx K^2.
\]
Then for the sequence of initial data  we have 
bounds as follows:

\begin{enumerate}[label=$(\roman*)_0$]
    \item Uniform $H^s$ bound,
  \begin{equation}
  \| (\W_0^m,R_0^m)\|_{\H^s} \lesssim K.    
  \end{equation}  
\item Higher regularity,
 \begin{equation}
  \| (\W_0^m,R_0^m)\|_{\H^k} \lesssim  2^{(k-s)m}  c_m  K.
  \end{equation} 
\item Difference bound,
 \begin{equation}
  \| (\W_0^{m+1}-\W_0^m,R_0^{m+1} - R_0^m)\|_{\H^0} \lesssim  2^{-sm}  c_m  K.
 \end{equation} 
\end{enumerate}

We claim that these bounds transfer to the 
corresponding solutions on a time interval $[0,T]$ which depends only on $K$ and $C$,

\begin{enumerate}[label=$(\roman*)$]
    \item Uniform $H^s$ bound,
  \begin{equation}
  \| (\W^m,R^m)\|_{L^\infty[0,T;\H^s]} \lesssim K.     
  \end{equation}  
\item Higher regularity,
 \begin{equation}
  \| (\W^m,R^m)\|_{L^\infty[0,T;\H^k]} \lesssim  2^{(k-s)m}  c_m  K.
  \end{equation} 
\item Difference bound,
 \begin{equation}
  \| (\W^{m+1}-\W^m,R^{m+1} - R^m)\|_{L^\infty[0,T;\H^0]} \lesssim  2^{-sm}  c_m  K.
 \end{equation} 
\end{enumerate}

 We first prove these properties assuming 
 that the solutions exist uniformly on $[0,T]$
 and satisfy a bootstrap assumption
\begin{equation}\label{boot}
  \| (\W^m,R^m)\|_{L^\infty[0,T;\H^s]} \lesssim C_0 K     
  \end{equation} 
with a large universal constant $C_0$. The key point is that we want to prove (i)-(ii)-(iii)
with implicit constants which do not depend on $C_0$. To accomplish this, we will instead 
allow $T$ to depend on $C_0$. We also remark 
that in order to bootstrap only finitely many quantities, we can restrict the argument to a finite range of regularization scales $m_0 < m < m_1$, but where $m_1$ is allowed to be arbitrarily large. At the conclusion of the bootstrap argument we let $m_1 \to \infty$ 
to cover the full range.

The bootstrap assumption guarantees that 
the control parameters $A_m$ and $B_m$ 
satisfy the uniform bounds
\[
A_m, B_m \lesssim C_0 K.
\]
By Theorem~\ref{t:ee} and Gronwall's inequality 
we then obtain the bound 
\[
 \| (\W^m,R^m)(t)\|_{\H^k} \leq e^{t C(C_0 K)} 
 \| (\W_0^m,R_0^m)\|_{\H^k} \lesssim e^{t C(C_0 K)} 2^{(k-s)m}  c_m  K.
\]
Choosing $T$ small enough so that $T C(C_0 K) \leq 1$, we arrive at the bound in (ii). 

A similar argument but using instead Theorem~\ref{thm:diff} yields the difference bound in (iii). Finally to prove (ii)
we express $(\W^m,R^m)$ as a telescopic sum 
\[
(\W^m,R^m) = (\W^{m_0},R^{m_0}) + \sum_{n=m_0}^{m-1}  (\W^{n+1}-\W^n,R^{n+1} - R^n)
\]
using the bounds in (ii) and (iii) to estimate the differences in a higher and lower norm, in order to gain almost orthogonality of the summands.  For later use we note that the same argument also yields dyadic bounds
\begin{equation}\label{fe-m}
  \| P_n (\W^m,R^m)\|_{L^\infty[0,T;\H^s]} \lesssim c_n K,    
\end{equation}
which are useful for $n \leq m$, otherwise they are 
superseded by (ii). Another consequence 
is the difference bound 
\begin{equation}\label{diff-Hs}
  \| (\W^{m_2},R^{m_2}) - (\W^m,R^m)\|_{L^\infty[0,T;\H^s]} \lesssim c_{[m,m_2]} K, \qquad  c_{[m,m_2]}^2   = \sum_{n=m}^{m_2} c_n^2. 
\end{equation}

To complete the bootstrap we use a continuity 
argument.  Let $T_1 \leq T$ be maximal so that 
\eqref{boot} holds in $[0,T_1]$ for $m_0 \leq m \leq m_1$. If $T_1 = T$ then (i)-(ii)-(iii) hold in $[0,T]$ by the bootstrap argument above,
and we are done.

Otherwise, by the bootstrap argument, 
(i)-(ii)-(iii) hold in $[0,T_1]$. In particular 
(i) holds at $T_1$. Then Theorem~\ref{t:existence} shows that we can extend the solutions $(\W^m,R^m)$ as regular solutions beyond $T_1$, and by (i), \eqref{boot}
also holds beyond $T_1$. This contradicts the maximality of $T_1$. 

Now we consider the limit of $(\W^m,R^m)$ 
as $m \to \infty$. By \eqref{diff-Hs}, this limit exists in $L^\infty[0,T; \H^s]$. We denote it 
by $(\W,R)$ which as a uniform limit of continuous functions is also continuous in time with values in $\H^s$. Passing to the limit in the equations we also easily see that $(\W,R)$
solves \eqref{ww2d-diff-real}.
Finally, from \eqref{fe-m} we also obtain
\begin{equation}\label{fe-full}
  \| P_n (\W,R)\|_{L^\infty[0,T;\H^s]} \lesssim c_n K,       
\end{equation}
which shows that the frequency envelope bounds for the solutions are carried over from the initial data.

The remaining component of the proof is to show continuous dependence of the solution with respect to the initial data in the strong topology. This argument follows \cite{IT-primer} fully, and is omitted.

\section*{Acknowledgments}
The authors thank the workshop ``Nonlinear Water Waves: Rigorous Analysis and
Scientific Computing'' at Banff International Research Station, where this collaboration was initiated. M.I. gratefully acknowledges support from  the NSF grant DMS-2348908, from a Miller Visiting Professorship at UC Berkeley during the Fall semester of 2023,  from the Simons Foundation through a Simons Fellowship in the Spring semester of 2024, and from a Vilas Associate Fellowship.  J.W. gratefully acknowledges support from the U.S. Department of Energy, Office of Science, Office of Advanced Scientific Computing Research's Applied Mathematics Competitive Portfolios program under Contract No. AC02-05CH11231.  X.Z. gratefully acknowledges support from the National Science Foundation through grant DMS-2511663.

\bibliography{wwbib}

\bibliographystyle{plain}

\end{document}